\newtheorem{thm}{Theorem}[section]
\newtheorem{cor}[thm]{Corollary}
\newtheorem{lem}[thm]{Lemma}
\newtheorem{defi}[thm]{Definition}
\newtheorem{rmk}[thm]{Remark}
\newcommand{\M}{{\mathcal M}}
\newcommand{\N}{{\mathcal N}}
\numberwithin{equation}{section}
\begin{document}

\title[\emph{}]{Johnson-Schechtman and Khinchine inequalities in noncommutative probability theory}

\author[]{Yong Jiao}
\address{School of Mathematics and Statistics, Central South University, Changsha, 410075, China}

\author[]{Fedor Sukochev}
\address{School of Mathematics and Statistics, University of NSW, Sydney,  2052, Australia}
\email{f.sukochev@unsw.edu.au}

\author[]{Dmitriy  Zanin}
\address{School of Mathematics and Statistics, University of NSW, Sydney,  2052, Australia}
\email{d.zanin@unsw.edu.au}



\thanks{Yong Jiao is supported by NSFC(11471337), Hunan Provincial Natural Science Foundation(14JJ1004) and The International Postdoctoral Exchange Fellowship Program. Fedor Sukochev and Dmitriy Zanin are supported by the Australian Research Council.}
\maketitle

\begin{abstract} We prove disjointification inequalities due to Johnson and Schechtman for noncommutative random variables independent in the sense of Junge and Xu. In the same setting, we also prove noncommutative Khinchine inequalities. These inequalities are proved both for symmetric operator spaces and for modulars.
\end{abstract}

\section{Introduction}

The classical Khinchine inequality asserts that for every $0<p<\infty$ and for every finite sequence $\{\alpha_k\}_{k\geq0}\subset \mathbb C$
$$\|\sum_k\alpha_kr_k\|_p\approx_p \big(\sum_k|\alpha_k|^2\big)^{1/2},$$
where $\{r_k\}_{k\geq0}$ is a Rademacher sequence on a probability space. Here and in what follows, $A\approx_p B$ means that there are constants $C_p>0$ such that $A\leq C_p B$ and $B\leq C_p A.$  In a remarkable paper \cite{R}, Rosenthal generalised the Khinchine inequality by replacing $\{r_k\}_{k\geq0}$ with an arbitrary sequence $\{f_k\}_{k\geq0}\subset L_p(0,1),$ $p>2,$ of independent mean zero random variables. More precisely, Theorem 3 in \cite{R} states that
\begin{equation}\label{R}
\Big\|\sum_{k=0}^nf_k\Big\|_p\approx_{p}\Big(\sum_{k=0}^n\|f_k\|_p^p\Big)^{\frac1p}+\Big(\sum_{k=0}^n\|f_k\|_2^2\Big)^{\frac12}.
\end{equation}
Carothers and Dilworth \cite{CD1} extended the result above to the case of Lorentz spaces $L_{p,q}$, $1\leq p<\infty,\,0<q\leq\infty.$

The real breakthrough was achieved by Johnson and Schechtman \cite{JS} who established a far reaching generalisation of Rosenthal's result for symmetric Banach and quasi-Banach function spaces (see next section for precise definitions of these notions and subsequent terms and symbols). Let $E$ be a symmetric space on $[0,1]$ and  let $Z_E^1,Z_E^2$ be symmetric spaces on $(0,\infty)$ (defined in Subsection \ref{subsect ze}). It is established in \cite{JS} that
\begin{equation}\label{JS}
\Big\|\sum_{k=0}^nf_k\Big\|_E\approx_{E} \Big \|\bigoplus_{k=0}^nf_k\Big\|_{Z_E^2},\mbox{ respectively, }\Big\|\sum_{k=0}^nf_k\Big\|_E\approx_{E} \Big \|\bigoplus_{k=0}^nf_k\Big\|_{Z_E^1}
\end{equation}
for every sequence $\{f_k\}_{k=0}^n$ of independent mean zero (respectively, positive) random variables whenever $L_p\subset E$ for some $p<\infty.$ Here, $\bigoplus _{k=0}^nf_k:=\sum_{k=0}^nf_k(\cdot-k)\chi_{(k,k+1)}$ is a disjoint sum of independent mean zero random variables $\{f_k\}_{k=0}^n$ which is a Lebesgue measurable function on $(0,\infty)$. In the commutative case, the best possible results were achieved in \cite{AS-aop,pacific} by using the so-called Kruglov operator/property introduced in \cite{Br1994,AS2005}  (see detailed exposition of this theme in \cite{AS3}).

Junge, Parcet and Xu \cite{JPX} extended Rosenthal inequality \eqref{JS} to the realm of (noncommutative) free probability theory (we refer the reader to \cite{VDN} and \cite{NS1} for the necessary background) in the case that $E=L_p$ for $1\leq p<\infty,$ and for $p=\infty,$ we refer to Voiculescu \cite{V1}. Recently, a version of free Kruglov operator was constructed in \cite{SZfreeK}. By using a free Kruglov operator, a version of Johnson-Schechtman inequalities in the setting of free probability theory was obtained in \cite{SZfreeK}.

In 2008, Junge and Xu \cite{JX} (see also \cite{Sarym}) introduced the widest possible definition of independence for noncommutative random variables; see Definition \ref{NC defi} in Section 2. Their main result (Theorem 2.1 and Corollary 2.4 in \cite{JX}) reads as follows.

\begin{thm} Let $(\mathcal{M},\tau)$ be a noncommutative probability space. If $x_k\in L_p(\mathcal{M}),$ $k\geq0,$ $2\leq p<\infty,$ are mean zero independent random variables, then
\begin{equation}\label{RJX}
\Big\|\sum_{k=0}^nx_k\Big\|_p\approx_{p}\Big(\sum_{k=0}^n\|x_k\|_p^p\Big)^{\frac1p}+\Big(\sum_{k=0}^n\|x_k\|_2^2\Big)^{\frac12}.
\end{equation}
\end{thm}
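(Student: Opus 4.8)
The plan is to realise the partial sums $S_n=\sum_{k=0}^{n}x_k$ as a noncommutative martingale and to feed it into the noncommutative Burkholder--Rosenthal inequality; the role of independence is then merely to collapse the ``conditioned square function'' terms on the right-hand side into scalars, which reproduces precisely the two quantities appearing in \eqref{RJX}.

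First I would fix the filtration: let $\mathcal M_k$ denote the von Neumann subalgebra of $\mathcal M$ generated by $x_0,\dots,x_k$ (together with the amalgamation algebra $\mathcal N$ of Definition~\ref{NC defi}, which we may take to be $\mathbb C1$), and let $E_k:=E_{\mathcal M_k}$ be the trace-preserving conditional expectation. By the Junge--Xu notion of independence, the von Neumann algebra generated by $x_{k}$ is independent of $\mathcal M_{k-1}$; a standard consequence is that $E_{k-1}$ restricted to that algebra equals $\tau(\,\cdot\,)1$. In particular $E_{k-1}(x_k)=\tau(x_k)1=0$, so $(x_k)_{k\ge0}$ is a martingale difference sequence for the filtration $(\mathcal M_k)_{k\ge0}$ and $S_n$ is the corresponding martingale.

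Next I would invoke the noncommutative Burkholder--Rosenthal inequality of Junge and Xu, valid for $2\le p<\infty$, which applied to this martingale gives
\begin{multline*}
\Big\|\sum_{k=0}^{n}x_k\Big\|_p\approx_p\Big\|\Big(\sum_{k=0}^{n}E_{k-1}(x_k^{*}x_k)\Big)^{1/2}\Big\|_p\\
+\Big\|\Big(\sum_{k=0}^{n}E_{k-1}(x_kx_k^{*})\Big)^{1/2}\Big\|_p+\Big(\sum_{k=0}^{n}\|x_k\|_p^{p}\Big)^{1/p}.
\end{multline*}
Using the independence of $x_k$ from $\mathcal M_{k-1}$ recorded above we have $E_{k-1}(x_k^{*}x_k)=\tau(x_k^{*}x_k)1=\|x_k\|_2^{2}\,1$ and likewise $E_{k-1}(x_kx_k^{*})=\|x_k\|_2^{2}\,1$, so each of the first two terms on the right equals $\big(\sum_{k=0}^{n}\|x_k\|_2^{2}\big)^{1/2}$, since $\tau(1)=1$. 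Substituting back delivers \eqref{RJX}, with the two estimates obtained simultaneously. As an independent check of the lower bound, independence together with $\tau(x_k)=0$ forces $\tau(x_j^{*}x_k)=0$ for $j\ne k$, so that $\big(\sum_k\|x_k\|_2^{2}\big)^{1/2}=\|S_n\|_2\le\|S_n\|_p$ on a probability space, while the bound $\big(\sum_k\|x_k\|_p^{p}\big)^{1/p}\lesssim_p\|S_n\|_p$ is itself part of the Burkholder--Rosenthal inequality.

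The hard part, and the only genuinely non-formal step, is the first one: passing from the factorisation property postulated in the Junge--Xu definition to the assertion that $x_k$ is independent of the \emph{entire} past algebra $\mathcal M_{k-1}$, and hence that $E_{k-1}$ behaves as $\tau(\,\cdot\,)1$ on every polynomial in $x_k$ --- this is exactly what renders the conditioned square functions scalar. One has to verify the appropriate factorisation for alternating words in $\mathcal M_{k-1}$ and in the algebra generated by $x_k$, and, if one wishes to retain the general amalgamated setting $\mathcal N\ne\mathbb C1$, to check that the correct mean-zero hypothesis there is $E_{\mathcal N}(x_k)=0$ rather than $\tau(x_k)=0$ (with $E_{\mathcal N}$ replacing $\tau$ throughout the computation of the conditioned square functions). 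Once this is settled, the remainder is a direct substitution into the already-established noncommutative Burkholder--Rosenthal inequality.
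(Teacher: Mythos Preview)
Your proposal is correct: this is precisely the route taken in Junge--Xu \cite{JX} itself, where the Rosenthal inequality \eqref{RJX} is deduced as Corollary~2.4 from their martingale Burkholder--Rosenthal inequality (their Theorem~2.1), exactly by observing that independence forces $\mathcal{E}_{k-1}(|x_k|^2)=\|x_k\|_2^2\cdot 1$ so that the conditioned square function collapses to a scalar. Your verification of this last point from Definition~\ref{NC defi} is accurate: for $y$ in the past algebra, $\tau(|x_k|^2y)=\tau(|x_k|^2)\tau(y)$ because $|x_k|^2$ lies in the algebra generated by $x_k$ and $y$ lies in the algebra generated by $\{x_j\}_{j\neq k}$.

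The present paper, however, does not give an independent proof of this theorem; it is quoted from \cite{JX}. What the paper \emph{does} prove is the more general Theorem~\ref{js thm}, which for $E=L_p$ with $p\ge 2$ recovers \eqref{RJX} via the identification $Z_{L_p}^2=L_p\cap L_2$. The method there is genuinely different from yours: the key step is Lemma~\ref{2^N lemma}, an induction on $N$ for the exponents $p=2^N$, combining Lust-Piquard's Khinchine inequality with the substitution $y_k=x_k^2-\tau(x_k^2)$ (which are again independent mean-zero elements, now in $L_{2^{N-1}}$). Interpolation and duality then carry the result to arbitrary $E\in{\rm Int}(L_p,L_q)$. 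This avoids the full martingale Burkholder--Rosenthal machinery entirely and is in that sense more elementary and self-contained; conversely, your approach is shorter once one is willing to import Theorem~2.1 of \cite{JX} as a black box, and it makes transparent why the martingale viewpoint underlies the inequality.
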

In the same paper they also treated the case $1<p\leq 2$ (see \cite[Theorem 3.2]{JX}), however the right hand side becomes incomputable. Also the proofs in \cite[Theorem 3.2]{JX} are quite sketchy.

It is very natural to pursue a noncommutative version of the Johnson-Schechtman inequality \eqref{JS} for random variables independent in the sense of Junge-Xu \cite{JX}. This is one of the main aims in the present paper. Our first main result can be stated as follows. Its proof can be found in Section \ref{js section}. We refer to Section 2 for the unexplained notations.

\begin{thm}\label{js thm} Let $E=E(0,1)$ be a symmetric Banach function space and let $(\mathcal{M},\tau)$ be a noncommutative probability space. If $x_k\in E(\mathcal{M}),$ $k\geq0,$ are mean zero independent random variables and
\begin{enumerate}[{\rm (i)}]
\item\label{maina} if $E\in{\rm Int}(L_1,L_q),$ $1\leq q<\infty,$ then
$$\big\|\sum_{k\geq0}x_k\big\|_{E(\M)}\lesssim_E\big\|\sum_{k\geq0}x_k\otimes e_k\big\|_{Z_E^2(\M\bar{\otimes}\ell_\infty)}.$$
\item\label{mainb} if $E\in{\rm Int}(L_p,L_{\infty}),$ $1<p\leq\infty,$ then
$$\big\|\sum_{k\geq0}x_k\big\|_{E(\M)}\gtrsim_E\big\|\sum_{k\geq0}x_k\otimes e_k\big\|_{Z_E^2(\M\bar{\otimes}\ell_\infty)}.$$
\item\label{mainc} if $E\in{\rm Int}(L_p,L_q),$ $1<p\leq q<\infty,$ then
$$\big\|\sum_{k\geq0}x_k\big\|_{E(\M)}\approx_E\big\|\sum_{k\geq0}x_k\otimes e_k\big\|_{Z_E^2(\M\bar{\otimes}\ell_\infty)}.$$
\end{enumerate}
\end{thm}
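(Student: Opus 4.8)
The plan is to reduce the general symmetric space statement to the already-available $L_p$ case \eqref{RJX} by interpolation, exploiting that $Z_E^2$ is built functorially from $E$. First I would set up the relevant linear maps. On the one hand, there is the ``disjointification'' map $T:x_k\mapsto x_k\otimes e_k$, sending $\sum_k x_k$ to $\sum_k x_k\otimes e_k$; the content of parts \eqref{maina}--\eqref{mainc} is precisely that (one or both) inequalities hold for $T$ and its inverse on the appropriate subspaces. The key observation is that by the Junge--Xu theorem above (applied with $p=1$ after a standard reduction, or directly for $p=q$), the map $T$ and its formal inverse $S$ are bounded on the endpoint spaces: for $E=L_q$ ($1\le q<\infty$) one gets $\|\sum_k x_k\|_{L_q(\M)}\lesssim_q \|\sum_k x_k\otimes e_k\|_{Z_{L_q}^2}$ since the right-hand side of \eqref{RJX} is, up to equivalence, exactly $\|\bigoplus_k x_k\|_{Z_{L_q}^2}$ in the noncommutative sense (this identification of the $Z_E^p$ norm with the two-term expression is the bridge to \cite{JS}); and for $E=L_\infty$ the reverse inequality is elementary since the summands have disjoint ``independent'' supports in the GNS sense. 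Thus both one-sided estimates hold at suitable endpoints.

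The main technical step is then a \emph{complemented-subspace / interpolation} argument. I would show that the subspace $\mathcal{X}_E:=\{\sum_k x_k\otimes e_k: x_k\in E(\M)\ \text{mean zero independent}\}$ of $Z_E^2(\M\bar\otimes\ell_\infty)$ is, uniformly in $E$, the image of a bounded projection that is simultaneously bounded on the scale $L_p,\dots,L_q$; equally, the conditional-expectation-type operator $x\mapsto \sum_k \mathcal{E}_k(x)$ sending $Z_E^2$ back to sums of independent mean zero variables must be uniformly bounded on the same scale. Once these projections are in place, the hypothesis $E\in{\rm Int}(L_p,L_q)$ lets me interpolate the endpoint boundedness of $T$ (respectively $S$) between $L_1$/$L_q$ (for \eqref{maina}) or $L_p$/$L_\infty$ (for \eqref{mainb}), and combine both directions for \eqref{mainc}. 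Here I would invoke the standard fact (Calder\'on--Mityagin type, valid for symmetric function spaces that are interpolation spaces of the indicated couple) that an operator bounded on $L_p$ and $L_q$ is automatically bounded on every $E\in{\rm Int}(L_p,L_q)$, with constant controlled by the interpolation constant of $E$; the subtlety is that this must be applied on the \emph{pair of amplified spaces} $E(\M)$ and $Z_E^2(\M\bar\otimes\ell_\infty)$ simultaneously, which is why I need the functoriality $Z_{(\cdot)}^2$ to commute with interpolation, i.e. $Z_E^2\in{\rm Int}(Z_{L_p}^2,Z_{L_q}^2)$ with uniform constants --- a lemma I expect to have been established in Section 2.

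The hard part will be the boundedness of the relevant projection/averaging operator \emph{uniformly down to the $L_1$ endpoint}, which is exactly the delicate regime ($1<p\le 2$) that \cite{JX} handled only with an ``incomputable'' right-hand side; this is why part \eqref{maina} asserts only one inequality for $1\le q$ and part \eqref{mainb} only the other for $p>1$, the two-sided estimate \eqref{mainc} requiring the strictly-between condition $1<p\le q<\infty$. Concretely, the reverse inequality in \eqref{maina} would require controlling $\|\sum_k x_k\otimes e_k\|_{Z_E^2}$ by $\|\sum_k x_k\|_{E(\M)}$ at the $L_1$ endpoint, where no good projection exists --- hence it is omitted. So the proof structure is: (1) identify the noncommutative $Z_E^2$-norm of a disjoint sum with the Junge--Xu two-term functional when $E=L_p$; (2) deduce the two endpoint one-sided inequalities from \eqref{RJX} and from the trivial $L_\infty$ estimate; (3) establish uniform boundedness of the disjointification map and its inverse on the finite-dimensional-free subspaces along the scale $(L_p,L_q)$; (4) interpolate, using $Z_E^2\in{\rm Int}(Z_{L_p}^2,Z_{L_q}^2)$, to transfer each one-sided inequality to general $E$, and intersect the ranges of validity to get \eqref{maina}, \eqref{mainb}, \eqref{mainc}. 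I expect step (3)--(4) --- the uniform interpolation of the \emph{operator} (not just the spaces) past the noncommutative amplification --- to be where the real work lies.
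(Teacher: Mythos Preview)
Your architecture---define linear maps, establish endpoint bounds, interpolate---matches the paper's, but the implementation diverges at both the endpoint step and the interpolation step, and the latter contains a real gap.

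\textbf{Endpoints.} The paper does \emph{not} quote \eqref{RJX}. It defines the operator
\[
T\Big(\sum_{k\geq0}z_k\otimes e_k\Big)=\sum_{k\geq0}\mathcal{E}_k(z_k)-\tau(z_k)
\]
(your ``$S$'' with the conditional expectation built in) and proves directly that $T:(L_1+L_2)(\M\bar\otimes\ell_\infty)\to L_1(\M)$ and $T:(L_{2^N}\cap L_2)(\M\bar\otimes\ell_\infty)\to L_{2^N}(\M)$ are bounded. The $L_1$ endpoint is handled by splitting $z=a+b$ and using orthogonality in $L_2$; the $L_{2^N}$ endpoint is Lemma~\ref{2^N lemma}, proved by induction on $N$ via a Rademacher reduction and Lust-Piquard's Khinchine inequality---not by invoking Junge--Xu. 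Part~\eqref{mainb} is then obtained not from a direct $L_\infty$ estimate but by \emph{duality}: the adjoint $T^*$ satisfies $T^*:L_\infty(\M)\to(L_\infty\cap L_2)(\M\bar\otimes\ell_\infty)$ and $T^*:L_{2^N/(2^N-1)}(\M)\to(L_{2^N/(2^N-1)}+L_2)(\M\bar\otimes\ell_\infty)$. Your ``disjoint supports in the GNS sense'' remark is not the mechanism.

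\textbf{Interpolation.} Here is the gap. You assume a lemma of the form $Z_E^2\in{\rm Int}(Z_{L_p}^2,Z_{L_q}^2)$ whenever $E\in{\rm Int}(L_p,L_q)$, saying you ``expect [it] to have been established in Section~2''. It is not established anywhere in the paper, and the paper deliberately avoids it. Instead, the paper proves tailor-made interpolation lemmas (Lemmas~\ref{first interpolation lemma} and~\ref{second interpolation lemma}) whose proofs proceed by choosing a projection ${\bf q}$ in the semifinite algebra $\mathcal{N}$ with $\nu({\bf q})=1$ capturing the ``head'' $\mu(x)\chi_{(0,1)}$; on the finite corner ${\bf q}\mathcal{N}{\bf q}$ the spaces $(L_1+L_2)$ and $(L_q\cap L_2)$ collapse to $L_1$ and $L_q$, so one interpolates \emph{only on the corner} (where the target is genuinely $E$), and the ``tail'' piece is estimated directly in $(L_q\cap L_2)$ and absorbed into the $\|\cdot\|_{L_1+L_2}$ part of the $Z_E^2$ norm. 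This corner trick is the substitute for the functorial interpolation of $Z_{(\cdot)}^2$ that you are postulating; without it, your step~(4) is unjustified.
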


At the time of writing, it is not clear to the authors whether the condition on $E$ in \eqref{mainc} is sharp.

Similarly to the second equality in \eqref{JS}, when $\{x_k\}_{k\geq0}$ is a sequence of independent positive random variables, we have the following assertion.

\begin{cor}\label{js positive} Let $E=E(0,1)$ be a symmetric Banach function space and let $(\mathcal{M},\tau)$ be a noncommutative probability space. Let $x_k\in E(\mathcal{M}),$ $k\geq0,$ be positive independent random variables.
\begin{enumerate}[{\rm (i)}]
\item\label{corb} If $E$ is an arbitrary symmetric space, then
$$\big\|\sum_{k\geq0}x_k\big\|_{E(\M)}\gtrsim_E\big\|\sum_{k\geq0}x_k\otimes e_k\big\|_{Z_E^1(\M\bar{\otimes}\ell_\infty)}.$$
\item\label{cora} If $E\in{\rm Int}(L_1,L_q),$ $1\leq q<\infty,$ then
$$\big\|\sum_{k\geq0}x_k\big\|_{E(\M)}\lesssim_E\big\|\sum_{k\geq0}x_k\otimes e_k\big\|_{Z_E^1(\M\bar{\otimes}\ell_\infty)}.$$
\item\label{corc} If $E\in{\rm Int}(L_1,L_q),$ $1\leq q<\infty,$ then
$$\big\|\sum_{k\geq0}x_k\big\|_{E(\M)}\approx_E\big\|\sum_{k\geq0}x_k\otimes e_k\big\|_{Z_E^1(\M\bar{\otimes}\ell_\infty)}.$$
\end{enumerate}
\end{cor}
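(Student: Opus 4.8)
The plan is to deduce Corollary \ref{js positive} from Theorem \ref{js thm} by the standard device of passing from positive random variables to mean zero random variables via recentering, while keeping careful track of how the spaces $Z_E^1$ and $Z_E^2$ interact with this operation. Write $x_k = (x_k - \tau(x_k)\mathbf 1) + \tau(x_k)\mathbf 1$, so that $y_k := x_k - \tau(x_k)\mathbf 1$ are mean zero and independent (independence is preserved under adding scalars, and each $y_k$ lies in $E(\M)$ since constants belong to $E(\M)$ on a probability space). The key point is that on the positive cone the $Z_E^1$ and $Z_E^2$ norms of the disjoint sum are equivalent up to the $\ell_1$- versus $\ell_2$-contribution of the means $\{\tau(x_k)\}$, and one shows that for positive $x_k$ this discrepancy is absorbed: indeed $\big\|\sum_k x_k\big\|_{E(\M)} \geq \sum_k \tau(x_k)$ trivially (by positivity and $\tau(\mathbf 1)=1$, applying the state to the sum), which controls the $\ell_1$-term, and simultaneously $\big\|\sum_k x_k\otimes e_k\big\|_{Z_E^1}$ dominates $\big\|\sum_k y_k\otimes e_k\big\|_{Z_E^1}$ after recentering.

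The steps, in order, would be as follows. \textbf{Step 1.} Establish part \eqref{corb} directly without recentering: for positive $x_k$ the lower bound $\big\|\sum_k x_k\big\|_{E(\M)}\gtrsim_E \big\|\sum_k x_k\otimes e_k\big\|_{Z_E^1}$ should follow from the definition of $Z_E^1$ (the ``$\ell_1$-type'' disjoint sum space) together with a comparison between the distribution of $\sum_k x_k$ and of the disjoint sum $\bigoplus_k x_k$, using that for positive independent summands $\sum_k x_k$ stochastically dominates each truncation; this is the noncommutative analogue of the elementary half of Johnson--Schechtman and needs only positivity, hence holds for arbitrary symmetric $E$. \textbf{Step 2.} For part \eqref{cora}, apply Theorem \ref{js thm}\eqref{maina} to the mean zero family $\{y_k\}$ to get $\big\|\sum_k y_k\big\|_{E(\M)}\lesssim_E \big\|\sum_k y_k\otimes e_k\big\|_{Z_E^2}$. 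Then bound $\big\|\sum_k x_k\big\|_{E(\M)}\leq \big\|\sum_k y_k\big\|_{E(\M)} + \big(\sum_k \tau(x_k)\big)\|\mathbf 1\|_{E}$, and show the right-hand side is $\lesssim_E \big\|\sum_k x_k\otimes e_k\big\|_{Z_E^1}$: the first term because $\big\|\sum_k y_k\otimes e_k\big\|_{Z_E^2}\lesssim \big\|\sum_k x_k\otimes e_k\big\|_{Z_E^1}$ on the positive cone (a $Z_E^2 \subseteq Z_E^1$-type estimate combined with the fact that recentering only decreases the relevant norm when the original variables are positive), and the second term because $\sum_k\tau(x_k)\leq \big\|\sum_k x_k\otimes e_k\big\|_{Z_E^1}$, which is immediate from the structure of $Z_E^1$ as it contains an $L_1$-averaging component. \textbf{Step 3.} Part \eqref{corc} is just the conjunction of \eqref{corb} and \eqref{cora}.

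I expect the main obstacle to be Step 2, specifically the inequality $\big\|\sum_k y_k\otimes e_k\big\|_{Z_E^2(\M\bar\otimes\ell_\infty)}\lesssim_E \big\|\sum_k x_k\otimes e_k\big\|_{Z_E^1(\M\bar\otimes\ell_\infty)}$ relating the two disjoint-sum spaces after recentering. This requires understanding precisely how the ``diagonal'' ($L_2$-type) part of $Z_E^2$ is dominated by the ``tail'' ($L_1$-type) part of $Z_E^1$ when the summands are positive, and that the subtraction of the means does not inflate the $Z_E^2$-norm beyond a constant multiple of the original $Z_E^1$-norm --- this uses positivity in an essential way, since for signed variables no such domination holds (which is exactly why \eqref{JS} has the two different statements). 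The relevant facts about $Z_E^1$ and $Z_E^2$ — that $Z_E^2\hookrightarrow Z_E^1$ with the embedding controlled on positive elements, and that $\|{\cdot}\|_{Z_E^1}$ dominates the $\ell_1$-sum of the ``masses'' $\tau(x_k)$ — should be recorded as a separate lemma in Subsection \ref{subsect ze} and then invoked; once that lemma is in hand, the corollary follows by the triangle inequality and the three applications of Theorem \ref{js thm} described above. One should also double-check the harmless point that $\{y_k\}$ remains independent in the sense of Definition \ref{NC defi}, but this is immediate since the defining von Neumann subalgebras are unchanged by translation by scalars.
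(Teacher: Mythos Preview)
Your approach to part \eqref{cora} matches the paper's: recentre, apply Theorem \ref{js thm}\eqref{maina} to $y_k=x_k-\tau(x_k)$, and absorb the $\ell_1$-term $\sum_k\tau(x_k)$ into $\|X\|_{Z_E^1}$. However, you have the difficulty backwards. The inequality $\|\sum_k y_k\otimes e_k\|_{Z_E^2}\lesssim\|\sum_k x_k\otimes e_k\|_{Z_E^1}$ you flag as the main obstacle is in fact trivial: by the triangle inequality and Hardy--Littlewood submajorization ($\sum_k\tau(x_k)\otimes e_k\prec\prec\sum_k x_k\otimes e_k$, since $x\mapsto\tau(x)\mathbf 1$ is a conditional expectation), one has $\|\sum_k(x_k-\tau(x_k))\otimes e_k\|_{Z_E^2}\leq 2\|\sum_k x_k\otimes e_k\|_{Z_E^2}$, and then $\|\cdot\|_{Z_E^2}\leq\|\cdot\|_{L_1+L_2}+\|\mu(\cdot)\chi_{(0,1)}\|_E\leq\|\cdot\|_{Z_E^1}$ because $\|\cdot\|_{L_1+L_2}\leq\|\cdot\|_{L_1}$. (Your set inclusion is reversed: as sets $Z_E^1\subset Z_E^2$, though the norm inequality goes the way you need.) No separate lemma is required, and recentering does not ``decrease'' the norm --- it costs a factor $2$.

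The substantive step is part \eqref{corb}, where your Step 1 is too vague to constitute a proof. Saying the sum ``stochastically dominates each truncation'' does not yield a comparison of $\mu(X)\chi_{(0,1)}$ with $\sum_k x_k$ in an arbitrary symmetric norm. The paper proceeds as follows: set $x_{1k}=x_k\,e_{(\mu(1,X),\infty)}(x_k)$, so that $\mu(X)\chi_{(0,1)}\leq\mu\big(\sum_k x_{1k}\otimes e_k\big)+\mu(1,X)\chi_{(0,1)}$; then invoke the \emph{uniform} submajorization $\sum_k x_{1k}\otimes e_k\vartriangleleft\sum_k x_{1k}$ from \cite[Lemma 36]{SZfreeK}, which by \cite[Corollary 3.4.3]{LSZ} yields $\|\sum_k x_{1k}\otimes e_k\|_E\leq\|\sum_k x_{1k}\|_E\leq\|\sum_k x_k\|_E$; finally $\mu(1,X)\leq\|X\|_1=\|\sum_k x_k\|_{L_1(\M)}\leq\|\sum_k x_k\|_{E(\M)}$. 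Ordinary Hardy--Littlewood submajorization would not suffice here for an arbitrary symmetric Banach space $E$; the uniform version is precisely what delivers the norm inequality without any interpolation hypothesis on $E$. Note also that this argument uses only positivity, not independence --- a fact the paper explicitly remarks on --- so your invocation of independence in Step 1 is superfluous.
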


Our second main result establishes Khinchine inequalities for noncommutative independent random variables. It significantly extends/strengthens a number of previously known results. We mention \cite[Theorem 3.1]{J-BG} and \cite[Corollary 4.18]{DPPS} as well as purely commutative Khinchine inequality \cite[Theorem 22]{pacific}, which are augmented by Theorem \ref{khinchine} below. Its proof can be found in Section \ref{kh section}.

\begin{thm}\label{khinchine} Let $E=E(0,1)$ be a symmetric Banach function space and let $(\mathcal{M},\tau)$ be a noncommutative probability space. Let $x_k\in E(\mathcal{M}),$ $k\geq0,$ be mean zero independent random variables. If $E\in{\rm Int}(L_p,L_q),$ $1<p\leq q<\infty,$ then
$$\big\|\sum_{k\geq0}x_k\big\|_{E(\M)}\approx_E\Big\|\Big(\sum_{k\geq0}x_k^2\Big)^{\frac12}\Big\|_{E(\M)}.$$
\end{thm}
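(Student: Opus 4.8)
The plan is to deduce the asserted equivalence from the disjointification inequalities of Theorem~\ref{js thm} and Corollary~\ref{js positive} by passing, on the square-function side, to the $2$-concavification of $E$ and a sum of positive independent random variables. First I would apply part~\ref{mainc} of Theorem~\ref{js thm} — its hypothesis on $E$ being exactly that of the present theorem — to get
$$\Big\|\sum_{k\ge0}x_k\Big\|_{E(\M)}\ \approx_E\ \Big\|\sum_{k\ge0}x_k\otimes e_k\Big\|_{Z_E^2(\N)},\qquad \N:=\M\bar{\otimes}\ell_\infty ,$$
reducing the theorem to the ``disjoint'' equivalence
$$\Big\|\sum_{k\ge0}x_k\otimes e_k\Big\|_{Z_E^2(\N)}\ \approx_E\ \Big\|\Big(\sum_{k\ge0}x_k^2\Big)^{1/2}\Big\|_{E(\M)},$$
where the $x_k$ are self-adjoint. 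Put $a:=\sum_k x_k\otimes e_k\in\N$; since the $e_k$ are pairwise orthogonal projections, $a=a^*$, $|a|=\sum_k|x_k|\otimes e_k$ and $a^2=\sum_k x_k^2\otimes e_k$ is precisely the disjoint sum of the positive random variables $x_k^2$ (still independent, being functions of the $x_k$), while $\sum_k x_k^2\in\M$ is their ordinary sum.

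Next I would introduce the $2$-concavification $G$ of $E$, the symmetric (quasi-)Banach function space with $\|h\|_G:=\big\|\,|h|^{1/2}\big\|_E^2$, for which $\|z\|_{E(\M)}^2=\|z^2\|_{G(\M)}$ for positive $z$ (and the same over $\N$), $G\in{\rm Int}(L_{p/2},L_{q/2})$, and $L_{q/2}\subseteq G$. Using $\mu(h^{1/2})=\mu(h)^{1/2}$ together with $s^2+t^2\le(s+t)^2\le 2(s^2+t^2)$, a short computation with singular value functions yields $\|h\|_{Z_G^1(\N)}\approx\big\|h^{1/2}\big\|_{Z_E^2(\N)}^2$ for disjointly supported $h\ge0$; in particular
$$\Big\|\sum_k x_k\otimes e_k\Big\|_{Z_E^2(\N)}^2\ \approx\ \Big\|\sum_k x_k^2\otimes e_k\Big\|_{Z_G^1(\N)},\qquad \Big\|\Big(\sum_k x_k^2\Big)^{1/2}\Big\|_{E(\M)}^2=\Big\|\sum_k x_k^2\Big\|_{G(\M)}.$$
Thus the ``disjoint'' equivalence above is equivalent to $\big\|\sum_k x_k^2\big\|_{G(\M)}\approx_G\big\|\sum_k x_k^2\otimes e_k\big\|_{Z_G^1(\N)}$, which is precisely the positive disjointification equivalence of Corollary~\ref{js positive} applied to the space $G$ and the positive independent random variables $x_k^2$. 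When $p\ge2$ this finishes the proof at once: then $p/2\ge1$, so $G\in{\rm Int}(L_1,L_{q/2})$ with $q/2<\infty$, and part~\ref{corc} of Corollary~\ref{js positive} applies verbatim.

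The hard part will be the range $1<p<2$, where $p/2<1$, so $G$ is genuinely quasi-Banach, $G\not\subseteq L_1$, and $G\notin{\rm Int}(L_1,L_q)$, and part~\ref{corc} of Corollary~\ref{js positive} is not directly available. One of the two inequalities still comes for free: part~\ref{corb} of Corollary~\ref{js positive}, valid for an arbitrary symmetric space, gives $\big\|\sum_k x_k^2\big\|_{G(\M)}\gtrsim\big\|\sum_k x_k^2\otimes e_k\big\|_{Z_G^1(\N)}$, hence $\big\|\sum_{k\ge0}x_k\big\|_{E(\M)}\lesssim_E\big\|(\sum_{k\ge0}x_k^2)^{1/2}\big\|_{E(\M)}$ throughout $1<p\le q<\infty$. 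For the reverse inequality when $1<p<2$ I would invoke the quasi-Banach analogue of part~\ref{cora} of Corollary~\ref{js positive} for $G$ — legitimate because $G$ still satisfies the integrability condition $L_{q/2}\subseteq G$ underlying the Johnson--Schechtman machinery — which is the single point where the argument leans on disjointification beyond its Banach statement. Isolating and justifying this quasi-Banach version is, I expect, the main technical obstacle to obtaining the theorem in the full stated range.
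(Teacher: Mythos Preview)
Your route via the $2$-concavification $G$ is genuinely different from the paper's and is clean when $p\ge 2$, but it has a real gap for $1<p<2$ that goes beyond the obstacle you identify.

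The ``short computation'' $\|h\|_{Z_G^1(\N)}\approx\|h^{1/2}\|_{Z_E^2(\N)}^2$ is false in that range. By the paper's definition, $\|h\|_{Z_G^1}=\|\mu(h)\chi_{(0,1)}\|_G+\|h\|_1$, and for $h=a^2$ the second term is $\|a\|_2^2$, whereas $\|a\|_{Z_E^2}^2$ only controls $\|a\|_{L_1+L_2}^2$. Take $E=L_p$ with $1<p<2$ and a single $x_0$ with $\mu(x_0)=n\chi_{[0,1/n]}$; then $a=x_0\otimes e_0$ gives $\|a\|_{Z_E^2}^2\approx n^{2-2/p}$ while $\|a^2\|_{Z_G^1}\approx n$, so $\|a^2\|_{Z_G^1}\lesssim\|a\|_{Z_E^2}^2$ fails. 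This breaks precisely the chain you need for $\|(\sum x_k^2)^{1/2}\|_E\lesssim\|\sum x_k\|_E$. For the other direction you invoke part~\eqref{corb} of Corollary~\ref{js positive} ``valid for an arbitrary symmetric space'', but the corollary is stated for symmetric \emph{Banach} spaces; the paper explicitly notes that the quasi-Banach case needs monotonicity under Hardy--Littlewood submajorization, and moreover its proof uses $\|X\|_1\le\|\sum_k x_k\|_E$, which requires $E\subset L_1$ --- this fails for $G$ when $p<2$ (e.g.\ $G=L_{p/2}$). So neither direction is secured by your scheme once $p<2$.

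The paper sidesteps concavification entirely. It builds a linear ``column'' operator $S:z\mapsto\sum_k(\mathcal{E}_k z-\tau(z))\otimes e_{k,0}$ into $\M\bar\otimes\mathcal{L}(\ell_2)$, so that $|S(\sum_k x_k)|=(\sum_k x_k^2)^{1/2}\otimes e_{0,0}$, and proves $S$ and $S^*$ are bounded on both $L_p$ and $L_q$. The only place Corollary~\ref{js positive} enters is to bound $S$ on $L_q$, where the squares live in $L_{q/2}$ --- a genuine Banach space since one may assume $q\ge 2$ --- so no quasi-Banach analogue is ever needed. Interpolation Lemmas~\ref{third interpolation lemma} and~\ref{fourth interpolation lemma} then transfer the $L_p$/$L_q$ bounds to $E$. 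This buys a uniform argument across the whole range $1<p\le q<\infty$, whereas your approach, even if patched, would require a separate and substantially harder treatment of $1<p<2$.
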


As a note of caution to the reader, we would like to emphasise that the noncommutative Khinchine inequality stated below is substantially different from vector-valued versions of that inequality studied in \cite{LP,LPP}. In fact, the proof of Theorem \ref{khinchine} does not depend on the self-adjointness of $x_k.$ Verbatim repetition of the argument from Section \ref{kh section} yields the following assertion.

\begin{rmk} Let $\{x_k\}_{k\geq0}\subset E(\M)$ be a sequence of not necessarily self-adjoint operators. If $\{x_k\}_{k\geq0}$ are independent and mean zero, then we have
$$\|\sum_{k\geq0}x_k\|_{E(\mathcal{M})}\approx_E\|(\sum_{k\geq0}|x_k|^2)^{\frac12}\|_{E(\mathcal{M})}\approx_E\|(\sum_{k\geq0}|x_k^*|^2)^{\frac12}\|_{E(\mathcal{M})}.$$
Clearly, the last equality fails\footnote{Let $(\M,\tau)=(M_n(\mathbb{C}),\frac1n{\rm Tr}).$ Set $x_k=e_{k,0},$ $0\leq k<n.$ It is immediate that
$$\Big\|\Big(\sum_{k=0}^{n-1}|x_k|^2\Big)^{\frac12}\Big\|_{L_p(\mathcal{M})}=n^{1/2-1/p},\quad \Big\|\Big(\sum_{k=0}^{n-1}|x_k^*|^2\Big)^{\frac12}\Big\|_{L_p(\mathcal{M})}=1.$$
Of course, the sequence $\{x_k\}_{k\geq0}$ is not independent.} when the independence assumption is omitted.
\end{rmk}

Note that the assertions of Theorems \ref{js thm} and \ref{khinchine} hold also for arbitrary quasi-Banach symmetric spaces. Proofs require only minor adjustments. The assertions  of Corollary \ref{js positive} \eqref{cora},\eqref{corc} also hold for arbitrary quasi-Banach symmetric spaces. However, the assertion of Corollary \ref{js positive} \eqref{corb} requires additional restrictions on a quasi-Banach symmetric space $E$ (e.g. it suffices that the quasi-norm $\|\cdot\|_E$ is monotone with respect to Hardy-Littlewood submajorization).

We also prove $\Phi$-moment analogues of Johnson-Schechtman and Khinchine inequalities for noncommutative independent random variables. Let $\Phi$ be an Orlicz function on $[0,\infty)$. The study of $\Phi$-moment inequalities was initiated by Kruglov \cite{K} for classical independent random variables and by Burkholder-Gundy \cite{BG} for martingales. In recent years, $\Phi$-moment inequalities have been extended to noncommutative martingales \cite{BC,BCO,JSZZ} and freely independent random variables \cite{JSXZ}. In particular, in \cite{JSXZ}, the substantial difference\footnote{\cite[Theorem 1]{SZfreeK} shows that Johnson-Schechtman inequalities for freely independent random variables hold e.g. for every Orlicz space. \cite[Theorem 15]{JSXZ} establishes Johnson-Schechtman inequalities for freely independent random variables in the setting of modulars with $\Delta_2$-condition. \cite[Theorem 17]{JSXZ} demonstrates the necessity of $\Delta_2$-condition.} between Johnson-Schechtman inequalities and their $\Phi$-moment analogues has been demonstrated in the case of independent and freely independent random variables. Very recently, $\Phi$-moment analogues of the noncommutative Burkholder/Rosenthal inequalities are obtained in \cite{NW}. In what follows, we continue this line of research by providing a noncommutative $\Phi$-moment analogues of Theorem \ref{js thm} and Theorem \ref{khinchine}. The proof of the following theorem can be found in Section \ref{modular section}.

\begin{thm}\label{modular thm} Let $\Phi$ be an Orlicz function which is $p$-convex and $q$-concave, $1<p\leq q<\infty,$ and let $(\mathcal{M},\tau)$ be a noncommutative probability space. Let  $x_k\in L_{\Phi}(\M),$ $k\geq0,$ be independent random variables
\begin{enumerate}[{\rm (i)}]
\item\label{modpos} if $x_k,$ $k\geq0,$ are positive, then
$$\tau\Big(\Phi(|\sum_{k\geq0}x_k|)\Big)\approx_{\Phi}\mathbb E\Big(\Phi(\mu(X)\chi_{(0,1)})\Big)+\Phi(\|X\|_1),\quad X=\sum_{k\geq0}x_k\otimes e_k.$$
\item\label{modsym} if $x_k,$ $k\geq0,$ are mean zero, then
$$\tau\Big(\Phi(|\sum_{k\geq0}x_k|)\Big)\approx_{\Phi}\mathbb E\Big(\Phi(\mu(X)\chi_{(0,1)})\Big)+\Phi(\|X\|_{L_1+L_2}),\quad X=\sum_{k\geq0}x_k\otimes e_k.$$
\item\label{modkh} if $x_k,$ $k\geq0,$ are mean zero, then
$$\tau\Big(\Phi(|\sum_{k\geq0}x_k|)\Big)\approx_{\Phi}\tau\Big(\Phi\Big(\big(\sum_{k\geq0}x_k^2\big)^{\frac12}\Big)\Big).$$
\end{enumerate}
\end{thm}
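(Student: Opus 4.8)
The plan is to deduce the $\Phi$-moment statements of Theorem \ref{modular thm} from the symmetric-space versions in Theorem \ref{js thm}, Corollary \ref{js positive} and Theorem \ref{khinchine} by a standard transfer argument between modulars and norms. Recall that an Orlicz function $\Phi$ which is $p$-convex and $q$-concave with $1<p\le q<\infty$ generates an Orlicz space $L_\Phi$ that is an interpolation space for the couple $(L_p,L_q)$; thus $L_\Phi\in{\rm Int}(L_p,L_q)$ and all three parts of Theorem \ref{js thm} apply with $E=L_\Phi$, as does Theorem \ref{khinchine}. The key bridge is the elementary identity $\tau(\Phi(|y|))=\int_0^\infty\Phi(\mu(t;y))\,dt$ for $y$ in a semifinite von Neumann algebra, together with the fact that on each side of the claimed equivalences the quantity is, up to $\Phi$-dependent constants, a fixed function of the singular value function (respectively of the relevant norms). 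So the first step is to record: for positive $x_k$, $\mathbb E(\Phi(\mu(X)\chi_{(0,1)}))+\Phi(\|X\|_1)\approx_\Phi \tau_{Z}(\Phi(|X|_{Z^1}))$ where the right side is the modular on the symmetric space $Z^1_{L_\Phi}$ evaluated via the appropriate concatenation of $L_\Phi$-type norms; and analogously for the mean-zero case with $L_1+L_2$ in place of $L_1$ and $Z^2$ in place of $Z^1$. This is just unwinding the definition of $Z_E^j$ from Subsection \ref{subsect ze} in the special case $E=L_\Phi$, using that the Orlicz modular of a direct sum splits additively over the summands.

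Second, I would prove the modular inequalities in the direction that the norm inequalities give for free, using the numerical inequality available from $p$-convexity/$q$-concavity of $\Phi$. Concretely, if $A\le C\, B$ is a norm inequality of the form $\|y\|_{L_\Phi}\le C\|z\|_{L_\Phi}$, then applying it to dilations $\delta_\lambda y$ and optimising in $\lambda$ together with the two-sided bound $\Phi(ab)\le$ (something like) $a^p\Phi(b)$ for $a\ge1$ and $\le a^q\Phi(b)$ for $a\le1$ yields $\tau(\Phi(|y|))\le \Phi$-constant $\cdot\,\tau(\Phi(|z|/c))$ for a suitable absolute $c$; since $L_\Phi$ satisfies $\Delta_2$ (forced by $q$-concavity), one absorbs the $1/c$ at the cost of another $\Phi$-dependent constant. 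Thus each of the two-sided norm equivalences in Theorem \ref{js thm}\eqref{mainc}, Corollary \ref{js positive}\eqref{corc} and Theorem \ref{khinchine}, specialised to $E=L_\Phi$, upgrades to a two-sided modular equivalence; combined with the additivity of the Orlicz modular over the disjoint sum $X=\sum_k x_k\otimes e_k$ this gives exactly \eqref{modpos}, \eqref{modsym}, \eqref{modkh} after identifying the right-hand sides as in the first step. One must also check the mean-zero hypotheses are preserved and that independence in the sense of Junge–Xu is inherited by the truncations/dilations used — both are routine since dilation acts only on the scalar factor.

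The step I expect to be the genuine obstacle is the passage from the norm inequality to the modular inequality when the norm inequality is \emph{not} two-sided, i.e.\ deducing the full strength of \eqref{modpos} and \eqref{modsym} rather than just the $1<p\le q<\infty$ case, and more importantly making the transfer quantitatively correct. The dilation trick gives control of $\tau(\Phi(|y|))$ by $\tau(\Phi(|z|))$ only after choosing $\lambda$ to balance the two ``regimes'' of $\Phi$ at scale $\|z\|_\Phi$, and the constants that appear genuinely depend on the $p$-convexity and $q$-concavity constants of $\Phi$, not merely on $p,q$; one has to be careful that the balancing is uniform in the operator $z$ (equivalently, uniform over all rearrangements), which is where $\Delta_2$ and the equivalence $L_\Phi\in{\rm Int}(L_p,L_q)$ are really used. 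A clean way to organise this is to phrase a general lemma: for $\Phi$ $p$-convex and $q$-concave and for $E=L_\Phi$, one has $\tau(\Phi(|y|))\approx_\Phi \tau(\Phi(|z|))$ whenever $\|y\|_E\approx_E\|y'\|_E$ for all $y'$ with $\mu(y')=\mu(z)$-type data — then apply it termwise. With that lemma in hand, parts \eqref{modpos}–\eqref{modkh} follow by citing the corresponding symmetric-space result with $E=L_\Phi$ and reading off the right-hand side via the definition of $Z_{L_\Phi}^1$ and $Z_{L_\Phi}^2$; the positivity case additionally invokes Corollary \ref{js positive}\eqref{corc}, whose hypothesis $L_\Phi\in{\rm Int}(L_1,L_q)$ is automatic here.
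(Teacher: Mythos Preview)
Your central transfer step does not work. A norm inequality $\|y\|_{L_\Phi}\le C\|z\|_{L_\Phi}$ for a single pair $(y,z)$ cannot yield $\tau(\Phi(|y|))\lesssim_\Phi\tau(\Phi(|z|))$, and the ``dilation trick'' does not help: the norm inequality is homogeneous of degree one, so replacing $(y,z)$ by $(\lambda y,\lambda z)$ returns exactly the same inequality and no new information to optimise over. Quantitatively, if $\|y\|_{L_\Phi}=\|z\|_{L_\Phi}=\lambda\ge1$ then $p$-convexity and $q$-concavity only pin down $\tau(\Phi(|y|)),\tau(\Phi(|z|))$ to the range $[\lambda^p,\lambda^q]$ up to constants, so their ratio can be of order $\lambda^{q-p}$, which is unbounded as the data $\{x_k\}$ varies. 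The ``general lemma'' you sketch at the end is either this false statement or is not well-posed. There is a second, related problem with your first step: the expression $\mathbb E(\Phi(\mu(X)\chi_{(0,1)}))+\Phi(\|X\|_1)$ is not the Orlicz modular of $X$ in any symmetric space, so there is no ``modular on $Z^1_{L_\Phi}$'' to split additively; in particular $\Phi(\|X\|_1)$ is a nonlinear function of a norm and does not arise by integrating $\Phi$ against a rearrangement, so ``unwinding the definition of $Z_E^j$'' produces norms, not the right-hand sides you need.

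What is actually required---and what the paper does---is to bypass the $L_\Phi$-norm entirely and return to the \emph{linear operators} $T,T^*,S,S^*$ built in the proofs of Theorems \ref{js thm} and \ref{khinchine}. Those operators are bounded between the two endpoint spaces ($L_p$ and $L_q$, or the appropriate $Z$-type endpoints), and the paper proves dedicated $\Phi$-moment interpolation lemmas (Lemmas \ref{alpha interpolation lemma}, \ref{beta interpolation lemma}, \ref{fifth interpolation lemma}, \ref{sixth interpolation lemma}) asserting that a linear map bounded on both endpoints satisfies $\tau_2(\Phi(|Wx|))\lesssim_\Phi\tau_1(\Phi(|x|))$. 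The crucial point is that $\Phi$-moment interpolation needs boundedness on \emph{two} endpoint $L_r$-spaces, not merely on the single space $L_\Phi$; this is exactly the information your proposal throws away by first specialising $E=L_\Phi$ and then trying to recover the modular from the norm. With those lemmas applied to $T,T^*$ one gets \eqref{modsym} (the mixed right-hand side $\int_0^1\Phi(\mu(t,X))dt+\Phi(\|X\|_{L_1+L_2})$ is produced directly by the shape of Lemmas \ref{fifth interpolation lemma} and \ref{sixth interpolation lemma}), \eqref{modpos} is then deduced from \eqref{modsym} by centering, and \eqref{modkh} follows by applying Lemma \ref{alpha interpolation lemma} to $S$ and $S^*$.
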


We remark that Theorem \ref{modular thm} (\ref{modpos}) and (\ref{modsym}) improve/strengthen \cite[Corollary 5.2 and Theorem 5.6]{NW} in the case that conditional expectation used in \cite{NW} is a trace, and Theorem \ref{modular thm} (\ref{modkh}) strengthens \cite[Theorem 5.1]{BC}. For example, consider the Orlicz function $\Phi(t)=t^p\log(1+t^q)$ with $p>1$ and $q>0.$ It is easy to check that $\Phi$ is $p$-convex and $(p+q)$-concave. If, for example, $p=2$ or $p+q=2,$ then the results from \cite{NW} and \cite{BC} are not applicable, whereas Theorem \ref{modular thm} still holds.

\section{Preliminaries}\label{sect prel}

\subsection{Noncommutative symmetric spaces}
Let $\mathcal{M}$ be a finite (or semifinite) von Neumann algebra equipped with a faithful normal finite (or semifinite) trace $\tau$. We denote by $L_0(\mathcal{M},\tau)$, or simply $L_0(\mathcal{M})$, the family of all $\tau$-measurable operators \cite{FK}. Recall that $e_{(s,\infty)}(|x|)$ is the spectral projection of $x\in L_0(\mathcal{M})$ associated with the interval $(s,\infty)$. For $x\in L_0(\mathcal{M})$, the generalized singular value function is defined by
$$\mu(t,x)=\inf\{s>0: \tau(e_{(s,\infty)}(|x|))\leq t\}, \quad t>0.$$
The function $t\mapsto\mu(t,x)$ is decreasing and right-continuous \cite{FK}. For the case that $\M$ is the abelian von Neumann algebra $L_\infty(0,1)$ with the trace given by integration with respect to the Lebesgue measure,  $L_0(\mathcal{M})$ is the space of all measurable functions and $\mu(f)$ is the decreasing rearrangement of the measurable function $f$; see \cite{KPS,LT}.

A Banach (or quasi-Banach) function space $(E,\|\cdot\|_E)$ on the interval $(0,\alpha)$, $0<\alpha\leq \infty$ is called symmetric if for every $g\in E$ and for every measurable function $f$ with $\mu(f)\leq\mu(g)$, we have $f\in E$ and $\|f\|_E\leq\|g\|_E.$

For a given symmetric Banach (or quasi-Banach) space $(E,\|\cdot\|_E)$, we define the corresponding noncommutative space on $(\M,\tau)$ by setting \cite{KS}
\begin{equation}\label{ks def}
E(\M,\tau):=\{x\in L_0(\M,\tau):\mu(x)\in E\}.
\end{equation}
Endowed with the quasi-norm $\|x\|_{E(\M)}:=\|\mu(x)\|_E,$ the space $E(\M,\tau)$ (or, briefly, $E(\M)$) is called the noncommutative symmetric space associated with $(\M,\tau)$ corresponding to the function space $(E,\|\cdot\|_E)$. It is shown in \cite{S} that the quasi-norm space $(E(\M),\|\cdot\|_{E(\M)})$ is complete if $(E,\|\cdot\|_E)$ is complete.

Throughout this paper, we write $A\lesssim_{E} B$ if there is a constant $C_E>0$ depending only on $E$ such that $A\leq C_E B$; and we write $A\approx_E B$ if both $A\leq C_E B$ and $B\leq C_E A$ hold, possibly with different constants; we write $A\approx B$ if the inequalities above hold for an absolute constant $C$ which is independent of $E.$

\subsection{$Z_E^p$ spaces and interpolation spaces}\label{subsect ze}

As usual, $(L_p\cap L_q)(0,\infty)$ is the intersection of the Banach spaces $L_p(0,\infty)$ and $L_q(0,\infty).$ Its norm is given by the formula
$$\|f\|_{L_p\cap L_q}=\max\{\|f\|_p,\|f\|_q\}.$$
Also, $(L_p+L_q)(0,\infty)$ is the sum of the Banach spaces $L_p(0,\infty)$ and $L_q(0,\infty).$ Its norm is given by the formula
$$\|f\|_{L_p+L_q}=\inf\{\|g\|_p+\|h\|_q:\ f=g+h\}.$$
In the computations below, we often use the well-known Holmstedt formula (see \cite[Theorem 4.1]{H}), that is, for $0<p<q\leq\infty$
\begin{equation} \label{p+q}
\|f\|_{L_p+L_q} \approx \Big(\int_0^1\mu(t,f)^pdt\Big)^{\frac{1}{p}}+\Big(\int_1^\infty \mu(t,f)^qdt\Big)^{\frac{1}{q}},\quad f\in (L_p+L_q)(0,\infty).
\end{equation}
For every semifinite von Neumann algebra $\N$, the spaces $(L_p\cap L_q)(\N)$ and $(L_p+L_q)(\N)$ are defined according to \eqref{ks def} (see \cite{KS}).
For every $1\leq p,q<\infty,$ we have (see e.g. \cite[page 64]{PWS})
\begin{equation}\label{intersect banach dual}
((L_p\cap L_q)(\N))^*=(L_{\frac{p}{p-1}}+L_{\frac{q}{q-1}})(\N),
\end{equation}
\begin{equation}\label{sum banach dual}
((L_p+L_q)(\N))^*=(L_{\frac{p}{p-1}}\cap L_{\frac{q}{q-1}})(\N).
\end{equation}

The following useful construction can be found in \cite{JS}. If $E$ is a symmetric quasi-Banach space on the interval $(0,1)$, then the space $Z_E^p,$ $1\leq p\leq\infty,$ consists of all measurable functions $f\in L_1(0,\infty)+L_\infty(0,\infty)$ for which
$$\|f\|_{Z_E^p}:=\|\mu(f)\chi_{(0,1)}\|_E+\|f\|_{L_1+L_p}<\infty.$$
Clearly, $Z_E^p$ is a symmetric Banach function space on the semi-axis $(0,\infty).$ For every semifinite von Neumann algebra $\N$, the space $Z_E^p(\N)$ is defined according to \eqref{ks def} (see \cite{KS}).

Further, it is useful to note that $Z_{L_p}^2=(L_p+L_2)(0,\infty)$ for $p<2$ and $Z_{L_p}^2=(L_p\cap L_2)(0,\infty)$ for $p>2.$

Let $F_1,F_2$ and $F_1+F_2\subset E\subset F_1+F_2$ be symmetric quasi-Banach spaces. We say that $E$ is an interpolation space between $F_1$ and $F_2$ (written $E\in {\rm Int}(F_1,F_2)$) if, for every linear map $T:F_1+F_2\to F_1+F_2$ such that (the restrictions) $T:F_1\to F_1$ and $T:F_2\to F_2$ are bounded, we also have $T:E\to E$ is bounded. We refer to \cite{KPS} and \cite{KM} for some necessary background on interpolation.

In what follows, instead of writing $T_1:F_1\to E_1,$ $T_2:F_2\to E_2$ and $T_1|_{F_1\cap F_2}=T_2|_{F_1\cap F_2}$ we simply write $T:F_1\to E_1$ and $T:F_2\to E_2$ where $T$ is a common notation for $T_1$ and $T_2.$ 

\subsection{Noncommutative probability space and independence}

In this subsection we introduce some basic definitions concerning noncommutative probability spaces and noncommutative independence. Let $\mathcal{M}$ be a finite von Neumann algebra equipped with a  faithful normal trace $\tau$. If $\tau(1)=1$, we call $(\mathcal{M},\tau)$ a noncommutative probability space. Let $L_0^h(\mathcal{M},\tau)$ denote the set of all self-adjoint elements in $L_0(\mathcal{M},\tau),$ which are called (noncommutative) random variables.

We now introduce the noncommutative independence. The following definition of independence can be found in \cite[page 233]{JX}. In a sense, it is the widest possible definition of independence. The germ of this definition was known much earlier, see e.g. Sarymsakov's book \cite{Sarym}.
\begin{defi}\label{NC defi} Let $(\mathcal{M},\tau)$ be a noncommutative probability space.
\begin{enumerate}[{\rm (i)}]
\item We say that a sequence $\{\M_k\}_{k\geq0}$ of von Neumann subalgebras in $\M$ is independent (with respect to $\tau$) if for every $k$, $\tau(xy)=\tau(x)\tau(y)$ holds for all $x\in\mathcal{M}_k$ and for every $y$ in the von Neumann algebra generated by $\{\mathcal{M}_j\}_{j\neq k}.$
\item A sequence $\{x_k\}_{k\geq0}\subset L_0^h(\M)$ is said to be independent with respect to $\tau$ if the unital von Neumann subalgebras $\M_k,$ $k\geq0,$ generated by $x_k,$ $k\geq0,$ are independent.
\end{enumerate}
\end{defi}

Here, we follow a physical tradition (and a widely spread mathematical tradition in probability theory) that random variables are self-adjoint. However,  in \cite{JX} this assumption was not imposed and exactly the same definition of independence is given for arbitrary elements.

Examples 1.4 and 1.5 in \cite{JX} show that the notion of classical independence and that of free independence are very special cases of Definition \ref{NC defi}. Also, fermions (i.e. anti-commuting Pauli matrices) generate independent algebras (even though these algebras are very thin). However, \cite{JX} supplies other interesting examples of independent random variables. Namely, Example 1.6 in \cite{JX} demonstrates that $q$-independence introduced in \cite{BozSp} is also a special case of Junge-Xu independence.

\begin{rmk} Let $\N$ and $\M_k,$ $k\geq0,$ be von Neumann subalgebras of $\M.$ In \cite{JX}, the subalgebras $\{\M_k\}_{k\geq0}$ are said to be independent with respect to $\N$ if for every $k$, $\mathcal{E}_\N(xy)=\mathcal E_\N(x)\mathcal{E}_\N(y)$ holds for all $x\in\mathcal{M}_k$ and for every $y$ in the unital von Neumann algebra generated by $\{\mathcal{M}_j\}_{j\neq k}.$ This notion generalises the classical notion of {\it conditional independence}. Our Definition \ref{NC defi} is the special case of that in \cite{JX} with $\N=\mathbb C.$ That is, the notion introduced in Definition \ref{NC defi} generalises the classical notion of independence and coincides with the latter when $\M=L_{\infty}(0,1).$
\end{rmk}

\subsection{Orlicz functions and Orlicz spaces} Let $\Phi:\mathbb{R}\to\mathbb{R}_+$ be an Orlicz function, that is, $\Phi$ is an even convex function such that $\Phi(0)=0$ and $\Phi(\infty)=\infty.$ Given an Orlicz function $\Phi$ and an operator $x\in L_0(\M,\tau)$, we have by \cite[Corollary 2.8]{FK},
$$\tau(\Phi(|x|))=\int_0^\infty\Phi\big(\mu(t,x)\big)dt.$$
Obviously, if $\Phi(t)=t^p$ for $1\leq p<\infty,$ then this reduces to the usual $p$-moment of $|x|.$ By induction, it follows from \cite[Proposition 4.2 (ii)]{FK} that for every sequence $\{x_i\}_{i=0}^n\subset L_0(\M,\tau)$ and scalars $\{\lambda_i\}_{i=0}^n\in (0,1)$ with $\sum_{i=0}^n\lambda_i\leq1,$
\begin{equation}\label{convex phi}
\tau\Big(\Phi\Big(|\sum_{i=0}^n\lambda_ix_i|\Big)\Big)\leq \sum_{i=0}^n\lambda_i\tau\Big(\Phi(|x_i|)\Big).
\end{equation}
We now recall the definition of Orlicz spaces. Given an Orlicz function $\Phi$, the Orlicz function space $L_\Phi(0,\alpha)$, $0<\alpha\leq\infty$ is the set of all measurable functions $f$ on $(0,\alpha)$ such that
$$\|f\|_{L_\Phi}:=\inf\Big\{\lambda>0,\int_0^\infty\Phi\Big(\frac{|f(t)|}\lambda\Big)\Big\}\leq1.$$
An Orlicz function is said to satisfy $\Delta_2$-condition if there exists a constant $a>1$ such that $\Phi(at)\lesssim_a \Phi(t)$ for all $t>0.$ Note that if $\Phi$ satisfies $\Delta_2$-condition, then $f\in L_\Phi(0,\alpha)$ if and only if $\int_0^\infty\Phi(|f|)<\infty.$
If $\Phi$ satisfy the $\Delta_2$-condition, then by \eqref{convex phi} we have
\begin{equation}\label{phi quasi-trangle}
\tau(\Phi(|x+y|))\lesssim_\Phi\tau(\Phi(|x|))+\tau(\Phi(|y|)),\quad x,y \in L_\Phi(\M).
\end{equation}

Given $1\leq p\leq q\leq\infty$, an Orlicz function $\Phi$ is said to be $p$-convex if the function $t\to\Phi(t^{\frac 1p}),$ $t>0$ is convex, and $\Phi$ is said to be $q$-concave if the function $t\to\Phi(t^{\frac1q}),$ $t>0$ is concave. Orlicz function $\Phi$ satisfies $\Delta_2$-condition if and only if it is $q$-concave for some $q<\infty.$ (see e.g. \cite[Lemma 5]{AS2014}).

\subsection{Hardy-Littlewood and uniform submajorization}

In this subsection, we introduce the Hardy-Littlewood submajorization and uniform submajorization, which are used in the sequel.

\begin{defi}\label{HLM} Let $\M$ be a semifinite von Neumann algebra and $x,y\in (L_1+L_{\infty})(\M).$ We write $y\prec\prec x$ if
$$\int_0^t\mu(s,y)ds\leq\int_0^t\mu(s,x)ds,\quad t>0.$$
\end{defi}

It follows from \cite[Theorem 11]{DSZ} that for every Orlicz function $\Phi$
\begin{equation}\label{majorization phi}
x\prec\prec y\Longrightarrow {\tau}(\Phi(x))\leq {\tau}(\Phi(y)),\quad \forall x,y\in(L_1+L_{\infty})(\M).
\end{equation}

The following definition is introduced originally in \cite{KS} (see also \cite{LSZ} for a more readable exposition).
\begin{defi} Let $\M$ be a semifinite von Neumann algebra and let $x,y\in(L_1+L_{\infty})(\M).$ We say that $y$ is uniformly submajorized by $x$ (written $y\vartriangleleft x$) if there exists $\lambda \in \mathbb N$ such that
$$\int_{\lambda a}^b\mu(s,y)ds\leq\int_a^b\mu(s,x)ds,\quad\lambda a\leq b.$$
\end{defi}
Uniform submajorization is stronger than the Hardy-Littlewood submajorization. That is, $y\vartriangleleft x$ implies that  $y\prec\prec x.$

\subsection{Conditional expectations}

Let $(\mathcal{M},\tau)$ be a noncommutative probability space. If $\mathcal{D}$ is a von Neumann subalgebra of $\mathcal{M},$ then there exists a unique linear operator $\mathcal{E}_{\mathcal{D}}:\mathcal{M}\to\mathcal{D}$ such that, for all $A\in\mathcal{M}$ and $B\in\mathcal{D}$,
\begin{enumerate}[{\rm (a)}]
\item $\mathcal{E}_{\mathcal{D}}(AB)=\mathcal{E}_{\mathcal{D}}(A)B$
\item $\mathcal{E}_{\mathcal{D}}(BA)=B\mathcal{E}_{\mathcal{D}}(A)$
\item $\tau(\mathcal{E}_{\mathcal{D}}(A))=\tau(A)$.
\end{enumerate}
Furthermore, $\mathcal{E}_{\mathcal{D}}$ is positive unital trace preserving linear map. That is, $\mathcal{E}_{\mathcal{D}}$ is contraction from $\mathcal{M}$ onto $\mathcal{D}$ and it uniquely extends to a contraction from $L_1(\mathcal{M})$ onto $L_1(\mathcal{D}).$ In particular, $\mathcal{E}_{\mathcal{D}}(A)\prec\prec A.$ See, for example, \cite{SS} for these and other facts.

\subsection{Various tensor product algebras}

In what follows, $\ell_{\infty}$ is the von Neumann algebra of all bounded sequences (enumeration starts from $0$). It can be viewed as acting on the Hilbert space $\ell_2$ of all square-summable sequences. In what follows, $\{e_k\}_{k\geq0}$ are the standard unital vectors in $\ell_{\infty}.$ Also, $\mathcal{L}(\ell_2)$ is the von Neumann algebra of all bounded operators on $\ell_2.$ As usual, $\{e_{k,l}\}_{k,l\geq0}$ are the matrix units in $\mathcal{L}(\ell_2).$

Let $(\mathcal{M},\tau)$ be a noncommutative probability space. In what follows, we frequently use algebras $\mathcal{M}\bar{\otimes}\mathcal{L}(\ell_2),$ $\mathcal{M}\bar{\otimes}\ell_{\infty}$ and $\mathcal{M}\bar{\otimes}L_{\infty}(0,1).$ We equip those algebras with their natural (semi-)finite traces. i.e. $\tau\otimes{\rm Tr},$ $\tau\otimes\Sigma$ and $\tau\otimes\int.$

Here, ${\rm Tr}$ is the standard trace on $\mathcal{L}(\ell_2)$ given by the formula
$${\rm Tr}(A)=\sum_{k\geq0}A_{k,k},\quad A=\{A_{k,l}\}_{k,l\geq0}\in L_1(\mathcal{L}(\ell_2)).$$
$\Sigma:L_1(\ell_{\infty})=\ell_1\to\mathbb{C}$ is defined by the formula
$$\Sigma(\{x_k\}_{k\geq0})=\sum_{k\geq0}x_k,\quad x=\{x_k\}_{k\geq0}\in L_1(\ell_{\infty}).$$
And, $\int$ is just the Lebesgue integration on $L_{\infty}(0,1).$

Those operators affiliated to $\mathcal{M}\bar{\otimes}\ell_{\infty},$ which happen to be $\tau\otimes\Sigma-$measurable, are written as $x=\sum_{k\geq0}x_k\otimes e_k.$ Also,  $\tau\otimes{\rm Tr}-$measurable operators affiliated to $\mathcal{M}\bar{\otimes}\mathcal{L}(\ell_2)$ are written as $x=\sum_{k,l\geq0}x_{k,l}\otimes e_{k,l}.$

\section{Interpolation lemmas}\label{interpol section}

In this section, we provide several interpolation lemmas. Lemma \ref{first interpolation lemma} and Lemma \ref{second interpolation lemma} are used to prove Theorem \ref{js thm}, and Lemma \ref{third interpolation lemma} and Lemma \ref{fourth interpolation lemma} are employed for the proof of Theorem \ref{khinchine}.

Prior to stating lemmas, note that the space $Z_E^2(\mathcal{N})$ sits inside the space $(L_1+L_2)(\mathcal{N})$ and therefore, one can speak of the reduction of the operator $V$ (defined on the latter space) to $Z_E^2(\mathcal{N}).$

\begin{lem}\label{first interpolation lemma} Let $(\mathcal{M},\tau)$ be a finite von Neumann algebra and let $(\mathcal{N},\nu)$ be a semifinite atomless one. If $V:(L_1+L_2)(\mathcal{N})\to L_1(\mathcal{M})$ and $V:(L_q\cap L_2)(\mathcal{N})\to L_q(\mathcal{M}),$ $2\leq q<\infty,$ are bounded linear operators, then for every symmetric function space $E\in {\rm Int}(L_1,L_q),$ we have
$$\|V(x)\|_{E(\mathcal{M})}\lesssim_E \|x\|_{Z_E^2(\mathcal{N})},\quad x\in Z_E^2(\mathcal{N}).$$
\end{lem}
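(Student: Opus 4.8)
The plan is to deduce the claim from a standard interpolation argument, but the main subtlety is that $Z_E^2$ is \emph{not} an interpolation space between $(L_1+L_2)$ and $(L_q\cap L_2)$ in an obvious way, so one must first identify $Z_E^2$ correctly as a ``$K$-functional'' or ``orbit'' space built from $E$. First I would recall that, by Holmstedt's formula \eqref{p+q}, the norm $\|x\|_{Z_E^2(\mathcal N)} = \|\mu(x)\chi_{(0,1)}\|_E + \|x\|_{(L_1+L_2)(\mathcal N)}$ is, up to equivalence, the norm of the couple obtained by ``gluing'' $E$ on $(0,1)$ to $L_1+L_2$ on $(1,\infty)$; more precisely, for $q\ge 2$ one has $Z_E^2 = Z_E^1$ up to equivalence of norms on the relevant range because $\mu(x)\chi_{(1,\infty)}\in L_2\subset L_1$, and the key point is that $Z_E^2(\mathcal N)$ is an interpolation space between $(L_1+L_2)(\mathcal N)$ and $(L_q\cap L_2)(\mathcal N)$. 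This last fact should follow from the hypothesis $E\in\mathrm{Int}(L_1,L_q)$: one transfers the interpolation property of $E$ between $L_1(0,1)$ and $L_q(0,1)$ to the couple $((L_1+L_2)(\mathcal N), (L_q\cap L_2)(\mathcal N))$ via the computation $Z_{L_1}^2 = L_1+L_2$ and $Z_{L_q}^2 = L_q\cap L_2$ (for $q\ge 2$), recorded in the excerpt just before Lemma \ref{first interpolation lemma}.

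Concretely, I would proceed as follows. Step one: show that for any symmetric spaces, $Z_{(\cdot)}^2$ is a functor on the relevant category, i.e.\ any operator $T$ bounded on $(L_1+L_2)(\mathcal N)$ and on $(L_q\cap L_2)(\mathcal N)$ maps $Z_E^2(\mathcal N)$ boundedly into itself with norm controlled by the $E$-interpolation constant; this uses that the map $x\mapsto \mu(x)\chi_{(0,1)}$ interacts well with the couple and that $E\in\mathrm{Int}(L_1,L_q)$. Step two: apply this to the operator $V$ itself — but $V$ goes between \emph{different} algebras $\mathcal N$ and $\mathcal M$, so instead of self-maps I need the ``two-sided'' version of interpolation: $V$ is bounded $(L_1+L_2)(\mathcal N)\to L_1(\mathcal M)$ and $(L_q\cap L_2)(\mathcal N)\to L_q(\mathcal M)$, hence by the interpolation property applied to the couples $((L_1+L_2)(\mathcal N),(L_q\cap L_2)(\mathcal N))$ and $(L_1(\mathcal M),L_q(\mathcal M))$ — noting $E(\mathcal M)\in\mathrm{Int}(L_1(\mathcal M),L_q(\mathcal M))$ since $E\in\mathrm{Int}(L_1,L_q)$ and the noncommutative spaces inherit interpolation (Proposition-level facts from \cite{KS,DDP}) — we get $V:Z_E^2(\mathcal N)\to E(\mathcal M)$ bounded, which is exactly the claim. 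The constant depends only on $E$ because the interpolation constant of $E$ between $L_1$ and $L_q$ is an invariant of $E$.

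The only genuine work is Step one, i.e.\ proving that $\|T\colon Z_E^2\to Z_E^2\| \lesssim_E \max\{\|T\colon L_1+L_2\to L_1+L_2\|,\|T\colon L_q\cap L_2\to L_q\cap L_2\|\}$. I expect this to be the main obstacle: one cannot directly say $Z_E^2\in\mathrm{Int}(L_1+L_2, L_q\cap L_2)$ as symmetric spaces on $(0,\infty)$ without an argument, because the defining norm of $Z_E^2$ is a sum of two pieces living on $(0,1)$ and on $(0,\infty)$. The way around this is to observe that, on $(0,\infty)$, the orbit/$K$-functional description gives $\|x\|_{Z_E^2}\approx \|\sigma_1\mu(x) + (\text{tail in }L_1+L_2)\|$ in a form to which one applies the Calderón–Mityagin-type theorem together with $E\in\mathrm{Int}(L_1,L_q)$; equivalently, since $Z_{L_1}^2=L_1+L_2$ and $Z_{L_q}^2=L_q\cap L_2$, the construction $E\mapsto Z_E^2$ is a $K$-monotone (hence interpolation) functor on the couple $(L_1,L_q)\mapsto(L_1+L_2,L_q\cap L_2)$, and this monotonicity is what has to be checked by a direct estimate on decreasing rearrangements. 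Once that is in hand, the rest is bookkeeping: combine with the transfer of scalar interpolation to the noncommutative setting and with the boundedness hypotheses on $V$ to conclude $\|V(x)\|_{E(\mathcal M)}\lesssim_E\|x\|_{Z_E^2(\mathcal N)}$.
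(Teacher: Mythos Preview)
Your approach is genuinely different from the paper's, and as written it has a real gap.

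The paper does not attempt to identify $Z_E^2$ as an abstract interpolation space for the couple $(L_1+L_2,\,L_q\cap L_2)$. Instead it argues directly: for self-adjoint $x$, use atomlessness of $\mathcal N$ to produce a projection $\mathbf q$ commuting with $x$, of trace $1$, with $|x|\mathbf q\ge \mu(1,x)\mathbf q$. On the finite corner $\mathbf q\mathcal N\mathbf q$ one has $(L_1+L_2)(\mathbf q\mathcal N\mathbf q)=L_1(\mathbf q\mathcal N\mathbf q)$ and $(L_q\cap L_2)(\mathbf q\mathcal N\mathbf q)=L_q(\mathbf q\mathcal N\mathbf q)$, so the hypotheses on $V$ reduce to $V:L_1\to L_1$ and $V:L_q\to L_q$, and ordinary interpolation (together with $E\in\mathrm{Int}(L_1,L_q)$) gives $\|V(\mathbf qx\mathbf q)\|_{E(\mathcal M)}\lesssim_E\|\mu(x)\chi_{(0,1)}\|_E$. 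The complementary piece $(1-\mathbf q)x(1-\mathbf q)$ is shown to lie in $(L_q\cap L_2)(\mathcal N)$ with norm $\lesssim\|x\|_{L_1+L_2}$, so $V$ maps it into $L_q(\mathcal M)\subset E(\mathcal M)$ (here finiteness of $\mathcal M$ is used). Triangle inequality finishes the proof.

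Your proposal has two issues. First, the aside ``$Z_E^2=Z_E^1$ up to equivalence because $\mu(x)\chi_{(1,\infty)}\in L_2\subset L_1$'' is false: $L_2(1,\infty)\not\subset L_1(1,\infty)$, and $Z_E^1\subsetneq Z_E^2$ in general. Second, and more seriously, your Step two requires not merely that $Z_E^2(\mathcal N)$ is an interpolation space for the couple $((L_1+L_2)(\mathcal N),(L_q\cap L_2)(\mathcal N))$---that would only control self-maps of $\mathcal N$---but that $Z_E^2(\mathcal N)$ and $E(\mathcal M)$ arise from the \emph{same} interpolation functor applied to the domain couple and to $(L_1(\mathcal M),L_q(\mathcal M))$ respectively. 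You acknowledge this (``$K$-monotone functor'') but never establish it; the needed identification (roughly: if $E=(L_1,L_q)_{\Phi;K}$ then $Z_E^2=(L_1+L_2,L_q\cap L_2)_{\Phi;K}$ for the same parameter $\Phi$) is a nontrivial statement requiring its own proof, and is in effect the content of the lemma. The paper's splitting by a trace-one projection sidesteps this difficulty entirely and makes essential use of the hypotheses that $\mathcal N$ is atomless and $\mathcal M$ is finite, neither of which plays a role in your sketch.
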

\begin{proof} Fix $x=x^*\in Z_E^2(\mathcal{N})$ and consider the spectral projections associated with the sets $(\mu(1,x),\infty)$ and $\{\mu(1,x)\},$ respectively,
$${\bf p}:=e_{(\mu(1,x),\infty)}(|x|)\quad {\rm and} \quad {\bf r}:=e_{\{\mu(1,x)\}}(|x|).$$
It follows immediately from the definition of singular value function that $\nu(\bf p)\leq1.$ Since $\N$ is atomless, there exists a projection ${\bf r_0}\leq{\bf r}$ such that $\nu(\bf r_0)=1-\nu({\bf p}).$ Let ${\bf q}={\bf p}+{\bf r_0}.$ Then $\nu({\bf q})=1$, $x{\bf q}={\bf q}x$ and $|x|{\bf q}\geq\mu(1,x){\bf q}.$ Clearly, $(L_1+L_2)({\bf q}\mathcal{N}{\bf q})=L_1({\bf q}\mathcal{N}{\bf q})$ and $(L_q\cap L_2)({\bf q}\mathcal{N}{\bf q})=L_q({\bf q}\mathcal{N}{\bf q}).$ By the assumption, the operator $V:L_1({\bf q}\mathcal{N}{\bf q})\to L_1(\mathcal{M})$ is bounded and also $V:L_q({\bf q}\mathcal{N}{\bf q})\to L_q(\mathcal{M})$ is bounded. Since $E\in {\rm Int}(L_1,L_q),$ it follows from \cite[Theorem 3.2]{DDP-Int} that $V:E({\bf q}\mathcal{N}{\bf q})\to E(\mathcal{M})$ is bounded. Note that $\mu(t,{\bf q}x{\bf q})=0$ for all $t\geq \nu(\bf q)=1$, we have
$$\mu({\bf q}x{\bf q})=\mu({\bf q}x{\bf q})\chi_{(0,1)}=\mu(x)\chi_{(0,1)}.$$
Therefore,
$$\|V({\bf q}x{\bf q})\|_{E(\mathcal{M})}\lesssim_E\|{\bf q}x{\bf q}\|_{E({\bf q}\mathcal{N}{\bf q})}=\|\mu(x)\chi_{(0,1)}\|_E.$$
On the other hand, $q$ commutes with $x$ and, therefore,
$$\mu(t,(1-{\bf q})x(1-{\bf q}))=\mu(t+1,x),\quad t>0.$$
Therefore, by \eqref{p+q}, we have
\begin{eqnarray*}
\|(1-{\bf q})x(1-{\bf q})\|_{L_q(\N)}&\leq&\|(1-{\bf q})x\|_{L_q(\N)}\leq\Big(\int_1^{\infty}\mu^q(t,x)dt\Big)^{1/q}\\&\leq&\|\mu(x)\chi_{(1,\infty)}\|_2+\mu(1,x)\lesssim\|\mu(x)\|_{L_1+L_2}.
\end{eqnarray*}
Similarly,
$$\|(1-{\bf q})x(1-{\bf q})\|_{L_2(\N)}\lesssim\|\mu(x)\|_{L_1+L_2}.$$
The above arguments guarantee that the element $(1-{\bf q})x(1-{\bf q})$ belongs to $(L_q\cap L_2)(\mathcal{N})$ and that
$$\|(1-{\bf q})x(1-{\bf q})\|_{(L_q\cap L_2)(\N)}\lesssim\|\mu(x)\|_{L_1+L_2}.$$
By assumption, we know that $V((1-{\bf q})x(1-{\bf q}))$ belongs to $L_q(\M).$ Consequently, taking into account that $L_q(\M)\subset E(\M)$ we obtain that
\begin{eqnarray*}
\|V((1-{\bf q})x(1-{\bf q}))\|_{E(\M)}&\lesssim_E&\|V((1-{\bf q})x(1-{\bf q}))\|_{L_q(\M)}\\
&\lesssim_E&\|(1-{\bf q})x(1-{\bf q})\|_{(L_q\cap L_2)(\N)}\\
&\lesssim&\|\mu(x)\|_{L_1+L_2}.
\end{eqnarray*}
Noting that $x{\bf q}={\bf q}x$, we arrive at
\begin{eqnarray*}
\|Vx\|_{E(\M)}&=&\|V({\bf q}x{\bf q})+V((1-{\bf q})x(1-{\bf q}))\|_{E(\M)}\\
&\leq&\|V({\bf q}x{\bf q})\|_{E(\M)}+\|V((1-{\bf q})x(1-{\bf q}))\|_{E(\M)}\\
&\lesssim_E&\|\mu(x)\chi_{(0,1)}\|_E+\|\mu(x)\|_{L_1+L_2}\lesssim\|x\|_{Z_E^2(\N)}.
\end{eqnarray*}
This proves the assertion for self-adjoint $x.$

Let now $x$ be an arbitrary element from $Z_E^2(\N).$ By splitting $x$ into its real part and imaginary parts, we obtain
$$\|Vx\|_{E(\M)}\leq \|V(\Re(x))\|_{E(\M)}+\|V(\Im(x))\|_{E(\M)}\lesssim_E\|x\|_{Z_E^2(\N)},\quad \forall x\in Z_E^2(\N).$$
\end{proof}

\begin{lem}\label{second interpolation lemma} Let $(\mathcal{M},\tau)$ be a finite von Neumann algebra and let $(\mathcal{N},\nu)$ be a semifinite atomless one. If $V:L_p(\mathcal{M})\to(L_p+L_2)(\mathcal{N}),$ $1\leq p\leq 2,$ and $V:L_{\infty}(\mathcal{M})\to(L_2\cap L_{\infty})(\mathcal{N}),$ are bounded linear operators, then for every $E\in{\rm Int}(L_p,L_\infty),$
$$\|V(x)\|_{Z_E^2(\mathcal{N})}\lesssim_E \|x\|_{E(\mathcal{M})},\quad x\in E(\mathcal{M}).$$
\end{lem}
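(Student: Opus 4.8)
The plan is to mirror the argument of Lemma \ref{first interpolation lemma}, but now using duality to turn the "codomain" interpolation into a "domain" interpolation that we already know how to handle. The key observation is that $Z_E^2(\mathcal{N})$ splits naturally as a "small part" living on a subalgebra of trace $1$ (measured in $E$) plus a "large part" living in $(L_p\cap L_2)(\mathcal{N})$ when $p<2$, or $(L_2\cap L_p)(\mathcal{N})$-type behaviour in general; more precisely, by the Holmstedt formula \eqref{p+q} and the definition of $Z_E^2$, controlling $\|V(x)\|_{Z_E^2(\mathcal{N})}$ amounts to controlling $\|\mu(V(x))\chi_{(0,1)}\|_E$ and $\|V(x)\|_{L_1+L_2}$ (equivalently $\|V(x)\|_{L_p+L_2}$ for $p\le 2$, since on a probability-type scale these agree up to constants on the relevant range).

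First I would reduce, exactly as in the previous lemma, to self-adjoint $x$ by splitting into real and imaginary parts. Next, I would handle the $L_p+L_2$ bound on $V(x)$: the hypothesis $V:L_p(\mathcal{M})\to (L_p+L_2)(\mathcal{N})$ gives $\|V(x)\|_{L_p+L_2}\lesssim \|x\|_{L_p(\mathcal{M})}\lesssim \|x\|_{E(\mathcal{M})}$, using $E\in{\rm Int}(L_p,L_\infty)\subset L_p$ (so $\|\cdot\|_{L_p}\lesssim_E\|\cdot\|_E$ on $E(\mathcal{M})$, as $E\subset L_p$ with continuous inclusion for interpolation spaces between $L_p$ and $L_\infty$). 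That takes care of the second summand in the $Z_E^2$ norm.

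The main work is the first summand $\|\mu(V(x))\chi_{(0,1)}\|_E$. Here I would argue that the map $x\mapsto \mu(V(x))\chi_{(0,1)}$, or rather the compression of $V(x)$ to a projection of trace $1$ in $\mathcal{N}$, is bounded $L_p(\mathcal{M})\to L_p$ and $L_\infty(\mathcal{M})\to L_\infty$ on the relevant truncated scale. Indeed, from $V:L_p(\mathcal{M})\to (L_p+L_2)(\mathcal{N})$, truncation to the interval $(0,1)$ (i.e. compressing by the spectral projection ${\bf q}$ of $|V(x)|$ corresponding to its largest part, with $\nu({\bf q})=1$, built as in Lemma \ref{first interpolation lemma} using atomlessness of $\mathcal{N}$) kills the $L_2$-tail and yields $\|{\bf q}V(x){\bf q}\|_{L_p({\bf q}\mathcal{N}{\bf q})}\lesssim \|V(x)\|_{L_p+L_2}\lesssim\|x\|_{L_p(\mathcal{M})}$; similarly, $V:L_\infty(\mathcal{M})\to (L_2\cap L_\infty)(\mathcal{N})$ gives $\|{\bf q}V(x){\bf q}\|_{L_\infty({\bf q}\mathcal{N}{\bf q})}\le\|V(x)\|_{L_\infty(\mathcal{N})}\lesssim\|x\|_{L_\infty(\mathcal{M})}$. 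Thus the operator $x\mapsto {\bf q}V(x){\bf q}$ (after identifying ${\bf q}\mathcal{N}{\bf q}$ with a fixed probability space, since on a trace-$1$ algebra $L_p=(L_p+L_2)$ and $L_\infty=(L_2\cap L_\infty)$ with equivalent norms) is bounded $L_p(\mathcal{M})\to L_p$ and $L_\infty(\mathcal{M})\to L_\infty$; by $E\in{\rm Int}(L_p,L_\infty)$ and \cite[Theorem 3.2]{DDP-Int} it is bounded $E(\mathcal{M})\to E$, giving $\|\mu(V(x))\chi_{(0,1)}\|_E = \|{\bf q}V(x){\bf q}\|_{E({\bf q}\mathcal{N}{\bf q})}\lesssim_E\|x\|_{E(\mathcal{M})}$.

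The delicate point — and the step I expect to be the main obstacle — is that the projection ${\bf q}$ depends on $x$, so $x\mapsto {\bf q}V(x){\bf q}$ is not a priori a fixed linear map to which \cite[Theorem 3.2]{DDP-Int} applies directly; this is precisely the issue already confronted in Lemma \ref{first interpolation lemma}, and the resolution there is that one does not need linearity in $x$ of the compressed map but only the two norm estimates $L_p\to L_p$ and $L_\infty\to L_\infty$ for the \emph{fixed} linear map $V$ followed by compression to a trace-$1$ corner, which are genuinely interpolation statements about $V$ restricted to corners. I would therefore phrase it as: for each $x$, the corner ${\bf q}\mathcal{N}{\bf q}$ has trace $1$, $V$ restricted there lands (after compression) with the two stated endpoint bounds controlled by $\|x\|_{L_p}$ and $\|x\|_{L_\infty}$ respectively, and then invoke interpolation of $V$ between $L_p(\mathcal{M})$ and $L_\infty(\mathcal{M})$ into $E({\bf q}\mathcal{N}{\bf q})$ — legitimate because ${\bf q}V(\cdot){\bf q}$ \emph{is} linear in its argument once ${\bf q}$ is fixed, and the endpoint constants are uniform in ${\bf q}$. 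Combining the two summands gives $\|V(x)\|_{Z_E^2(\mathcal{N})}\lesssim_E\|x\|_{E(\mathcal{M})}$, completing the proof.
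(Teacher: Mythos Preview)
Your approach is essentially the paper's: fix $x$, build a trace-$1$ projection ${\bf q}$ from the spectral data of $|Vx|$, interpolate the linear operator $W:z\mapsto {\bf q}V(z){\bf q}$ between the $L_p$ and $L_\infty$ endpoints, and control the tail by the $(L_p+L_2)$-bound on $Vx$. Your organisation differs only cosmetically: you split the $Z_E^2$-norm into its two defining pieces $\|\mu(Vx)\chi_{(0,1)}\|_E$ and $\|Vx\|_{L_1+L_2}$, whereas the paper splits $Vx$ as ${\bf q}Vx{\bf q}+{\bf q}Vx(1-{\bf q})+(1-{\bf q})Vx$ and estimates each piece in $Z_E^2$. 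Both reach the same conclusion via the same interpolation of $W$.

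There is, however, one genuine gap. Reducing to self-adjoint $x$ is not enough: your key identity $\|\mu(Vx)\chi_{(0,1)}\|_E=\|{\bf q}Vx{\bf q}\|_{E({\bf q}\mathcal{N}{\bf q})}$ requires ${\bf q}$ to \emph{commute with $Vx$}, not merely with $|Vx|$. If $Vx$ is not self-adjoint, spectral projections of $|Vx|$ sit on the right of the polar decomposition $Vx=u|Vx|$ and need not commute with $Vx$, so the identity fails. The paper handles this by an additional reduction you omitted: write $V=V_1+iV_2$ with $V_1(z)=\tfrac12(V(z)+V(z^*)^*)$ and $V_2(z)=\tfrac1{2i}(V(z)-V(z^*)^*)$, so that each $V_j$ maps self-adjoint operators to self-adjoint operators and inherits the same endpoint bounds. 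Once you also assume $V$ preserves self-adjointness, $Vx$ is self-adjoint for self-adjoint $x$, ${\bf q}$ (built as in Lemma~\ref{first interpolation lemma} from $|Vx|$) does commute with $Vx$, and your identity and the rest of your argument go through. (Your opening mention of ``duality'' is a red herring; neither your outline nor the paper's proof uses it here.)
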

\begin{proof} Without loss of generality, we may assume that $V$ maps self-adjoint operators to self-adjoint ones. Indeed, we can write $V=V_1+iV_2$, where
$$V_1:x\to \frac12\Big(V(x)+V(x^*)^*\Big)\quad {\rm and}\quad V_2:x\to \frac{1}{2i}\Big(V(x)-V(x^*)^*\Big),$$
so that both $V_1$ and $V_2$ map self-adjoint operators to self-adjoint ones.

Fix an element $x=x^*\in E(\mathcal{M})$ and consider the spectral projection of $|Vx|$ associated with the interval $(\mu(1,Vx),\infty),$
$${\bf p}:=e_{(\mu(1,Vx),\infty)}(|Vx|).$$
It is clear that $\nu({\bf p})\leq1.$ Fix a projection ${\bf q}\geq {\bf p}$ such that $\nu({\bf q})=1.$ Consider the operator
$$W:z\to {\bf q}\cdot Vz\cdot {\bf q},\quad z\in (L_p+L_{\infty})(\mathcal{M}).$$
By the assumption, $W:L_p(\mathcal{M})\to L_p({\bf q}\mathcal{N}{\bf q})$ and $W:L_{\infty}(\mathcal{M})\to L_{\infty}({\bf q}\mathcal{N}{\bf q})$ are contractions. Since $E$ is an interpolation space between $L_p(0,1)$ and $L_{\infty}(0,1),$ it follows from \cite[Theorem 3.2]{DDP-Int} that $W:E(\mathcal{M})\to E({\bf q}\mathcal{N}{\bf q})$ is a bounded map. Noting that $\mu({\bf q}\cdot Vx\cdot {\bf q})$ lives on the interval $(0,1),$ we have
\begin{equation}\label{1}
\|{\bf q}\cdot Vx\cdot {\bf q}\|_{Z_E^2(\mathcal{N})}=\|Wx\|_{E({\bf q}\mathcal{N}{\bf q})}\lesssim_E\|x\|_{E(\mathcal{M})}.
\end{equation}
By \eqref{p+q}, it is clear that
\begin{equation}\label{2}
\|(1-{\bf q})\cdot Vx\|_{L_{\infty}(\mathcal{N})}\leq \|(1-{\bf p})\cdot Vx\|_{L_{\infty}(\mathcal{N})}\leq\mu(1,Vx)\leq\|Vx\|_{(L_p+L_2)(\mathcal{N})},
\end{equation}
and since $1\leq p\leq 2$ (again by \eqref{p+q}),
\begin{equation}\label{3}
\|(1-{\bf q})\cdot Vx\|_{L_2(\mathcal{N})}\leq\|\mu(Vx)\chi_{(1,\infty)}\|_2\leq\|Vx\|_{(L_p+L_2)(\mathcal{N})}.
\end{equation}
Noting that $(L_2\cap L_{\infty})(\mathcal{N})\subset Z_E^2(\mathcal{N})$ and combining \eqref{1}, \eqref{2} and \eqref{3}, we obtain that
\begin{eqnarray*}
\|Vx\|_{Z_E^2(\mathcal{N})}&\leq&\|{\bf q}\cdot Vx\cdot {\bf q}\|_{Z_E^2(\mathcal{N})}+\|{\bf q}\cdot Vx\cdot (1-{\bf q})\|_{Z_E^2(\mathcal{N})}+\|(1-{\bf q})\cdot Vx\|_{Z_E^2(\mathcal{N})}\\
&\lesssim_E&\|x\|_{E(\mathcal{M})}+\|{\bf q}\cdot Vx\cdot (1-{\bf q})\|_{(L_2\cap L_{\infty})(\mathcal{N})}+\|(1-{\bf q})\cdot Vx\|_{(L_2\cap L_{\infty})(\mathcal{N})}\\
&\leq&\|x\|_{E(\mathcal{M})}+2\Big(\|(1-{\bf q})\cdot Vx\|_{L_2(\N)}+\|(1-{\bf q})\cdot Vx\|_{L_\infty(\N)}\Big)\\
&\leq&\|x\|_{E(\mathcal{M})}+4\|Vx\|_{(L_p+L_2)(\mathcal{N})}\lesssim\|x\|_{E(\mathcal{M})}+\|x\|_{L_p(\mathcal{M})}\\
&\lesssim& \|x\|_{E(\mathcal{M})}.
\end{eqnarray*}
This proves the assertion for self-adjoint $x.$

Let now $x$ be an arbitrary element from $E(\M).$ By splitting $x$ into its real part and imaginary parts and using the observation made at the outset of the proof, we conclude the argument.
\end{proof}

\begin{lem}\label{third interpolation lemma} Let $(\mathcal{M},\tau)$ be a finite von Neumann algebra and let $(\mathcal{N},\nu)$ be a semifinite atomless one. If $V:L_q(\mathcal{M})\to L_q(\mathcal{N})$ and $V:L_p(\mathcal{M})\to L_p(\mathcal{N})$ are bounded linear maps, then for every $E\in {\rm Int}(L_p,L_q),$ $1\leq p,q\leq\infty,$
$$\|V(x)\|_{Z_E^\infty(\mathcal{N})}\lesssim_E \|x\|_{E(\mathcal{M})},\quad x\in E(\mathcal{M}).$$
\end{lem}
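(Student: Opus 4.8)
The plan is to reduce Lemma \ref{third interpolation lemma} to a direct interpolation argument on a compressed von Neumann algebra, in the same spirit as the proofs of Lemma \ref{first interpolation lemma} and Lemma \ref{second interpolation lemma}, but now exploiting the extra freedom in the target norm of $Z_E^\infty$. Recall that $\|f\|_{Z_E^\infty}=\|\mu(f)\chi_{(0,1)}\|_E+\|f\|_{L_1+L_\infty}$, so the estimate naturally splits into a ``head'' part (the part of $Vx$ living on $(0,1)$, measured in $E$) and a ``tail'' part (measured in $L_1+L_\infty$). Without loss of generality, by the usual decomposition $V=V_1+iV_2$ and splitting $x$ into real and imaginary parts, it suffices to treat self-adjoint $x$ with $V$ mapping self-adjoint operators to self-adjoint ones.

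First I would fix $x=x^*\in E(\mathcal{M})$ and let ${\bf p}:=e_{(\mu(1,Vx),\infty)}(|Vx|)$, so $\nu({\bf p})\leq 1$; choose a projection ${\bf q}\geq{\bf p}$ in $\mathcal{N}$ with $\nu({\bf q})=1$ (possible since $\mathcal{N}$ is atomless). Define $W:z\mapsto {\bf q}\cdot Vz\cdot{\bf q}$ on $(L_p+L_q)(\mathcal{M})$. Since $\nu({\bf q})=1$, we have $L_r({\bf q}\mathcal{N}{\bf q})\subset E({\bf q}\mathcal{N}{\bf q})$ with norm control for $r\in\{p,q\}$, and $W:L_p(\mathcal{M})\to L_p({\bf q}\mathcal{N}{\bf q})$, $W:L_q(\mathcal{M})\to L_q({\bf q}\mathcal{N}{\bf q})$ are bounded by hypothesis. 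By $E\in{\rm Int}(L_p,L_q)$ and \cite[Theorem 3.2]{DDP-Int}, $W:E(\mathcal{M})\to E({\bf q}\mathcal{N}{\bf q})$ is bounded, so $\|{\bf q}\cdot Vx\cdot{\bf q}\|_{Z_E^\infty(\mathcal{N})}=\|Wx\|_{E({\bf q}\mathcal{N}{\bf q})}\lesssim_E\|x\|_{E(\mathcal{M})}$, the first equality because $\mu({\bf q}\cdot Vx\cdot{\bf q})$ is supported in $(0,1)$. For the complementary part, $\|(1-{\bf q})\cdot Vx\|_{L_\infty(\mathcal{N})}\leq \mu(1,Vx)$, and since $Vx\in (L_p+L_q)(\mathcal{N})\subset (L_1+L_\infty)(\mathcal{N})$ with $\mu(1,Vx)\lesssim \|Vx\|_{L_1+L_\infty}\lesssim \|Vx\|_{L_p+L_q}$ (using Holmstedt \eqref{p+q} or just monotonicity of singular values), and one also controls $\|(1-{\bf q})\cdot Vx\cdot (1-{\bf q})\|$ and the off-diagonal term ${\bf q}\cdot Vx\cdot(1-{\bf q})$ in $L_\infty(\mathcal{N})$ by the same bound. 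Since an $L_\infty(\mathcal{N})$ element with singular value function supported away from $(0,1)$ after our compression contributes to $Z_E^\infty$ only through its $L_1+L_\infty$ norm, and $\|(1-{\bf q})\cdot Vx\|_{L_1+L_\infty(\mathcal{N})}\lesssim \|Vx\|_{L_p+L_q(\mathcal{N})}\lesssim \|x\|_{L_p(\mathcal{M})}+\|x\|_{L_q(\mathcal{M})}\lesssim \|x\|_{E(\mathcal{M})}$, we can add the three pieces (diagonal, off-diagonal, lower-right corner) using the triangle inequality in $Z_E^\infty(\mathcal{N})$ and conclude $\|Vx\|_{Z_E^\infty(\mathcal{N})}\lesssim_E\|x\|_{E(\mathcal{M})}$.

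The one point that deserves care—and which I expect to be the main obstacle—is the bookkeeping needed to pass from $\|(1-{\bf q})\cdot Vx\|$ being small in $L_\infty$ to it being small in $Z_E^\infty$: unlike in Lemma \ref{second interpolation lemma}, where one lands in $(L_2\cap L_\infty)(\mathcal{N})\subset Z_E^2(\mathcal{N})$, here we cannot hope for an $L_r$-bound with $r<\infty$ on the tail without further structure, so we must instead observe directly that the $Z_E^\infty$-quasinorm of a self-adjoint element $w$ with $\|w\|_\infty\leq c$ and $w$ ``essentially supported'' on $(1,\infty)$ (i.e. $\nu$-measure of the support of $|w|$ comparable to that allowed) is controlled by $c$ together with the $L_1+L_\infty$ piece. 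Concretely, $\|w\chi_{(0,1)}\|_E\leq \|c\chi_{(0,1)}\|_E=c\|\chi_{(0,1)}\|_E\lesssim_E c$, and $\|w\|_{L_1+L_\infty}$ is already part of the target norm, so in fact $\|w\|_{Z_E^\infty}\lesssim_E \|w\|_\infty + \|w\|_{L_1+L_\infty}$ for bounded $w$—this is where the $L_\infty$ endpoint makes life easier than at the $L_2$ endpoint of Lemma \ref{second interpolation lemma}. Assembling these estimates along the $2\times 2$ block decomposition of $Vx$ relative to ${\bf q}$ then finishes the self-adjoint case, and the general case follows by the real/imaginary-part splitting noted at the start.

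Once this is in place, the final chain of inequalities is routine: $\|Vx\|_{Z_E^\infty(\mathcal{N})}$ is at most the sum of the three block norms, the diagonal block is $\lesssim_E\|x\|_{E(\mathcal{M})}$ by interpolation, and each of the other two blocks is $\lesssim_E\|x\|_{L_p(\mathcal{M})}+\|x\|_{L_q(\mathcal{M})}\lesssim_E\|x\|_{E(\mathcal{M})}$ (using $L_p\cap L_q\subset E\subset L_p+L_q$ with norm control, which holds for any interpolation space). This yields the claimed bound with a constant depending only on $E$.
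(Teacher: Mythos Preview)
Your approach is essentially the paper's: compress by a projection ${\bf q}$ of trace $1$, interpolate $W={\bf q}V(\cdot){\bf q}:E(\mathcal{M})\to E({\bf q}\mathcal{N}{\bf q})$ via \cite[Theorem 3.2]{DDP-Int}, and control the remaining blocks through $\|(1-{\bf q})Vx\|_\infty\le\mu(1,Vx)$ together with the elementary embedding $L_\infty\hookrightarrow Z_E^\infty$.

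There is, however, a slip in your final bookkeeping. You write
\[
\|Vx\|_{L_p+L_q(\mathcal{N})}\lesssim \|x\|_{L_p(\mathcal{M})}+\|x\|_{L_q(\mathcal{M})}\lesssim_E\|x\|_{E(\mathcal{M})},
\]
and justify the last inequality by ``$L_p\cap L_q\subset E\subset L_p+L_q$''. But $\|x\|_{L_p}+\|x\|_{L_q}\approx\|x\|_{L_p\cap L_q}$, and bounding this by $\|x\|_E$ would require $E\subset L_p\cap L_q$, which is the \emph{wrong} inclusion; for a generic $x\in E(\mathcal{M})$ one need not have $x\in L_q(\mathcal{M})$ at all. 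The repair is immediate and is exactly what the paper does: assume without loss of generality $p\le q$, so that on the finite algebra $\mathcal{M}$ one has $E\subset L_p+L_q=L_p$, and then bound directly
\[
\mu(1,Vx)\le\|Vx\|_{L_p(\mathcal{N})}\lesssim\|x\|_{L_p(\mathcal{M})}\lesssim\|x\|_{E(\mathcal{M})}.
\]
With this correction your argument coincides with the paper's proof.
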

\begin{proof} Fix an element $x=x^*\in E(\mathcal{M})$ and define the projections ${\bf p}$ and ${\bf q}$ as in the proof of Lemma \ref{second interpolation lemma}. Consider the operator
$$W:z\to {\bf q}\cdot Vz\cdot {\bf q},\quad z\in (L_p+L_q)(\mathcal{M}).$$
By the assumption, $W:L_q(\mathcal{M})\to L_q({\bf q}\mathcal{N}{\bf q})$ and $W:L_p(\mathcal{M})\to L_p({\bf q}\mathcal{N}{\bf q})$ are bounded. Since $E$ is an interpolation space between $L_q(0,1)$ and $L_p(0,1),$ it follows from \cite[Theorem 3.2]{DDP-Int} that $W:E(\mathcal{M})\to E({\bf q}\mathcal{N}{\bf q})$ is a bounded map.

Note that $\mu({\bf q}\cdot Vx\cdot {\bf q})$ lives on the interval $(0,1).$ For definiteness, let $p\leq q$ so that $E\subset L_p.$ We have
$$\|(1-{\bf q})\cdot Vx\|_{L_{\infty}(\mathcal{N})}\leq\|(1-{\bf p})\cdot Vx\|_{L_{\infty}(\mathcal{N})}\leq$$
$$\leq\mu(1,Vx)\leq\|Vx\|_p\lesssim_E\|x\|_p\lesssim\|x\|_{E(\mathcal{M})}.$$
We arrive at
\begin{eqnarray*}
\|Vx\|_{Z_E^{\infty}(\mathcal{N})}&\leq&\|{\bf q}\cdot Vx\cdot {\bf q}\|_{Z_E^{\infty}(\mathcal{N})}+\|{\bf q}\cdot Vx\cdot (1-{\bf q})\|_{Z_E^{\infty}(\mathcal{N})}+\|(1-{\bf q})\cdot Vx\|_{Z_E^{\infty}(\mathcal{N})}\\
&\leq&\|Wx\|_{E({\bf q}\mathcal{N}{\bf q})}+2\|(1-{\bf q})\cdot Vx\|_{L_{\infty}(\mathcal{N})}\\
&\leq& \|Wx\|_{E({\bf q}\mathcal{N}{\bf q})}+\|x\|_{E(\mathcal{M})}\lesssim_E \|x\|_{E(\mathcal{M})}.
\end{eqnarray*}
This proves the assertion for every self-adjoint $x\in E(\M).$ Arguing as at the end of Lemma \ref{first interpolation lemma}, we conclude the proof for an arbitrary $x\in E(\M).$
\end{proof}

\begin{lem}\label{fourth interpolation lemma} Let $(\mathcal{M},\tau)$ be a finite von Neumann algebra and let $(\mathcal{N},\nu)$ be a semifinite atomless one. If $V:L_p(\mathcal{N})\to L_p(\mathcal{M})$ and $V:L_q(\mathcal{N})\to L_q(\mathcal{M})$ are bounded linear maps, then for every $E\in {\rm Int}(L_p,L_q),$ $1\leq p,q\leq\infty,$
$$\|V(x)\|_{E(\mathcal{M})}\lesssim_E \|x\|_{Z_E^1(\mathcal{N})},\quad x\in Z_E^1(\mathcal{N}).$$
\end{lem}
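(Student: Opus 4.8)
The statement is the natural dual/companion of Lemma \ref{third interpolation lemma}: there we pushed an operator $V\colon\mathcal M\to\mathcal N$ (compatible on $L_p,L_q$) up to a bound $E\to Z_E^\infty$, here we want to pull an operator $V\colon\mathcal N\to\mathcal M$ down to a bound $Z_E^1\to E$. The plan is to imitate the proof of Lemma \ref{first interpolation lemma} almost verbatim, splitting the variable $x\in Z_E^1(\mathcal N)$ into a ``head'' part living on $(0,1)$, where one interpolates $V\colon L_p\to L_p$ and $V\colon L_q\to L_q$ after compressing $\mathcal N$ to a corner of trace $1$, and a ``tail'' part in $(L_p+L_q)(\mathcal N)$, where one uses the crude estimate $L_p+L_q\subset$ (whatever contains $E(\mathcal M)$) coming from $\|f\|_{L_1+L_\infty}$ being controlled by $\|f\|_{Z_E^1}$.

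First I would reduce to self-adjoint $x$ by splitting into real and imaginary parts, exactly as at the end of Lemma \ref{first interpolation lemma} (note $V$ need not preserve self-adjointness, but that is irrelevant here since we only need the target estimate and can split on the \emph{domain} side). Fix $x=x^*\in Z_E^1(\mathcal N)$ and set ${\bf p}:=e_{(\mu(1,x),\infty)}(|x|)$ and ${\bf r}:=e_{\{\mu(1,x)\}}(|x|)$, so $\nu({\bf p})\le 1$; since $\mathcal N$ is atomless pick ${\bf r_0}\le{\bf r}$ with $\nu({\bf r_0})=1-\nu({\bf p})$ and put ${\bf q}={\bf p}+{\bf r_0}$, which commutes with $x$ and has $\nu({\bf q})=1$. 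Then $\mu({\bf q}x{\bf q})=\mu(x)\chi_{(0,1)}$ and, on the corner ${\bf q}\mathcal N{\bf q}$, all of $L_p,L_q,E$ coincide (up to equivalent norms) since the trace is finite; the hypothesis gives $V\colon L_p({\bf q}\mathcal N{\bf q})\to L_p(\mathcal M)$ and $V\colon L_q({\bf q}\mathcal N{\bf q})\to L_q(\mathcal M)$ bounded, so by \cite[Theorem 3.2]{DDP-Int} and $E\in{\rm Int}(L_p,L_q)$ we get $V\colon E({\bf q}\mathcal N{\bf q})\to E(\mathcal M)$ bounded, whence
$$\|V({\bf q}x{\bf q})\|_{E(\mathcal M)}\lesssim_E\|{\bf q}x{\bf q}\|_{E({\bf q}\mathcal N{\bf q})}\approx\|\mu(x)\chi_{(0,1)}\|_E.$$

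Next, for the tail $y:=(1-{\bf q})x(1-{\bf q})$ we have $\mu(t,y)=\mu(t+1,x)$ for $t>0$, so by the Holmstedt formula \eqref{p+q} (say $p\le q$) $\|y\|_{(L_p+L_q)(\mathcal N)}\approx\|\mu(x)\chi_{(1,\infty)}\|_{L_p+L_q}\lesssim\|x\|_{Z_E^1(\mathcal N)}$ — here one uses $\|\mu(x)\chi_{(1,\infty)}\|_{L_p+L_q}\lesssim \|\mu(x)\chi_{(1,\infty)}\|_{L_1+L_q}$ together with $\|\cdot\|_{L_1+L_q}\le\|\cdot\|_{L_1+L_1}$... more precisely one bounds the tail directly by $\|x\|_{L_1+L_q}\le\|x\|_{Z_E^1}$ when $q<\infty$, and by $\mu(1,x)+$ the relevant integral when $q=\infty$, in both cases majorised by $\|x\|_{Z_E^1(\mathcal N)}$. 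Since $V\colon (L_p+L_q)(\mathcal N)\to(L_p+L_q)(\mathcal M)$ is bounded (interpolating/summing the two hypotheses) and $(L_p+L_q)(\mathcal M)\subset E(\mathcal M)$ (as $L_1\cap L_\infty\supset E\supset L_1\cap L_\infty$ forces $L_p+L_q\subset E$ on the finite algebra $\mathcal M$ with $\tau(1)=1$, because $L_q(\mathcal M)\subset E(\mathcal M)$ for $q<\infty$ and $L_\infty\subset E$), we get $\|V(y)\|_{E(\mathcal M)}\lesssim_E\|y\|_{(L_p+L_q)(\mathcal N)}\lesssim\|x\|_{Z_E^1(\mathcal N)}$. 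Finally, since ${\bf q}$ commutes with $x$, $Vx=V({\bf q}x{\bf q})+V(y)$ and the triangle inequality combines the two estimates to $\|Vx\|_{E(\mathcal M)}\lesssim_E\|\mu(x)\chi_{(0,1)}\|_E+\|x\|_{Z_E^1(\mathcal N)}\approx\|x\|_{Z_E^1(\mathcal N)}$.

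The only genuinely delicate point — and the step I expect to cost the most care — is the embedding $(L_p+L_q)(\mathcal M)\hookrightarrow E(\mathcal M)$ and the matching norm estimate $\|\mu(x)\chi_{(1,\infty)}\|_{L_p+L_q}\lesssim\|x\|_{Z_E^1(\mathcal N)}$, particularly keeping track of the endpoint case $q=\infty$ (where $L_p+L_\infty$ on a probability space is just $L_p$) and of the fact that $E\in{\rm Int}(L_p,L_q)$ forces $L_p\cap L_q\subset E\subset L_p+L_q$, so on a \emph{finite} algebra $L_p+L_q$ really is the largest of these spaces and $E$ sits inside it with a constant depending only on $E$. Everything else is a line-by-line transcription of the argument for Lemma \ref{first interpolation lemma} with the roles of $(L_1,L_q)$ replaced by $(L_p,L_q)$ and of $Z_E^2$ replaced by $Z_E^1$, so I would not belabour those computations.
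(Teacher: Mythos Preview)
Your head argument for $V({\bf q}x{\bf q})$ is correct and matches the paper. The gap is in the tail estimate: you claim the embedding $(L_p+L_q)(\mathcal M)\subset E(\mathcal M)$, but this inclusion points the wrong way. From $E\in{\rm Int}(L_p,L_q)$ one gets $L_p\cap L_q\subset E\subset L_p+L_q$, and on a probability space with $p\le q$ this reads $L_q(\mathcal M)\subset E(\mathcal M)\subset L_p(\mathcal M)$; in particular $(L_p+L_q)(\mathcal M)=L_p(\mathcal M)$ is \emph{larger} than $E(\mathcal M)$, not smaller. So routing the tail through $(L_p+L_q)(\mathcal M)$ and then trying to land in $E(\mathcal M)$ fails. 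You in fact notice this in your last paragraph, where you correctly write $E\subset L_p+L_q$ --- which contradicts the embedding you need one paragraph earlier.

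The fix, which is exactly what the paper does, is to route the tail through $L_q$ alone rather than $L_p+L_q$. With $y=(1-{\bf q})x(1-{\bf q})$ one has $\mu(t,y)=\mu(t+1,x)\le\mu(1,x)$, hence
$$\|y\|_{L_q(\mathcal N)}^q=\int_1^\infty\mu(t,x)^q\,dt\le\mu(1,x)^{q-1}\int_1^\infty\mu(t,x)\,dt\le\|x\|_{L_1(\mathcal N)}^q,$$
so $\|y\|_{L_q(\mathcal N)}\le\|x\|_{L_1(\mathcal N)}$. Now use the hypothesis $V\colon L_q(\mathcal N)\to L_q(\mathcal M)$ and the \emph{correct} embedding $L_q(\mathcal M)\subset E(\mathcal M)$ to obtain $\|V(y)\|_{E(\mathcal M)}\lesssim_E\|V(y)\|_{L_q(\mathcal M)}\lesssim\|y\|_{L_q(\mathcal N)}\le\|x\|_{L_1(\mathcal N)}\le\|x\|_{Z_E^1(\mathcal N)}$. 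This is a one-line change, but it is essential.
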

\begin{proof} Fix a self-adjoint element $x=x^*\in Z_E^1(\mathcal{N})$ and choose a projection ${\bf q}$ such that $x{\bf q}={\bf q}x,$ $\nu({\bf q})=1$ and $|x|{\bf q}\geq\mu(1,x){\bf q}.$ Similarly, the operator $V:E({\bf q}\mathcal{N}{\bf q})\to E(\mathcal{M})$ is bounded and, therefore,
$$\|V({\bf q}x{\bf q})\|_{E(\M)}\lesssim_E\|\mu({\bf q}x{\bf q})\|_E=\|\mu(x)\chi_{(0,1)}\|_E.$$
For definiteness, we assume that $p<q$ so that $L_q\subset E.$ We have
\begin{eqnarray*}
\|V((1-{\bf q})x(1-{\bf q}))\|_{E(\M)}&\lesssim_E&\|V((1-{\bf q})x(1-{\bf q}))\|_{L_q(\M)}
\\&\lesssim&\|(1-{\bf q})x(1-{\bf q})\|_{L_q(\N)}
=\|\mu(x)\chi_{(1,\infty)}\|_{L_q}
\\&\leq&\|x\|_{L_1(\N)}.
\end{eqnarray*}
Hence,
$$\|Vx\|_E=\|V({\bf q}x{\bf q})+V((1-{\bf q})x(1-{\bf q}))\|_E\leq\|\mu(x)\chi_{(0,1)}\|_E+\|x\|_{L_1(\N)}\lesssim\|x\|_{Z_E^1(\N)}.$$
This proof is complete.
\end{proof}

\section{Johnson-Schechtman inequality for noncommutative symmetric spaces}\label{js section}

This section is devoted to prove the noncommutative Johnson-Schechtman inequality, Theorem \ref{js thm}. We first give two lemmas.

\begin{lem}\label{reduction to kh} Let $(\M,\tau)$ be a noncommutative probability space. If $\{x_k\}_{k\geq0}\subset L_q(\M)$ are independent mean zero random variables, then for $1\leq q\leq\infty,$
\begin{equation}\label{rtkheq}
\|\sum_{k\geq0}x_k\|_{L_q(\M)}\leq 2\|\sum_{k\geq0}x_k\otimes r_k\|_{L_q(\M\bar{\otimes}L_\infty(0,1))},
\end{equation}
where $\{r_k\}_{k\geq0}$ is the Rademacher sequence on $(0,1)$.
\end{lem}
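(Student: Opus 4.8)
The plan is to compare the sum $\sum_k x_k$ with the "randomized" sum $\sum_k x_k \otimes r_k$ by a standard symmetrization argument, carried out inside the noncommutative $L_q$-space. First I would introduce an independent copy of the sequence. Concretely, work in the tensor product $(\M \bar\otimes \M, \tau\otimes\tau)$ and set $y_k := x_k\otimes 1$ and $y_k' := 1\otimes x_k$; since each $x_k$ is mean zero and the two copies are independent of each other, the sequence $\{y_k\}$ has the same joint distribution as $\{x_k\}$, and $\mathcal{E}(\sum_k y_k') = 0$. The key contraction step is that conditional expectation onto the first copy does not increase the $L_q$-norm: applying $\mathcal{E}_{\M\otimes 1}$ to $\sum_k (y_k - y_k')$ recovers $\sum_k x_k$, so
$$\Big\|\sum_k x_k\Big\|_{L_q(\M)} = \Big\|\mathcal{E}_{\M\otimes 1}\Big(\sum_k (y_k - y_k')\Big)\Big\|_{L_q} \le \Big\|\sum_k (y_k - y_k')\Big\|_{L_q(\M\bar\otimes\M)}.$$

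Next I would exploit the symmetry of the differences $y_k - y_k'$. Because $y_k$ and $y_k'$ are identically distributed and independent, for any fixed choice of signs $\varepsilon_k \in \{\pm 1\}$ the sequence $\{\varepsilon_k(y_k - y_k')\}_k$ has the same joint $*$-distribution as $\{y_k - y_k'\}_k$, hence
$$\Big\|\sum_k (y_k - y_k')\Big\|_{L_q(\M\bar\otimes\M)} = \Big\|\sum_k \varepsilon_k (y_k - y_k')\Big\|_{L_q(\M\bar\otimes\M)}$$
for every such $\varepsilon$. Averaging the $q$-th powers over $\varepsilon$ with respect to the Rademacher variables $r_k$ on $(0,1)$, this quantity equals $\|\sum_k r_k\otimes(y_k - y_k')\|_{L_q(L_\infty(0,1)\bar\otimes\M\bar\otimes\M)}$. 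The triangle inequality then splits off the two copies:
$$\Big\|\sum_k r_k\otimes(y_k - y_k')\Big\|_{L_q} \le \Big\|\sum_k r_k\otimes y_k\Big\|_{L_q} + \Big\|\sum_k r_k\otimes y_k'\Big\|_{L_q} = 2\Big\|\sum_k x_k\otimes r_k\Big\|_{L_q(\M\bar\otimes L_\infty(0,1))},$$
where the last equality uses that $\{r_k\otimes y_k\}$ and $\{r_k\otimes y_k'\}$ each have the same distribution as $\{x_k\otimes r_k\}$ (the first copy of $\M$ inside $\M\bar\otimes\M$ carries the law of $x_k$, and tensoring with $L_\infty(0,1)$ and with a trivial factor does not change the singular value function). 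Combining the two displays gives the claimed bound with constant $2$.

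The point requiring the most care is the invariance of the joint distribution of $\{y_k - y_k'\}$ under sign changes, and the corresponding equalities of noncommutative $L_q$-norms: in the noncommutative setting "same distribution" must mean equality of all joint $*$-moments (equivalently, a trace-preserving $*$-isomorphism of the generated algebras intertwining the trace), and one must check that $q$-norms, $\mu$, and the sums in question are genuinely functions of that joint distribution — this is where Definition \ref{NC defi} (independence as $\tau(xy)=\tau(x)\tau(y)$) and the trace property of $\mathcal{E}_\mathcal{D}$ from the preliminaries do the work. For non-integer $q$ one should also note that everything reduces, via $\mu(\cdot)$ and $\tau(\Phi(\cdot))$ with $\Phi(t)=t^q$, to distributional invariants, so the symmetrization identity still applies. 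No subtle analytic estimate is needed; the content is entirely in setting up the symmetrization correctly in the operator framework.
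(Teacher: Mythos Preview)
Your symmetrization approach is different from the paper's, and the step you flag as ``requiring the most care'' is exactly where the argument breaks down. You claim that for any choice of signs $\varepsilon_k\in\{\pm1\}$ the family $\{\varepsilon_k(y_k-y_k')\}_k$ has the same joint $*$-distribution as $\{y_k-y_k'\}_k$, reasoning from the individual symmetry of each $y_k-y_k'$ together with independence. That inference is valid when independence determines the joint law from the marginals (classical or tensor independence), but Definition~\ref{NC defi} is far weaker: it only asserts $\tau(ab)=\tau(a)\tau(b)$ for $a\in\M_k$ and $b$ in the algebra generated by the remaining $\M_j$'s, and places no constraint on alternating moments such as $\tau(a_1 b_1 a_2 b_2)$. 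Concretely, flipping the sign of $y_j-y_j'$ alone amounts to swapping the two tensor factors of $\M\bar\otimes\M$ only on the subalgebra $\M_j\otimes\M_j$; the global flip does this for \emph{all} indices simultaneously, but a selective flip is not realised by any trace-preserving automorphism once the $\M_k$'s fail to commute inside $\M$ (already for two Pauli matrices or two free variables one has $\tau(y_1y_2y_1y_2)=\tau(x_1x_2x_1x_2)\neq\tau(x_1^2)\tau(x_2^2)=\tau(y_1y_2'y_1y_2')$, so the would-be swap does not preserve moments). Without that invariance, the averaging step and hence the passage to the Rademacher sum collapses.

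The paper's proof sidesteps this entirely: it quotes \cite[Lemma~1.2]{JX}, which gives directly
\[
\Big\|\sum_{k=m}^n x_k\Big\|_{L_q(\M)}\le 2\Big\|\sum_{k=m}^n \epsilon_k x_k\Big\|_{L_q(\M)}
\]
for \emph{every} fixed choice of signs, and then averages the $q$-th powers over $\epsilon$. The point is that this inequality needs no distributional symmetry at all---it follows from two conditional-expectation contractions: if $S=\{k:\epsilon_k=1\}$ and $\mathcal A_S$ is the algebra generated by $\{\M_k:k\in S\}$, then $\mathcal E_{\mathcal A_S}\big(\sum_k\epsilon_kx_k\big)=\sum_{k\in S}x_k$ (using mean zero and Definition~\ref{NC defi}), so $\|\sum_{k\in S}x_k\|_q\le\|\sum_k\epsilon_kx_k\|_q$, and similarly for $S^c$; the triangle inequality gives the factor $2$. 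If you want to rescue your symmetrization route, this conditional-expectation argument is what should replace the unjustified sign-flip equality.
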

\begin{proof} By \cite[Lemma 1.2]{JX}, for $1\leq q\leq \infty,$ we have
$$\Big\|\sum_{k=m}^nx_k\Big\|_{L_q(\M)}\leq 2\Big\|\sum_{k=m}^n\epsilon_k x_k\Big\|_{L_q(\M)}$$
for every choice of $\epsilon_k\in\{-1,1\}.$ Let $\Upsilon$ be the set of all sequences $\{\epsilon_k\}_{k=m}^n\subset\{-1,1\}.$ Observing that the cardinality of $\Upsilon$ is $2^{n-m+1},$ we obtain
\begin{equation*}
\Big\|\sum_{k=m}^nx_k\Big\|_{L_q(\M)}^q\leq 2^q\cdot 2^{m-n-1}\sum_{\epsilon\in\Upsilon}\Big\|\sum_{k=m}^n\epsilon_k x_k\Big\|_{L_q(\M)}^q=2^q\Big\|\sum_{k=m}^nx_k\otimes r_k\Big\|_{L_q(\M\bar{\otimes}L_\infty(0,1))}^q.
\end{equation*}

If the series on the right hand side of \eqref{rtkheq} converges in $L_q(\M\bar{\otimes}L_\infty(0,1)),$ then the partial sums of the series on the left hand side of \eqref{rtkheq} form a Cauchy sequence in $L_q(\M).$ Thus, the series on the left hand side of \eqref{rtkheq} also converges in $L_q(\M)$. Taking the limit in the last inequality as $n\to\infty$ and putting $m=0,$ we conclude the proof.
\end{proof}

The following lemma plays a crucial role in the proof of main results.

\begin{lem}\label{2^N lemma} Let $(\M,\tau)$ be a noncommutative probability space. If $\{x_k\}_{k\geq0}\subset L_{2^N}(\M)$ are independent mean zero random variables, then
$$\big\|\sum_{k\geq0} x_k\big\|_{L_{2^N}(\M)}\leq c_N\Big\|\sum_{k\geq0}x_k\otimes e_k\Big\|_{(L_{2^N}\cap L_2)(\M\bar{\otimes}\ell_\infty)},\quad N\in \mathbb{N},$$
where $c_N$ denotes a constant dependent only on $N.$
\end{lem}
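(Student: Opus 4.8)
The plan is to argue by induction on $N$. The base case $N=1$ says that $\|\sum_k x_k\|_{L_2(\M)} \leq c_1 \|\sum_k x_k\otimes e_k\|_{L_2(\M\bar\otimes\ell_\infty)}$; but by orthogonality of the mean zero independent $x_k$ in $L_2$ (for $j\neq k$, $\tau(x_j^* x_k) = \tau(x_j^*)\tau(x_k) = 0$, and similarly $\tau(x_k x_j^*)=0$) one has $\|\sum_k x_k\|_{L_2(\M)}^2 = \sum_k \|x_k\|_{L_2(\M)}^2 = \|\sum_k x_k\otimes e_k\|_{L_2(\M\bar\otimes\ell_\infty)}^2$, so $c_1 = 1$ works, and on the $(L_2\cap L_2)$-norm side this is the full statement. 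For the inductive step, I would pass from exponent $2^{N-1}$ to exponent $2^N$. The idea is to use Lemma \ref{reduction to kh} to replace $\sum_k x_k$ by the Rademacher sum $\sum_k x_k\otimes r_k$ at the cost of a factor $2$, and then exploit the fact that squaring a sum of $\pm$-symmetrized terms has a tractable structure: $(\sum_k x_k\otimes r_k)^* (\sum_k x_k \otimes r_k) = \sum_k x_k^* x_k \otimes 1 + \sum_{j\neq k} x_j^* x_k \otimes r_j r_k$, where the diagonal part is deterministic and the off-diagonal part is again a sum of \emph{independent} (after grouping) mean zero pieces of ``degree'' one lower, to which the inductive hypothesis applies in $L_{2^{N-1}}$.

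Concretely, after symmetrization I would estimate $\|\sum_k x_k\|_{L_{2^N}(\M)}^2 \leq 4\|\sum_k x_k\otimes r_k\|_{L_{2^N}(\M\bar\otimes L_\infty)}^2 = 4\|\,|\sum_k x_k\otimes r_k|^2\,\|_{L_{2^{N-1}}(\M\bar\otimes L_\infty)}$. Expanding the square and using the quasi-triangle inequality in $L_{2^{N-1}}$, the diagonal contribution $\|\sum_k x_k^* x_k\|_{L_{2^{N-1}}(\M)}$ is controlled directly (it is dominated by a combination of $\sum_k \|x_k\|_{2^N}^{2}$-type and $\sum_k\|x_k\|_2^2$-type quantities, both of which sit inside the $(L_{2^N}\cap L_2)$-norm of $\sum_k x_k\otimes e_k$ via the identity $\|\sum_k x_k\otimes e_k\|_{L_r(\M\bar\otimes\ell_\infty)}^r = \sum_k\|x_k\|_r^r$). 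For the off-diagonal term $\sum_{j\neq k} x_j^* x_k\otimes r_j r_k$, I would use that conditionally on the variables with larger index this is, after conditioning and summing, again a sum of independent mean zero random variables; applying Lemma \ref{reduction to kh} in reverse (or rather the induction hypothesis at level $2^{N-1}$ together with another symmetrization) bounds it by $\|\sum_k x_k\otimes e_k\|_{(L_{2^N}\cap L_2)}^2$ up to a constant depending only on $N$. Collecting the constants yields $c_N$ depending only on $N$; the final step back from arbitrary $x$ to self-adjoint $x$ is handled by splitting into real and imaginary parts as in the previous lemmas.

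The main obstacle I anticipate is the careful handling of the off-diagonal term $\sum_{j\neq k} x_j^* x_k\otimes r_j r_k$: it is not itself a sum of independent variables in any obvious way, so one must either introduce an auxiliary decomposition (e.g. condition on the algebra generated by $\{x_k : k \text{ in one half of the index set}\}$ and treat the two halves separately, recursing on the halves) or invoke a symmetrization/decoupling argument to reduce it to an independent sum of lower degree. Getting the bookkeeping of constants to remain a function of $N$ alone — rather than accumulating a dependence on the number of terms — is the delicate point, and it is exactly where the $(L_{2^N}\cap L_2)$-norm (as opposed to $L_{2^N}$ alone) on the right-hand side is essential, since the $L_2$-component absorbs the ``variance'' part generated by the diagonal.
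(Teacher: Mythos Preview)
Your inductive skeleton and the base case are correct, and they match the paper. The gap is in the inductive step: both of your proposed pieces --- the ``direct'' control of the diagonal and the treatment of the off-diagonal chaos --- do not work as stated, and the paper bypasses the second one entirely by invoking an external tool you have not used.

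For the diagonal term, the claim that $\big\|\sum_k x_k^*x_k\big\|_{L_{2^{N-1}}}$ is dominated by a combination of $\sum_k\|x_k\|_{2^N}^2$ and $\sum_k\|x_k\|_2^2$ is false in the direction you need: the triangle inequality gives only $\sum_k\|x_k\|_{2^N}^2$, and this quantity is \emph{not} controlled by $\big(\sum_k\|x_k\|_{2^N}^{2^N}\big)^{2/2^N}=\|\sum_k x_k\otimes e_k\|_{L_{2^N}}^2$ (take $n$ terms with $\|x_k\|_{2^N}=1$). The correct move --- and this is precisely what the paper does --- is to center: set $y_k=x_k^2-\tau(x_k^2)$, observe that the $y_k$ are again mean zero and independent, and apply the \emph{inductive hypothesis} at level $2^{N-1}$ to $\sum_k y_k$. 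The scalar remainder $\sum_k\tau(x_k^2)=\|\sum_k x_k\otimes e_k\|_{L_2}^2$ is absorbed by the $L_2$-part of the target norm, and $\|\sum_k y_k\otimes e_k\|_{L_{2^{N-1}}\cap L_2}\le 2\|\sum_k x_k^2\otimes e_k\|_{L_{2^{N-1}}\cap L_2}=2\|\sum_k x_k\otimes e_k\|_{L_{2^N}\cap L_4}^2\le 2\|\sum_k x_k\otimes e_k\|_{L_{2^N}\cap L_2}^2$.

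For the off-diagonal term $\sum_{j\neq k}x_j^*x_k\otimes r_jr_k$, your own caveat is accurate: this is a second-order noncommutative Rademacher chaos, and none of the devices you list (conditioning on half the indices, ``decoupling'', or a second application of Lemma~\ref{reduction to kh}) reduces it to a sum of independent mean zero variables with constants depending only on $N$. The paper avoids this term altogether: after Lemma~\ref{reduction to kh} it applies the noncommutative Khinchine inequality of Lust-Piquard in $L_{2^N}$, which gives directly
\[
\Big\|\sum_{k}x_k\otimes r_k\Big\|_{L_{2^N}(\M\bar\otimes L_\infty(0,1))}\le c_N'\Big\|\Big(\sum_k x_k^2\Big)^{1/2}\Big\|_{L_{2^N}(\M)}=c_N'\Big\|\sum_k x_k^2\Big\|_{L_{2^{N-1}}(\M)}^{1/2},
\]
so that only the ``diagonal'' quantity $\|\sum_k x_k^2\|_{L_{2^{N-1}}}$ remains, and the centering-plus-induction argument above finishes. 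In short, the missing idea is Lust-Piquard; once you insert it after the symmetrization step, the off-diagonal problem disappears and the rest of your outline (with the corrected handling of the diagonal) becomes the paper's proof.
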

\begin{proof} It follows from Definition \ref{NC defi} that independent mean zero random variables are pairwise orthogonal with respect to the usual inner product in $L_2(\mathcal{M}).$ Thus, for $N=1,$ the assertion obviously holds (with $c_1=1$). We prove the assertion by induction with respect to $N\in\mathbb{N}.$ By Lemma \ref{reduction to kh} and Khinchine inequality due to Lust-Piquard \cite{LP}, we have
$$\big\|\sum_{k\geq0}x_k\big\|_{L_{2^N}(\M)}\leq c_N'\Big\|\Big(\sum_{k\geq0}x_k^2\Big)^{\frac12}\Big\|_{L_{2^N}(\M)}=c_N'\Big\|\sum_{k\geq0}x_k^2\Big\|_{L_{2^{N-1}}(\M)}^{\frac12}.$$
Define mean zero random variables $y_k=x_k^2-\tau(x_k^2)\in L_{2^{N-1}}(\M).$ It follows directly from Definition \ref{NC defi} that so-constructed random variables are independent. Triangle inequality in the space $L_{2^{N-1}}(\M)$ yields
$$\big\|\sum_{k\geq0}x_k^2\big\|_{L_{2^{N-1}}(\M)}\leq\big\|\sum_{k\geq0}y_k\big\|_{L_{2^{N-1}}(\M)}+\big\|\sum_{k\geq0}x_k\otimes e_k\big\|_{L_2(\M\bar{\otimes}\ell_\infty)}^2.$$
Applying induction for the sequence $\{y_k\}_{k\geq0},$ we obtain that
$$\big\|\sum_{k\geq0}x_k^2\big\|_{L_{2^{N-1}}(\M)}\leq c_{N-1}\big\|\sum_{k\geq0}y_k\otimes e_k\big\|_{(L_{2^{N-1}}\cap L_2)(\M\bar{\otimes}\ell_\infty)}+\big\|\sum_{k\geq0}x_k\otimes e_k\big\|_{L_2(\M\bar{\otimes}\ell_\infty)}^2.$$
Now we estimate the first term on the right hand side of the above inequality. By the triangle inequality,
$$\big\|\sum_{k\geq0}y_k\otimes e_k\big\|_{L_{2^{N-1}}\cap L_2}\leq\big\|\sum_{k\geq0}x_k^2\otimes e_k\big\|_{L_{2^{N-1}}\cap L_2}+\big\|\sum_{k\geq0}\tau(x_k^2)\otimes e_k\big\|_{L_{2^{N-1}}\cap L_2}.$$
Noting that
$$\sum_{k\geq0}\tau(x_k^2)\otimes e_k\prec\prec \sum_{k\geq0}x_k^2\otimes e_k,$$
we have that
$$\big\|\sum_{k\geq0}\tau(x_k^2)\otimes e_k\big\|_{(L_{2^{N-1}}\cap L_2)(\M\bar{\otimes}\ell_\infty)}\leq \big\|\sum_{k\geq0}x_k^2\otimes e_k\big\|_{(L_{2^{N-1}}\cap L_2)(\M\bar{\otimes}\ell_\infty)}.$$
Consequently,
\begin{eqnarray*}
\big\|\sum_{k\geq0}y_k\otimes e_k\big\|_{(L_{2^{N-1}}\cap L_2)(\M\bar{\otimes}\ell_\infty)}&\leq& 2\big\|\sum_{k\geq0}x_k^2\otimes e_k\big\|_{(L_{2^{N-1}}\cap L_2)(\M\bar{\otimes}\ell_\infty)}\\
&=&2\big\|\sum_{k\geq0}x_k\otimes e_k\big\|_{(L_{2^N}\cap L_4)(\M\bar{\otimes}\ell_\infty)}^2\\
&\leq&2\big\|\sum_{k\geq0}x_k\otimes e_k\big\|_{(L_{2^N}\cap L_2)(\M\bar{\otimes}\ell_\infty)}^2,
\end{eqnarray*}
where the last inequality follows from the obvious inequality
$$\|Z\|_4\leq\max\{\|Z\|_2,\|Z\|_{2^N}\}=\|Z\|_{L_2\cap L_{2^N}}.$$
Thus,
$$\big\|\sum_{k\geq0}x_k^2\big\|_{L_{2^{N-1}}(\M)}\leq (2c_{N-1}+1)\big\|\sum_{k\geq0}x_k\otimes e_k\big\|_{(L_{2^N}\cap L_2)(\M\bar{\otimes}\ell_\infty)}^2.$$
and, hence,
$$\big\|\sum_{k\geq0}x_k\big\|_{L_{2^N}(\M)}\leq c_N'(2c_{N-1}+1)^{\frac12}\big\|\sum_{k\geq0}x_k\otimes e_k\big\|_{(L_{2^N}\cap L_2)(\M\bar{\otimes}\ell_\infty)}.$$
Setting $c_N=c_N'(2c_{N-1}+1)^{\frac12},$ we complete the induction process and conclude the proof.
\end{proof}
We are now in a position to prove Theorem \ref{js thm}.

\begin{proof}[of Theorem \ref{js thm}] {\bf Step 1:} Let $\mathcal{M}_k$ be the unital von Neumann subalgebras generated by mean zero independent random variables $x_k$ and $\mathcal{E}_k$ be the conditional expectation onto $\mathcal M_k.$ Since the random variables $x_k,$ $k\geq0,$ are independent, then so are the subalgebras $\mathcal{M}_k,$ $k\geq0.$ Consider the operator $T:(L_1+L_2)(\mathcal{M}\bar{\otimes}\ell_\infty)\to L_1(\mathcal{M})$ defined by the formula\footnote{We prove the convergence of the series in $L_1(\mathcal{M})$ in the next paragraph.}
\begin{equation}\label{t def}
T\big(\sum_{k\geq0}z_k\otimes e_k\big)\to \sum_{k\geq0}\mathcal{E}_k(z_k)-\tau(z_k).
\end{equation}
We claim that
\begin{equation}\label{t map1}
\|Tz\|_{L_1(\M)}\leq 2\|z\|_{(L_1+L_2)(\mathcal{M}\bar{\otimes}\ell_\infty)},\quad z\in (L_1+L_2)(\mathcal{M}\bar{\otimes}\ell_\infty)
\end{equation}
and
\begin{equation}\label{t map2}
\|Tz\|_{L_{2^N}(\M)}\leq 2c_N\|z\|_{(L_{2^N}\cap L_2)(\mathcal{M}\bar{\otimes}\ell_\infty)},\quad z\in (L_{2^N}\cap L_2)(\mathcal{M}\bar{\otimes}\ell_\infty).
\end{equation}

First, we prove that the operator $T$ is well defined and also establish \eqref{t map1}. Fix $z\in (L_1+L_2)(\mathcal{M}\bar{\otimes}\ell_\infty)$ and $\varepsilon>0.$ Choose $a\in L_1(\mathcal{M}\bar{\otimes}\ell_\infty)$ and $b\in L_2(\mathcal{M}\bar{\otimes}\ell_\infty)$ such that $z=a+b$ and such that $$\|a\|_{L_1(\mathcal{M}\bar{\otimes}\ell_\infty)}+\|b\|_{L_2(\mathcal{M}\bar{\otimes}\ell_\infty)}
\leq\|z\|_{(L_1+L_2)(\mathcal{M}\bar{\otimes}\ell_\infty)}+\varepsilon.$$
It is immediate that
$$\big\|\sum_{k\geq0}\mathcal{E}_k(a_k)-\tau(a_k)\big\|_{L_1(\M)}\leq 2\sum_{k\geq0}\|a_k\|_{L_1(\M)}=2\|a\|_{L_1(\mathcal{M}\bar{\otimes}\ell_\infty)}.$$
Writing $b=\sum_{k\geq0}b_k\otimes e_k$ and noting that $\mathcal{E}_k(b_k)-\tau(b_k),$ $k\geq0,$ are mean zero independent random variables, we infer that
\begin{eqnarray*}
\big\|\sum_{k\geq0}\mathcal{E}_k(b_k)-\tau(b_k)\big\|_{L_1(\M)}&\leq&\big\|\sum_{k\geq0}\mathcal{E}_k(b_k)-\tau(b_k)\big\|_{L_2(\M)}\\
&=&\Big(\sum_{k\geq0}\big\|\mathcal{E}_k(b_k)-\tau(b_k)\big\|^2_{L_2(\M)}\Big)^{1/2}\\
&=&\Big\|\sum_{k\geq0}\big(\mathcal{E}_k(b_k)-\tau(b_k)\big)\otimes e_k\Big\|_{L_2(\mathcal{M}\bar{\otimes}\ell_\infty)}\\
&\leq&2\big\|\sum_{k\geq0} b_k\otimes e_k\big\|_{L_2(\mathcal{M}\bar{\otimes}\ell_\infty)},
\end{eqnarray*}
where the series on the left hand sides converges in $L_1(\mathcal{M}).$ It follows from the two preceding inequalities that
$$\big\|\sum_{k\geq0}\mathcal{E}_k(z_k)-\tau(z_k)\big\|_{L_1(\M)}\leq 2\big\|a\big\|_{L_1(\mathcal{M}\bar{\otimes}\ell_\infty)}+2\big\|b\big\|_{L_2(\mathcal{M}\bar{\otimes}\ell_\infty)}$$
and so the series on the left hand sides converges in $L_1(\mathcal{M}).$ Thus,
$$\big\|T\big(\sum_{k\geq0} z_k\otimes e_k\big)\big\|_{L_1(\mathcal{M})}\leq 2\big\|z\big\|_{(L_1+L_2)(\mathcal{M}\bar{\otimes}\ell_\infty)}+2\varepsilon.$$
Since $\varepsilon>0$ is arbitrarily small, \eqref{t map1} follows.

We now establish \eqref{t map2}. By Lemma \ref{2^N lemma}, we have
$$\|T(z)\|_{L_{2^N}(\M)}\leq c_N\Big\|\sum_{k\geq0}\big(\mathcal{E}_k(z_k)-\tau(z_k)\big)\otimes e_k\Big\|_{(L_{2^N}\cap L_2)(\mathcal{M}\bar{\otimes}\ell_\infty)}.$$
Note that every conditional expectation operator is a contraction on $L_1(\M)$ and on $L_\infty(\M).$ It is immediate that
$$\sum_{k\geq0}(\mathcal{E}_k(z_k)-\tau(z_k))\otimes e_k\prec\prec 2\sum_{k\geq0}z_k\otimes e_k.$$
Since $L_{2^N}\cap L_2$ is fully symmetric, it follows that
$$\|T(z)\|_{L_{2^N}(\M)}\leq 2c_N\big\|\sum_{k\geq0} z_k\otimes e_k\big\|_{(L_{2^N}\cap L_2)(\mathcal{M}\bar{\otimes}\ell_\infty)}.$$
This proves \eqref{t map2}.

{\bf Step 2:} By Step 1, the operator $T$ defined by the formula \eqref{t def} boundedly acts from $(L_1+L_2)(\mathcal{M}\bar{\otimes}\ell_\infty)$ into $L_1(\M)$ and from $(L_{2^N}\cap L_2)(\mathcal{M}\bar{\otimes}\ell_\infty)$ into $L_{2^N}(\M).$ Note that (see \eqref{intersect banach dual} and \eqref{sum banach dual})
$$(L_1+L_2)(\mathcal{M}\bar{\otimes}\ell_\infty)^*=(L_{\infty}\cap L_2)(\mathcal{M}\bar{\otimes}\ell_\infty),$$
and
$$(L_{2^N}\cap L_2)(\mathcal{M}\bar{\otimes}\ell_\infty)^*=(L_{\frac{2^N}{2^N-1}}+L_2)(\mathcal{M}\bar{\otimes}\ell_\infty).$$
Since $T:X\to Y$ implies $T^*:Y^*\to X^*$ and $\|T^*\|=\|T\|$,  by taking the adjoints, we have
\begin{equation}\label{tstar map1}
T^*:L_{\infty}(\mathcal{M})\to (L_{\infty}\cap L_2)(\mathcal{M}\bar{\otimes}\ell_\infty),
\end{equation}
and
\begin{equation}\label{tstar map2}
T^*:L_{\frac{2^N}{2^N-1}}(\mathcal{M})\to (L_{\frac{2^N}{2^N-1}}+L_2)(\mathcal{M}\bar{\otimes}\ell_\infty).
\end{equation}
are both bounded. There is no ambiguity in denoting these operators with the same letter because that they both have the form
\begin{equation}\label{tstar def}
T^*:w\to\sum_{k\geq0}\big(\mathcal{E}_k(w)-\tau(w)\big)\otimes e_k,
\end{equation}
where $w\in L_{\infty}(\mathcal{M})$ or, respectively, $w\in L_{\frac{2^N}{2^N-1}}(\mathcal{M}).$ To see the latter, for $z\in (L_1+L_2)(\mathcal{M}\bar{\otimes}\ell_\infty)$ and $w\in L_{\infty}(\mathcal{M})$ or $z\in (L_{2^N}\cap L_2)(\mathcal{M}\bar{\otimes}\ell_\infty)$ and $w\in L_{\frac{2^N}{2^N-1}}(\mathcal{M}),$ we have
$$\tau\big(\mathcal E_k(z_k)w\big)=\tau\big(\mathcal E_k(z_k)\mathcal E_k(w)\big)=\tau\big(z_k\mathcal{E}_k(w)\big)$$
and, therefore,
\begin{eqnarray*}
\tau\big(T(z)w\big)&=&\sum_{k\geq0}\tau\big(z_k\mathcal{E}_k(w)\big)-\tau(z_k)\tau(w)
=\sum_{k\geq0}\tau\Big(z_k\big(\mathcal{E}_k(w)-\tau(w)\big)\Big)
\\&=&(\tau\otimes\Sigma)\Big(\big(\sum_{k\geq0}z_k\otimes e_k\big)\cdot\Big(\sum_{k\geq0}\big(\mathcal{E}_k(w)-\tau(w)\big)\otimes e_k\Big)\Big).
\end{eqnarray*}
This shows \eqref{tstar def}.

{\bf Step 3:} Let $E\in{\rm Int}(L_1,L_q),$ $1\leq q<\infty.$ It follows\footnote{Let $2^N>q.$ If $W:L_1\to L_1$ and $W:L_{2^N}\to L_{2^N}$ are bounded linear operators, then $W:L_q\to L_q$ is also bounded because $L_q\in{\rm Int}(L_1,L_{2^N}).$ Since $E\in{\rm Int}(L_1,L_q),$ it follows that $W:E\to E$ is also bounded.} that $E\in {\rm Int}(L_1,L_{2^N})$ for some $N\in\mathbb{Z}^+.$ By Step 1 (see \eqref{t map1} and \eqref{t map2}) and Lemma \ref{first interpolation lemma}, we have that $T:Z_E^2(\mathcal{M}\bar{\otimes}\ell_\infty)\to E(\mathcal{M}).$ 

Recall that $x_k\in E(\M_k),$ $k\geq0,$ are independent mean zero random variables. It follows that $\mathcal{E}_k(x_k)-\tau(x_k)=x_k,$ $k\geq0.$ Thus,
$$T(\sum_{k\geq0}x_k\otimes e_k)=\sum_{k\geq0}x_k.$$
Applying the operator $T$ to the element $\sum_{k\geq0}x_k\otimes e_k,$ we conclude the proof of \eqref{maina}.
Let $E\in{\rm Int}(L_p,L_{\infty}),$ $1<p\leq\infty.$ It follows that $E\in{\rm Int}(L_{\frac{2^N}{2^N-1}},L_{\infty})$ for some $N\in\mathbb{N}.$ By Step 2 (see \eqref{tstar map1} and \eqref{tstar map2}) and Lemma \ref{second interpolation lemma}, we have that $T^*:E(\mathcal{M})\to Z_E^2(\mathcal{M}\bar{\otimes}\ell_\infty).$ We have
$$T^*(\sum_{k\geq0}x_k)=\sum_{k\geq0}x_k\otimes e_k.$$
Applying the operator $T^*$ to the element $\sum_{k\geq0}x_k,$ we conclude the proof of \eqref{mainb}. The claim in \eqref{mainc} is a trivial corollary of those in \eqref{maina} and \eqref{mainb}.

The proof of Theorem \ref{js thm} is complete.
\end{proof}

Applying Theorem \ref{js thm}, we easily establish Corollary \ref{js positive}.

\begin{proof}[of the Corollary \ref{js positive}] Clearly, the sequence $\{x_k-\tau(x_k)\}_{k\geq0}$ satisfies the assumptions of Theorem \ref{js thm}. Let $E\in{\rm Int}(L_1,L_q),$ $1\leq q<\infty.$ For simplicity of notations, let us assume that $\|1\|_E=1.$ It is clear from the triangle inequality in the space $E(\M)$ that
$$\|\sum_{k\geq0}x_k\|_{E(\M)}\leq\|\sum_{k\geq0}x_k-\tau(x_k)\|_{E(\M)}+\sum_{k\geq0}\tau(x_k).$$
Applying Theorem \ref{js thm} to the first term on the right hand side, we obtain that
$$\|\sum_{k\geq0}x_k\|_{E(\M)}\lesssim_E\Big\|\sum_{k\geq0}\big(x_k-\tau(x_k)\big)\otimes e_k\Big\|_{Z_E^2(\mathcal{M}\bar{\otimes}\ell_\infty)}+\big\|\sum_{k\geq0}x_k\otimes e_k\big\|_{L_1(\mathcal{M}\bar{\otimes}\ell_\infty)}.$$
Using triangle inequality in the space $Z_E^2(\mathcal{M}\bar{\otimes}\ell_\infty),$ we obtain that
$$\Big\|\sum_{k\geq0}\big(x_k-\tau(x_k)\big)\otimes e_k\Big\|_{Z_E^2(\mathcal{M}\bar{\otimes}\ell_\infty)}\leq 2\|\sum_{k\geq0}x_k\otimes e_k\|_{Z_E^2(\mathcal{M}\bar{\otimes}\ell_\infty)}\lesssim\|\sum_{k\geq0}x_k\otimes e_k\|_{Z_E^1(\mathcal{M}\bar{\otimes}\ell_\infty)}.$$
Consequently,
\begin{eqnarray*}
\|\sum_{k\geq0}x_k\|_{E(\M)}&\lesssim_E&\|\sum_{k\geq0}x_k\otimes e_k\|_{Z_E^1(\mathcal{M}\bar{\otimes}\ell_\infty)}+\big\|\sum_{k\geq0}x_k\otimes e_k\big\|_{L_1(\mathcal{M}\bar{\otimes}\ell_\infty)}\\
&\leq&2\|\sum_{k\geq0}x_k\otimes e_k\|_{Z_E^1(\mathcal{M}\bar{\otimes}\ell_\infty)}.
\end{eqnarray*}
This proves \eqref{cora}.

To see \eqref{corb}, suppose first that the series we consider contain only finitely many non-zero summands. Let $X=\sum_{k\geq0}x_k\otimes e_k.$ Set $x_{1k}=x_ke_{(\mu(1,X),\infty)}(x_k).$ For every $k\geq0,$ we have $x_k\leq x_{1k}+\mu(1,X)$ and, therefore,
$$\mu(X)\leq\mu(\sum_{k\geq0}x_{1k}\otimes e_k+\sum_{k\geq0}\mu(1,X)\otimes e_k)=\mu\Big(\sum_{k\geq0}x_{1k}\otimes e_k\Big)+\mu(1,X).$$
Thus,
$$\mu(X)\chi_{(0,1)}\leq \mu\Big(\sum_{k\geq0}x_{1k}\otimes e_k\Big)+\mu(1,X)\chi_{(0,1)}.$$
It follows from \cite[Lemma 36]{SZfreeK} that
$$\sum_{k\geq0}x_{1k}\otimes e_k\lhd \sum_{k\geq0}x_{1k}.$$
By \cite[Corollary 3.4.3]{LSZ}, we have
$$\big\|\sum_{k\geq0}x_{1k}\otimes e_k\big\|_{E(\mathcal{M}\bar{\otimes}\ell_\infty)}\leq \big\|\sum_{k\geq0}x_{1k}\big\|_{E(\M)}.$$
Therefore,
$$\|\mu(X)\chi_{(0,1)}\|_E\leq\|\sum_{k\geq0}x_{1k}\|_{E(\M)}+\mu(1,X)\leq\|\sum_{k\geq0}x_{1k}\|_{E(\M)}+\|X\|_1.$$
Given the obvious inequalities
$$\|\sum_{k\geq0}x_{1k}\|_{E(\M)}\leq \|\sum_{k\geq0}x_{k}\|_{E(\M)}\quad {\rm and}\quad\|X\|_1=\|\sum_{k\geq0}x_k\|_{L_1(\M)}\leq\|\sum_{k\geq0}x_k\|_{E(\M)},$$
we have
$$\|\sum_{k\geq0}x_k\otimes e_k\|_{Z_E^1(\mathcal{M}\bar{\otimes}\ell_\infty)}\leq \|\mu(X)\chi_{(0,1)}\|_E+ \|X\|_1\leq3\|\sum_{k\geq0}x_k\|_{E(\M)}.$$
This proves \eqref{corb} for the case of finitely many summands.

Let us now have infinitely many summands in our series. If $n,m\in\mathbb{Z}_+,$ then it follows from the above paragraph that
$$\|\sum_{k=n}^mx_k\otimes e_k\|_{Z_E^1(\mathcal{M}\bar{\otimes}\ell_\infty)}\leq 3\|\sum_{k=n}^mx_k\|_{E(\M)}.$$
Since the series $\sum_{k\geq0}x_k$ converges in $E(\M),$ it follows that its partial sums form a Cauchy sequence in $E(\M).$ Thus, partial sums of the series $\sum_{k\geq0}x_k\otimes e_k$ form a Cauchy sequence in $Z_E^1(\mathcal{M}\bar{\otimes}\ell_\infty).$ Passing $m\to\infty$ and letting $n=0,$ we conclude the proof of \eqref{corb}.

\eqref{corc} immediately follows from the combination of \eqref{cora} and \eqref{corb}. The proof of Corollary \ref{js positive} is complete.
\end{proof}

\begin{rmk}
It is surprising that the proof of Corollary \ref{js positive} (\ref{corb}) does not use the independence assumption. If $\M$ is a commutative von Neumann algebra, then Corollary \ref{js positive} (\ref{corb}) significantly extends the left hand side inequality in \cite[Theorem 1, (4)]{JS} (for the Banach space case).
\end{rmk}

\section{Khinchine inequality for symmetric spaces}\label{kh section}

In this section, we provide the proof of the noncommutative Khinchine inequality for symmetric spaces.

\begin{lem}\label{l bound} Define the operator $L:L_q(\mathcal{M}\bar{\otimes}\mathcal{L}(\ell_2))\to L_q(\mathcal{M}\bar{\otimes}\ell_{\infty}),$ $q\geq 2,$ by the formula
$$Lz=\sum_{k\geq0}z_{k,0}\otimes e_k,\quad z\in L_q(\mathcal{M}\bar{\otimes}\mathcal{L}(\ell_2)).$$
We have $\|L\|_{L_q\to L_q}\leq 1,$ $q\geq 2.$
\end{lem}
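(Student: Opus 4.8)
The plan is to reduce the inequality to the two endpoint cases $q=2$ and $q=\infty$ and then interpolate. The first step is to pin down the norm on the target space: for $w=\sum_{k\geq0}w_k\otimes e_k$ affiliated with $\mathcal{M}\bar{\otimes}\ell_\infty$ one has $w^*w=\sum_{k\geq0}w_k^*w_k\otimes e_k$ (the $e_k$ being orthogonal), hence $|w|^q=\sum_{k\geq0}|w_k|^q\otimes e_k$, and since $\Sigma(e_k)=1$,
$$\|w\|_{L_q(\mathcal{M}\bar{\otimes}\ell_\infty)}^q=(\tau\otimes\Sigma)(|w|^q)=\sum_{k\geq0}\tau(|w_k|^q)=\sum_{k\geq0}\|w_k\|_{L_q(\mathcal{M})}^q,\qquad \|w\|_{L_\infty}=\sup_{k\geq0}\|w_k\|_{\mathcal{M}}.$$
Applying this with $w_k=z_{k,0}$ yields $\|Lz\|_q^q=\sum_{k\geq0}\|z_{k,0}\|_{L_q(\mathcal{M})}^q$. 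I would also record the identity $z_{k,0}\otimes e_{0,0}=(1\otimes e_{0,k})\,z\,(1\otimes e_{0,0})$, which together with the contractivity of left and right multiplication by contractions on $L_q$ gives $\|z_{k,0}\|_{L_q(\mathcal{M})}\leq\|z\|_{L_q(\mathcal{M}\bar{\otimes}\mathcal{L}(\ell_2))}$ for every $k$.

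For $q=\infty$ this immediately gives $\|Lz\|_\infty=\sup_k\|z_{k,0}\|_{\mathcal{M}}\leq\|z\|_\infty$, i.e. $\|L\|_{L_\infty\to L_\infty}\leq1$. For $q=2$, orthogonality of the matrix units gives $\|z\|_2^2=\sum_{k,l\geq0}\|z_{k,l}\|_{L_2(\mathcal{M})}^2$, so $\|Lz\|_2^2=\sum_{k\geq0}\|z_{k,0}\|_2^2\leq\|z\|_2^2$ and $\|L\|_{L_2\to L_2}\leq1$. Since $L$ acts compatibly on the couple (it agrees with itself on $L_2\cap L_\infty$ in both source and target) and since $[L_2(\mathcal{N}),L_\infty(\mathcal{N})]_\theta=L_q(\mathcal{N})$ isometrically for any semifinite von Neumann algebra $\mathcal{N}$ whenever $\tfrac1q=\tfrac{1-\theta}{2}$, the exactness of the complex interpolation functor yields
$$\|L\|_{L_q(\mathcal{M}\bar{\otimes}\mathcal{L}(\ell_2))\to L_q(\mathcal{M}\bar{\otimes}\ell_\infty)}\leq\|L\|_{L_2\to L_2}^{1-\theta}\,\|L\|_{L_\infty\to L_\infty}^{\theta}\leq1,\qquad q\geq2.$$

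There is no genuinely hard step here; the only points needing care are the correct identification of the $L_q$-norm on $\mathcal{M}\bar{\otimes}\ell_\infty$ as an $\ell_q$-sum and the (well-known) fact that the row-to-diagonal ``rotation'' implicit in $L$ is contractive precisely in the range $q\geq2$. If one prefers to avoid complex interpolation, the same conclusion can be reached directly: writing $b_k=z_{k,0}^*z_{k,0}\geq0$ and $S=\sum_{k\geq0}b_k=(z(1\otimes e_{0,0}))^*(z(1\otimes e_{0,0}))$, one has $\|Lz\|_q^q=\sum_{k\geq0}\tau(b_k^{q/2})$ while $\|z(1\otimes e_{0,0})\|_q^q=\tau(S^{q/2})\leq\|z\|_q^q$, so it suffices to prove $\sum_{k\geq0}\tau(b_k^{r})\leq\tau(S^{r})$ for $r=q/2\geq1$; this follows by reducing via approximation to invertible $S$, setting $c_k=S^{-1/2}b_kS^{-1/2}$ so that $\sum_{k\geq0}c_k=1$, applying the Araki--Lieb--Thirring inequality $\tau((S^{1/2}c_kS^{1/2})^r)\leq\tau(S^{r/2}c_k^rS^{r/2})=\tau(S^rc_k^r)$, and using $c_k^r\leq c_k$ together with $\sum_{k\geq0}\tau(S^rc_k)=\tau(S^r)$.
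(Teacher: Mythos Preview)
Your proof is correct. Both the interpolation argument and the Araki--Lieb--Thirring alternative go through as written; the only mild caveat is that the isometric identification $[L_2(\mathcal{N}),L_\infty(\mathcal{N})]_\theta=L_q(\mathcal{N})$ for semifinite $\mathcal{N}$ deserves a reference, but it is standard.

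The paper takes a different and somewhat more direct route. Rather than interpolating, it proves the pointwise submajorization $|Lz|^2\prec\prec|z|^2$ as follows: one computes $|Lz|^2=\sum_{k\geq0}|z_{k,0}|^2\otimes e_k$ and $|z\cdot(1\otimes e_{0,0})|^2=\bigl(\sum_{k\geq0}|z_{k,0}|^2\bigr)\otimes e_{0,0}$, and then invokes the elementary submajorization $\sum_k a_k\otimes e_k\prec\prec\sum_k a_k$ for positive $a_k$ (this is \cite[Lemma 3.3.7]{LSZ}). Since $q/2\geq1$, the map $t\mapsto t^{q/2}$ is convex and $|Lz|^2\prec\prec|z|^2$ yields $\|Lz\|_q\leq\|z\|_q$ directly. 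Your interpolation approach is arguably more familiar and avoids citing the submajorization lemma; the paper's approach is self-contained once that lemma is granted and, as a bonus, produces the stronger intermediate statement $|Lz|^2\prec\prec|z|^2$, which immediately gives the contraction on \emph{any} fully symmetric space sitting between $L_2$ and $L_\infty$, not just on $L_q$. Your ALT alternative is essentially a hands-on verification of exactly that submajorization in the specific power case.
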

\begin{proof} We claim that $|Lz|^2\prec\prec |z|^2,$ $z\in L_q(\mathcal{M}\bar{\otimes}\mathcal{L}(\ell_2)).$ To see the claim, note that
$$|z\cdot(1\otimes e_{0,0})|^2=\big(\sum_{k\geq0}|z_{k,0}|^2\big)\otimes e_{00},\quad |Lz|^2=\sum_{k\geq0}|z_{k,0}|^2\otimes e_k.$$
It follows from \cite[Lemma 3.3.7]{LSZ} that
$$|Lz|^2=\sum_{k\geq0}|z_{k,0}|^2\otimes e_k\prec\prec\sum_{k\geq0}|z_{k,0}|^2=|z\cdot(1\otimes e_{0,0})|^2\prec\prec|z|^2.$$
This proves the claim.

Since $q\geq2,$ it follows that
$$\|Lz\|_{L_q(\mathcal{M}\bar{\otimes}\ell_{\infty})}\leq \|z\cdot(1\otimes e_{0,0})\|_{L_q(\mathcal{M}\bar{\otimes}\mathcal{L}(\ell_2))}\leq \|z\|_{L_q(\mathcal{M}\bar{\otimes}\mathcal{L}(\ell_2))}.$$
Thus, $L$ is bounded.
\end{proof}

\begin{proof}[of Theorem \ref{khinchine}] Let $\mathcal{M}_k$ be the unital von Neumann subalgebras generated by independent mean zero random random variables $x_k$ and let $\mathcal{E}_k$ be the conditional expectation onto $\mathcal M_k.$ Since the random variables $x_k,$ $k\geq0,$ are independent, then so are the subalgebras $\mathcal{M}_k,$ $k\geq0.$

By assumption, our space $E\in{\rm Int}(L_p,L_q),$ $1<p\leq q<\infty.$ Without loss of generality, $1<p\leq 2,$ $2\leq q<\infty$ and $\frac1p+\frac1q=1.$ That is, $p$ and $q$ are fixed during the proof.

{\bf Step 1:} Let $\{e_{ij}\}_{i,j\geq0}$ denote the standard basis of $\mathcal{L}(\ell_2).$ Define the operator $S:L_q(\mathcal{M})\to L_q(\mathcal{M}\bar{\otimes}\mathcal{L}(\ell_2))$ by the formula
\begin{equation}\label{s def}
S:z\to\sum_{k\geq0}\big(\mathcal{E}_k(z)-\tau(z)\big)\otimes e_{k,0},\quad z\in L_q(\mathcal{M}).
\end{equation}
We claim that $S:L_q(\mathcal{M})\to L_q(\mathcal{M}\bar{\otimes}\mathcal{L}(\ell_2))$ is bounded. To this end, note that
$$|Sz|=\Big(\sum_{k\geq0}|\mathcal{E}_k(z)-\tau(z)|^2\Big)^{1/2}\otimes e_{0,0},$$
and, hence,
$$\|Sz\|_{L_q(\mathcal{M}\bar{\otimes}\mathcal{L}(\ell_2))}=\Big\|\sum_{k\geq0}|\mathcal{E}_k(z)-\tau(z)|^2\Big\|^{1/2}_{L_\frac{q}{2}(\M)}.$$
Clearly, the random variables $\mathcal{E}_k(z)-\tau(z)\in L_q(\mathcal{M}),$ $k\geq0,$ are independent and mean zero. Since $2\leq q<\infty,$ it follows from the Corollary \ref{js positive} \eqref{cora} that
\begin{eqnarray*}
\|Sz\|_{L_q(\mathcal{M}\bar{\otimes}\mathcal{L}(\ell_2))}&\lesssim_q&\Big\|\sum_{k\geq0}|\mathcal{E}_k(z)-\tau(z)|^2\otimes e_k\Big\|^{1/2}_{Z_{L_{\frac{q}{2}}}^1(\mathcal{M}\bar{\otimes}\ell_\infty)}\\
&\approx&\Big\|\sum_{k\geq0}(\mathcal{E}_k(z)-\tau(z))\otimes e_k\Big\|_{Z_{L_q}^2(\mathcal{M}\bar{\otimes}\ell_\infty)}=\|T^*z\|_{Z_{L_q}^2(\mathcal{M}\bar{\otimes}\ell_\infty)},
\end{eqnarray*}
where $T^*$ is the operator introduced in (Step 2 of) the proof of Theorem \ref{js thm}. Since $q>1,$ it was established there that $T^*:L_q(\mathcal{M})\to Z_{L_q}^2(\mathcal{M}\bar{\otimes}\ell_\infty)$ is bounded. Therefore, we have
$$\|Sz\|_{L_q(\mathcal{M}\bar{\otimes}\mathcal{L}(\ell_2))}\lesssim_q\|T^*\|_{L_q\to Z_{L_q}^2}\|x\|_{L_q(\mathcal{M})}.$$

{\bf Step 2:} Using Step 1 and taking adjoints, we infer that the operator $S^*:L_p(\mathcal{M}\bar{\otimes}\mathcal{L}(\ell_2))\to L_p(\mathcal{M})$ is bounded. Similarly to \eqref{tstar def}, a simple computation shows that
\begin{equation}\label{sstar def}
S^*z=\sum_{k\geq0}\mathcal{E}_k(z_{k,0})-\tau(z_{k,0}),\quad z\in L_p(\mathcal{M}\bar{\otimes}\mathcal{L}(\ell_2)).
\end{equation}
We claim that the operator $S^*:L_q(\mathcal{M}\bar{\otimes}\mathcal{L}(\ell_2))\to L_q(\mathcal{M})$ given by the formula \eqref{sstar def} is also bounded. Again taking the adjoints and arguing as in \eqref{tstar def}, we would infer that the operator $S:L_p(\mathcal{M})\to L_p(\mathcal{M}\bar{\otimes}\mathcal{L}(\ell_2)),$ given by the formula \eqref{s def} is also bounded.

Clearly, the random variables $\mathcal{E}_k(z_{k,0})-\tau(z_{k,0})\in L_q(\mathcal{M}),$ $k\geq0,$ are independent and mean zero. It follows from Theorem \ref{js thm} that
$$\|S^*z\|_{L_q(\mathcal{M})}\approx\|\sum_{k\geq0}(\mathcal{E}_k(z_{k,0})-\tau(z_{k,0}))\otimes e_k\|_{Z_{L_q}^2(\mathcal{M}\bar{\otimes}\ell_{\infty})}.$$
However,
$$\sum_{k\geq0}(\mathcal{E}_k(z_{k,0})-\tau(z_{k,0}))\otimes e_k\prec\prec 2\sum_{k\geq0}z_{k,0}\otimes e_k=2Lz=2L(z\cdot (1\otimes e_{0,0})).$$
Since $L:L_q(\mathcal{M}\bar{\otimes}\mathcal{L}(\ell_2))\to L_q(\mathcal{M}\bar{\otimes}\ell_{\infty})$ is bounded, it follows from Lemma \ref{l bound} that
$$\|S^*z\|_q\lesssim\max\{\|L(z\cdot(1\otimes e_{0,0}))\|_q,\|L(z\cdot(1\otimes e_{0,0}))\|_2\}\leq$$
$$\leq\max\{\|z\cdot(1\otimes e_{0,0})\|_q,\|z\cdot(1\otimes e_{0,0})\|_2\}=\|z\cdot(1\otimes e_{0,0})\|_q\leq\|z\|_q.$$

{\bf Step 3:} It is established in Step 1 that $S:L_q(\mathcal{M})\to L_q(\mathcal{M}\bar{\otimes}\mathcal{L}(\ell_2))$ is bounded; it is established in Step 2 that $S:L_p(\mathcal{M})\to L_p(\mathcal{M}\bar{\otimes}\mathcal{L}(\ell_2))$ is bounded. Using Lemma \ref{third interpolation lemma}, we infer that $S:E(\mathcal{M})\to Z_E^{\infty}(\mathcal{M}\bar{\otimes}\mathcal{L}(\ell_2))$ is bounded. Since our random variables $x_k,$ $k\geq0,$ generate the respective algebras $\M_k,$ $k\geq0,$ and are mean zero, it follows from the definition of the operator $S$ that
$$S(\sum_{k\geq0}x_k)=\sum_{k\geq0}x_k\otimes e_{k,0}.$$
Applying the operator $S$ to the element $\sum_{k\geq0}x_k,$ we arrive at
$$\Big\|\Big(\sum_{k\geq0}x_k^2\Big)^{\frac12}\Big\|_{E(\mathcal{M})}=\|\sum_{k\geq0}x_k\otimes e_{k,0}\|_{Z_E^{\infty}(\mathcal{M}\bar{\otimes}\mathcal{L}(\ell_2))}\lesssim_E\|\sum_{k\geq0}x_k\|_{E(\mathcal{M})}.$$

It is established in Step 2 that $S^*:L_p(\mathcal{M}\bar{\otimes}\mathcal{L}(\ell_2))\to L_p(\mathcal{M})$ and $S^*:L_q(\mathcal{M}\bar{\otimes}\mathcal{L}(\ell_2))\to L_q(\mathcal{M})$ are bounded. Using Lemma \ref{fourth interpolation lemma}, we infer that $S^*:Z_E^1(\mathcal{M}\bar{\otimes}\mathcal{L}(\ell_2))\to E(\mathcal{M})$ is also bounded. We have
$$S^*(\sum_{k\geq0}x_k\otimes e_{k,0})=\sum_{k\geq0}x_k.$$
Applying the operator $S^*$ to the element $\sum_{k\geq0}x_k\otimes e_{k,0},$ we arrive at
$$\|\sum_{k\geq0}x_k\|_{E(\mathcal{M})}\lesssim_E\|\sum_{k\geq0}x_k\otimes e_{k,0}\|_{Z_E^1(\mathcal{M}\bar{\otimes}\mathcal{L}(\ell_2))}=\Big\|\Big(\sum_{k\geq0}x_k^2\Big)^{\frac12}\Big\|_{E(\mathcal{M})}.$$
This proves the theorem.
\end{proof}

\section{Johnson-Schechtman and Khinchine inequalities for modulars}\label{modular section}

In this section, we prove the noncommutative Johnson-Schechtman and Khinchine inequalities associated with an Orlicz function $\Phi.$ We need interpolation lemmas for $\Phi$-moments similar to those given in Section \ref{interpol section}. The proof below have subtle differences with those in Section \ref{interpol section}.

Let $1\leq p\leq q<\infty.$ We introduce the Orlicz function
\begin{equation}\label{M}
M_{p,q}(t)=
\begin{cases}
pt^q,& t\in[0,1)\\
qt^p+p-q,&  t\in[1,\infty).
\end{cases}
\end{equation}
It is clear that $M_{p,q}$ is a $p$-convex and $q$-concave Orlicz function.

\begin{lem}\label{m simple est} Let $(\mathcal{M}_1,\tau_1)$ and $(\mathcal{M}_2,\tau_2)$ be semifinite von Neumann algebras. If $W:L_p(\mathcal{M}_1)\to L_p(\mathcal{M}_2)$ and $W:L_q(\mathcal{M}_1)\to L_q(\mathcal{M}_2)$ are linear contractions, then
$$\tau_2\Big(M_{p,q}(|Wx|)\Big)\lesssim_{p,q}\tau_1\Big(M_{p,q}(|x|)\Big),\quad x\in L_{M_{p,q}}(\M_1).$$
\end{lem}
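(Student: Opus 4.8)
The plan is to exploit the explicit two-piece structure of $M_{p,q}$ in \eqref{M} and reduce everything to the already-available boundedness of $W$ on $L_p$ and $L_q$. First I would split $x$ (assume $x=x^*\ge 0$, or pass to $|x|$ inside the modular via \cite[Corollary 2.8]{FK} which reduces the modular to an integral of the singular value function) according to the threshold $1$: write $x=x_0+x_1$ where $x_0=x\,e_{[0,1)}(x)$ and $x_1=x\,e_{[1,\infty)}(x)$, so that $\mu(x_0)\le\chi\cdot\mu(x)$ lives below height $1$ and $x_1$ has all its spectrum $\ge 1$. The point of the threshold is that $M_{p,q}(t)\approx_{p,q} t^q$ for $t\le 1$ and $M_{p,q}(t)\approx_{p,q} t^p$ for $t\ge 1$; concretely $pt^q\le M_{p,q}(t)\le q t^q$ on $[0,1]$ and $p t^p\le M_{p,q}(t)\le q t^p$ on $[1,\infty)$. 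Hence, using the formula $\tau(\Phi(|y|))=\int_0^\infty \Phi(\mu(s,y))\,ds$, one gets $\tau_1(M_{p,q}(|x|))\approx_{p,q} \|x_0\|_q^q+\|x_1\|_p^p$, and similarly for $Wx$ after a corresponding splitting.

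Next I would estimate $\|Wx\|$ by the triangle-type bound. For the "small" part: since $W$ is an $L_q$-contraction, $\|Wx_0\|_q\le\|x_0\|_q$, so $\int_0^\infty (\mu(s,Wx_0))^q\,ds\le \|x_0\|_q^q$. For the "large" part: $\|Wx_1\|_p\le\|x_1\|_p$. But I need to feed these into $\tau_2(M_{p,q}(|Wx|))$, which again should be split at height $1$ for $Wx$ itself, not for $Wx_0$ and $Wx_1$ separately. So the cleaner route is: by \eqref{convex phi} (convexity of $M_{p,q}$, or rather $\tau_2(M_{p,q}(\tfrac12|Wx_0|+\tfrac12|Wx_1|))\le \tfrac12\tau_2(M_{p,q}(|Wx_0|))+\tfrac12\tau_2(M_{p,q}(|Wx_1|))$ after absorbing the factor $2$ into the $\Delta_2$-constant of $M_{p,q}$, which holds since $M_{p,q}$ is $q$-concave), it suffices to bound $\tau_2(M_{p,q}(|Wx_0|))$ and $\tau_2(M_{p,q}(|Wx_1|))$. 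Now for any $y$, $M_{p,q}(t)\le q(t^p+t^q)$ for all $t\ge 0$ (check: on $[0,1]$, $pt^q\le qt^q\le q(t^p+t^q)$; on $[1,\infty)$, $qt^p+p-q\le qt^p\le q(t^p+t^q)$), so $\tau_2(M_{p,q}(|y|))\le q(\|y\|_p^p+\|y\|_q^q)$. Thus $\tau_2(M_{p,q}(|Wx_0|))\le q(\|Wx_0\|_p^p+\|Wx_0\|_q^q)$ and likewise for $Wx_1$. Here the subtlety is that $W$ is only assumed bounded on $L_p$ and $L_q$, so I need $\|Wx_0\|_p$ in terms of $\|x_0\|_p$, which is fine ($\le\|x_0\|_p$), but then I must bound $\|x_0\|_p^p$ by $\|x_0\|_q^q$ (true since $\mu(x_0)\le 1$ and $p\le q$, giving $\mu(x_0)^p\ge\mu(x_0)^q$ pointwise — wait, that's the wrong direction), so instead I bound $\|x_0\|_p^p=\int_0^{a}\mu(s,x_0)^p\,ds$ where $a=\tau_1(e_{(0,1)}(x))$ may be infinite; this is the genuine obstacle.

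To handle that, I would not use the crude bound $M_{p,q}(t)\le q(t^p+t^q)$ globally, but keep the split honest: bound $\|Wx_0\|_q\le\|x_0\|_q$ and $\|Wx_1\|_p\le\|x_1\|_p$, and then observe $M_{p,q}(|Wx_0|)\le q|Wx_0|^q$ would require $\mu(Wx_0)\le 1$, which need not hold. The fix is to re-split $Wx_0$ at height $1$ and use $q$-concavity once more, i.e. an iterated/bootstrap application: $M_{p,q}(t)\le q\min\{t^p,t^q\}+q\max\{t^p,t^q\}$ is useless, but $M_{p,q}(t)\le C_{p,q}(t^q\wedge 1 \text{-part}) + \dots$. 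Cleaner: use that $W$ bounded on $L_p$ and $L_q$ implies, by real interpolation, $W$ bounded on $Z_{?}$; but the most economical argument is simply $\tau_2(M_{p,q}(|Wx_0|))\le q\,\tau_2(|Wx_0|^p + |Wx_0|^q)$ actually fails for the same reason — no. So the \textbf{honest} estimate is: $M_{p,q}(t)\le p\,t^q + q\, t^p$ for \emph{all} $t\ge 0$ (on $[0,1]$: $M_{p,q}(t)=pt^q\le pt^q+qt^p$; on $[1,\infty)$: $M_{p,q}(t)=qt^p+p-q\le qt^p\le pt^q+qt^p$), hence $\tau_2(M_{p,q}(|Wx_j|))\le p\|Wx_j\|_q^q+q\|Wx_j\|_p^p\le p\|x_j\|_q^q+q\|x_j\|_p^p$ for $j=0,1$. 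Finally $\|x_0\|_q^q=\int_0^\infty(\mu(s,x_0))^q\,ds$ and $\|x_0\|_p^p\le\|x_0\|_q^q$ because $\mu(s,x_0)\le 1$ forces $\mu(s,x_0)^p\le\mu(s,x_0)^q$... again backwards. The resolution I will adopt: on the support of $x_0$ we have $0\le\mu(x_0)\le 1$, so $\mu(x_0)^p\ge \mu(x_0)^q$ and thus $\|x_0\|_p\ge\|x_0\|_q$ — so $\|x_0\|_p$ can be $+\infty$ even when $M_{p,q}$-norm is finite. Therefore one must \emph{not} pass through $\|Wx_0\|_p$. Instead: $M_{p,q}(t)\le q\,t^q$ on $[0,1]$ and $\le q\,t^p$ on $[1,\infty)$; split $Wx_0 = (Wx_0)_0+(Wx_0)_1$ at height $1$; then $\tau_2(M_{p,q}(|Wx_0|))\approx_{p,q}\|(Wx_0)_0\|_q^q+\|(Wx_0)_1\|_p^p \le \|Wx_0\|_q^q + \|(Wx_0)_1\|_p^p$, and for the last term use $\|(Wx_0)_1\|_p^p\le\|(Wx_0)_1\|_q^q$ (now correct, since $\mu((Wx_0)_1)\ge1$ so $\mu^p\le\mu^q$) $\le\|Wx_0\|_q^q\le\|x_0\|_q^q\le\frac1p\tau_1(M_{p,q}(|x|))$; symmetrically $\tau_2(M_{p,q}(|Wx_1|))\lesssim_{p,q}\|x_1\|_p^p\lesssim_{p,q}\tau_1(M_{p,q}(|x|))$. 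Reassembling via \eqref{convex phi}/$\Delta_2$ gives the claim. The \textbf{main obstacle}, then, is precisely this bookkeeping around which $L_r$-norm controls which piece (small pieces live in $L_q$ locally but possibly not in $L_p$, and vice versa); the way through is to always split at height $1$ on the \emph{target} side before invoking contractivity, and to use the elementary pointwise estimates $p t^q\wedge(\cdot)\le M_{p,q}(t)\le q(t^p\vee t^q)$-type inequalities together with $\mu\le1\Rightarrow\mu^q\le\mu^p$ on one piece and $\mu\ge1\Rightarrow\mu^p\le\mu^q$ on the other.
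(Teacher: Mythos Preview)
Your final plan is correct and follows the same overall strategy as the paper: split $x$ at spectral height $1$ into a ``small'' piece $x_0$ (spectrum in $[0,1]$) and a ``large'' piece $x_1$ (spectrum in $(1,\infty)$), use the $q$-concavity/$\Delta_2$ property of $M_{p,q}$ together with \eqref{convex phi} to reduce to bounding $\tau_2(M_{p,q}(|Wx_0|))$ and $\tau_2(M_{p,q}(|Wx_1|))$ separately, and then feed these back into $\tau_1(M_{p,q}(|x|))$ via the exact formula for $M_{p,q}$ on the source side.

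The difference is in the execution of the middle step. You re-split $Wx_0$ and $Wx_1$ at height $1$ on the target side and then shuffle between $L_p$ and $L_q$ using $\mu\le 1\Rightarrow \mu^q\le\mu^p$ and $\mu\ge 1\Rightarrow \mu^p\le\mu^q$. The paper avoids this second splitting entirely by observing that the bounds $M_{p,q}(t)\le q\,t^q$ and $M_{p,q}(t)\le q\,t^p$ hold for \emph{all} $t>0$, not only on $[0,1]$ and $[1,\infty)$ respectively (check: for $t\ge 1$, $qt^p+p-q\le qt^p\le qt^q$; for $t\le 1$, $pt^q\le pt^p\le qt^p$). Hence one may write directly
\[
\tau_2\big(M_{p,q}(|Wx_0|)\big)\le q\,\|Wx_0\|_q^q\le q\,\|x_0\|_q^q,\qquad
\tau_2\big(M_{p,q}(|Wx_1|)\big)\le q\,\|Wx_1\|_p^p\le q\,\|x_1\|_p^p,
\]
and finish as you do. Your target-side re-split is, in effect, a case-by-case verification of exactly these global pointwise inequalities; recognising them up front removes the detours (and the several ``wait, wrong direction'' moments) from your argument.
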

\begin{proof} Split $x=x_1+x_2,$ where $x_1=xe_{[0,1]}(|x|)$ and $x_2=xe_{(1,\infty)}(|x|).$ By \cite[Theorem 3.3.3]{LSZ}, we have
$$|Wx|\prec\prec\mu(Wx_1)+\mu(Wx_2).$$
Let $M_0(t)=M_{p,q}(t^{1/q}),$ $t\geq0.$ By $q$-concavity, $M_0$ is a concave function. Using the basic properties of concave functions, it is clear that
$$M_{p,q}(t_1+t_2)\leq M_0\big(2^{q-1}(t_1^q+t_2^q)\big)\leq 2^{q-1}M_0(t_1^q+t_2^q)\leq 2^{q-1}\big(M_{p,q}(t_1)+M_{p,q}(t_2)\big)$$
for all $t_1,t_2>0.$ Therefore, by \eqref{majorization phi} we have
$$\tau_2\Big(M_{p,q}(|Wx|)\Big)\leq 2^{q-1}\Big(\tau_2\Big(M_{p,q}(|Wx_1|)\Big)+\tau_2\Big(M_{p,q}(|Wx_2|)\Big)\Big).$$
Since $M_{p,q}(t)\leq qt^p$ and $M_{p,q}(t)\leq qt^q$ for every $t>0,$ it follows that
$$\tau_2\Big(M_{p,q}(|Wx|)\Big)\leq q2^{q-1}\Big(\tau_2(|Wx_1|^q)+\tau_2(|Wx_2|^p)\Big).$$
Since $W:L_p(\mathcal{M}_1)\to L_p(\mathcal{M}_2)$ and $W:L_q(\mathcal{M}_1)\to L_q(\mathcal{M}_2)$ are contractions, it follows that
$$\tau_2\Big(M_{p,q}(|Wx|)\Big)\leq q2^{q-1}\Big(\tau_1(|x_1|^q)+\tau_1(|x_2|^p)\Big).$$

Since the spectrum of $|x_1|$ is in $[0,1]$ and since the spectrum of $x_2$ is in $[1,\infty),$ it follows that
$$M_{p,q}(|x_1|)=p|x_1|^q,\quad M_{p,q}(|x_2|)\geq p|x_2|^p.$$
Thus,
$$\tau_2(M_{p,q}(|Wx|))\leq\frac{q2^{q-1}}{p}\Big(\tau_1(M_{p,q}(|x_1|))+\tau_1(M_{p,q}(|x_2|)))\leq\frac{q2^q}{p}\tau_1(M_{p,q}(|x|)).$$
\end{proof}

The following lemma extends $\Phi$-moment interpolation theorem \cite[Theorem 2.1]{BC}; our methods are quite distinct from those used in \cite{BC}.

\begin{lem}\label{alpha interpolation lemma} Let $(\mathcal{M}_1,\tau_1)$ and $(\mathcal{M}_2,\tau_2)$ be semifinite von Neumann algebras and and let $\Phi$ be $p$-convex and $q$-concave Orlicz function, $1\leq p\leq q<\infty$. If $W:L_p(\mathcal{M}_1)\to L_p(\mathcal{M}_2)$ and $W:L_q(\mathcal{M}_1)\to L_q(\mathcal{M}_2),$ are bounded linear operators, then
$$\tau_2(\Phi(|Wx|))\lesssim_{\Phi}\tau_1(\Phi(|x|)),\quad x\in L_\Phi(\M_1).$$
\end{lem}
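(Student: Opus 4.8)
The plan is to reduce the case of a general $p$-convex, $q$-concave Orlicz function $\Phi$ to the model function $M_{p,q}$ handled by Lemma \ref{m simple est}. The key observation is that if $\Phi$ is $p$-convex and $q$-concave, then $\Phi$ is, up to equivalence of Orlicz functions (i.e.\ up to multiplicative constants in the argument and in the value), squeezed between suitable ``power-like'' functions; more precisely, one should be able to write $\Phi$ as a (generalized) superposition with $M_{p,q}$. Concretely, I would set $\Psi(t)=\Phi(t^{1/q})$, which is concave by $q$-concavity, and $\Theta(t)=\Phi(t^{1/p})$, which is convex by $p$-convexity, and exploit that on $[0,1]$ the function $\Phi$ behaves like a $q$-homogeneous object while on $[1,\infty)$ it behaves like a $p$-homogeneous object — exactly the dichotomy built into $M_{p,q}$.

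The cleanest route is the following. First I would note that the composition $\Phi\circ M_{p,q}^{-1}$ is an Orlicz function: since $M_{p,q}$ is $p$-convex and $q$-concave and $\Phi$ is $p$-convex and $q$-concave, the function $N:=\Phi\circ M_{p,q}^{-1}$ satisfies that $t\mapsto N(t^{1/p}\cdot\,\cdot)$-type convexity/concavity estimates degenerate to ordinary convexity of $N$ — indeed $N(t)=\Phi(M_{p,q}^{-1}(t))$ and one checks $t\mapsto N(t)$ is convex (because $\Phi$ is ``more convex'' than $M_{p,q}$ at the lower end) while $t\mapsto N(t)/t$-type quasi-concavity also holds, so that $N$ is submultiplicative up to a constant: $N(st)\lesssim_{\Phi} N(s)N(t)$ is too strong, but $N(s)\le \max\{s^{\alpha},s^{\beta}\}$-type bounds with $0<\alpha\le\beta<\infty$ do hold. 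Then, writing $y=M_{p,q}(|x|)$ appropriately, one transfers the statement. However, this route has annoying technicalities, so instead I would argue \emph{directly via submajorization}, mirroring the proof of Lemma \ref{m simple est}: split $x=x_1+x_2$ with $x_1=xe_{[0,1]}(|x|)$, $x_2=xe_{(1,\infty)}(|x|)$; use $|Wx|\prec\prec \mu(Wx_1)+\mu(Wx_2)$ from \cite[Theorem 3.3.3]{LSZ}; use that $\Phi(t^{1/q})$ concave gives $\Phi(t_1+t_2)\le 2^{q-1}(\Phi(t_1)+\Phi(t_2))$; apply \eqref{majorization phi} to reduce to estimating $\tau_2(\Phi(|Wx_1|))$ and $\tau_2(\Phi(|Wx_2|))$ separately; then bound $\Phi$ from above and below by multiples of $M_{p,q}$ on the relevant spectral ranges. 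Precisely, $q$-concavity yields $\Phi(t)\le \Phi(1)q\,t^{p}$ for $t\ge 1$ (comparing increments of the concave function $\Phi(t^{1/q})$... actually one uses: for $t\ge1$, $\Phi(t)/t^{p}$ is decreasing by $p$-convexity applied in reverse — more carefully, $p$-convexity gives $\Phi(t)/t^p$ increasing, so I would instead use $q$-concavity to get $\Phi(t)\le \Phi(1)t^q$ for $t\le 1$ and $\Phi(t)\ge\Phi(1)t^p$ for $t\ge1$, together with $p$-convexity giving $\Phi(t)\le\Phi(1)t^p$ for $t\le1$... ), and combine these four inequalities with $M_{p,q}(t)=pt^q$ on $[0,1]$, $M_{p,q}(t)\asymp t^p$ on $[1,\infty)$ to conclude $\tau_i(\Phi(|x_j|))\asymp_{\Phi}\tau_i(M_{p,q}(|x_j|))$.

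Putting it together: $\tau_2(\Phi(|Wx|))\lesssim_{\Phi}\tau_2(\Phi(|Wx_1|))+\tau_2(\Phi(|Wx_2|))\lesssim_{\Phi}\tau_2(M_{p,q}(|Wx_1|))+\tau_2(M_{p,q}(|Wx_2|))$, where for the first bracket one uses that $\mu(Wx_1)$ need not be $\le 1$, so one does not get $M_{p,q}(|Wx_1|)=p|Wx_1|^q$ for free; here one instead keeps $\Phi(|Wx_1|)\lesssim_\Phi |Wx_1|^q$ only where $\mu(Wx_1)\le1$ and $\lesssim_\Phi|Wx_1|^p$ where $\mu(Wx_1)\ge 1$, i.e.\ one uses $\Phi(t)\lesssim_\Phi M_{p,q}(t)$ for \emph{all} $t>0$, which does hold since both functions are $p$-convex and $q$-concave with the same normalization up to constants. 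Then apply Lemma \ref{m simple est} to $x_1$ and to $x_2$ (with the contractions obtained after rescaling $W$ by $\max\{\|W\|_{p\to p},\|W\|_{q\to q}\}$, absorbing the resulting constant into $\lesssim_{\Phi}$), getting $\lesssim_{\Phi}\tau_1(M_{p,q}(|x_1|))+\tau_1(M_{p,q}(|x_2|))$, and finally use $M_{p,q}(|x_j|)\lesssim_\Phi\Phi(|x_j|)$ on the appropriate spectral ranges plus $\tau_1(\Phi(|x_1|))+\tau_1(\Phi(|x_2|))\le 2\tau_1(\Phi(|x|))$ (since $x_1,x_2$ have orthogonal supports and $\Phi\ge0$, in fact equality holds). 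The main obstacle is the careful bookkeeping of the two-sided comparisons $\Phi\asymp M_{p,q}$ separately on $[0,1]$ and on $[1,\infty)$ — these are elementary consequences of $p$-convexity/$q$-concavity but must be stated cleanly so that the constants genuinely depend only on $\Phi$ (equivalently on $p,q$ and $\Phi(1)$); once that comparison lemma is isolated, the rest is a direct transcription of the proof of Lemma \ref{m simple est}.
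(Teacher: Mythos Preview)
Your main approach has a genuine gap: the two-sided comparison $\Phi(t)\asymp_{\Phi} M_{p,q}(t)$ that you assert ``does hold since both functions are $p$-convex and $q$-concave with the same normalization up to constants'' is simply false. Take $\Phi(t)=t^r$ with $p<r<q$; this is $p$-convex and $q$-concave, yet $\Phi(t)/M_{p,q}(t)=t^{r-q}/p\to\infty$ as $t\to 0^+$ and $\Phi(t)/M_{p,q}(t)\approx t^{r-p}/q\to\infty$ as $t\to\infty$. In fact $M_{p,q}$ is, up to constants, the \emph{minimal} $p$-convex $q$-concave Orlicz function normalized at $1$, so one only has $M_{p,q}\lesssim_{\Phi}\Phi$, never the reverse. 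Your own derivations confirm this: $p$-convexity gives $\Phi(t)\le\Phi(1)t^p$ for $t\le 1$, whereas $M_{p,q}(t)=pt^q\ll t^p$ there; $q$-concavity gives $\Phi(t)\le\Phi(1)t^q$ for $t\ge 1$, whereas $M_{p,q}(t)\approx qt^p\ll t^q$ there. Hence the step $\tau_2(\Phi(|Wx_j|))\lesssim_{\Phi}\tau_2(M_{p,q}(|Wx_j|))$ cannot be carried out, and the reduction to Lemma~\ref{m simple est} via a single pointwise comparison collapses.

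The paper's proof avoids this by an integral decomposition rather than a pointwise bound. One writes $\Phi(t)=t^p\phi(t^{q-p})$; the hypotheses on $\Phi$ force $\phi$ to be quasi-concave, hence equivalent to a genuinely concave $\phi_0$, which admits the representation $\phi_0(t)=\int_0^{\infty}\min\{t,\lambda\}\,d(-\phi_0'(\lambda))$. Substituting back and changing variables yields
\[
\Phi(t)\approx_{p,q}\int_0^{\infty} M_{p,q}(ts)\,d\nu(s)
\]
for a positive measure $\nu$. Now Lemma~\ref{m simple est} applies to each dilate $s|x|$ separately, and integrating in $s$ gives the claim. The point is that a general $p$-convex, $q$-concave $\Phi$ is not comparable to $M_{p,q}$, but it \emph{is} a superposition of its dilates; this is the missing idea in your sketch.
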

\begin{proof} Define the function $\phi:(0,\infty)\to(0,\infty)$ by setting $\Phi(t)=t^p\phi(t^{q-p}),$ $t>0.$ Since $\Phi$ is $p$-convex and $q$-concave, it follows that $\phi$ is quasi-concave (see \cite[Lemma 6]{AS2014}). By \cite[Theorem II.1.1]{KPS}, there exists an increasing concave function $\phi_0$ such that $\frac12\phi_0\leq\phi\leq\phi_0.$ For simplicity, suppose that $\phi_0(0)=0$ and $\phi_0(t)=o(t),$ $t\to\infty.$ By \cite[Lemma 5.4.3]{BergLofstrom}, we have
$$\phi_0(t)=\int_0^{\infty}\min\{t,\lambda\}d(-\phi_0'(\lambda)).$$
Note that $\min\{t^p,t^q\}\approx_{p,q} M_{p,q}(t)$ for all $t\geq0,$ we have
\begin{eqnarray*}
\Phi(t)&\approx& \int_0^{\infty}\min\{t^q,\lambda t^p\}d(-\phi_0'(\lambda))\stackrel{\lambda=s^{p-q}}{=}\int_0^{\infty}\min\{t^q,s^{p-q}t^p\}d(\phi_0'(s^{p-q}))
\\&=&\int_0^{\infty}\min\{(ts)^q,(ts)^p\}d\nu(s)\approx_{p,q}\int_0^{\infty}M_{p,q}(ts)d\nu(s),
\end{eqnarray*}
where the positive measure $\nu$ is defined by setting $d\nu(s)=s^{-q}d(\phi_0'(s^{p-q})).$ Hence,
$$\tau_1(\Phi(|x|))\approx\int_0^{\infty}\tau_1\Big(M_{p,q}(s|x|)\Big)d\nu(s).$$
Therefore, by Lemma \ref{m simple est}, we have
\begin{eqnarray*}
\tau_2(\Phi(|Wx|))\approx\int_0^{\infty}\tau_2\Big(M_{p,q}(s|Wx|)\Big)d\nu(s)
\leq\int_0^{\infty}\tau_1\Big(M_{p,q}(s|x|)\Big)d\nu(s)=\tau_1(\Phi(|x|)),
\end{eqnarray*} which is our desired inequality and the proof is complete.
\end{proof}

\begin{lem}\label{beta interpolation lemma} Let $(\mathcal{M}_1,\tau_1)$ and $(\mathcal{M}_2,\tau_2)$ be semifinite von Neumann algebras. Let $\Phi$ be a $p$-convex and $q$-concave Orlicz function, $1\leq p\leq q<\infty.$ If $W:L_{\infty}(\mathcal{M}_1)\to L_{\infty}(\mathcal{M}_2)$ and $W:L_p(\mathcal{M}_1)\to L_p(\mathcal{M}_2),$ are bounded linear maps, then
$$\tau_2(\Phi(|Wx|))\lesssim_{\Phi}\tau_1(\Phi(|x|)),\quad x\in L_\Phi(\M_1).$$
\end{lem}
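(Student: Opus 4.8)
The plan is to reduce the $\Phi$-moment estimate to a Hardy--Littlewood submajorization between $p$-th powers, exploiting that raising to the power $p$ turns the couple $(L_p,L_\infty)$ into $(L_1,L_\infty)$. First I would reduce to the case where $W$ is a contraction both on $L_p$ and on $L_\infty$: replace $W$ by $W/\max\{\|W\|_{L_p\to L_p},\|W\|_{L_\infty\to L_\infty},1\}$ and absorb the resulting scalar at the end via the $q$-concavity of $\Phi$, precisely as the proof of Lemma~\ref{alpha interpolation lemma} does through Lemma~\ref{m simple est}. By a routine approximation (truncating $|x|$ and its support, then using normality of the traces together with Fatou's lemma) one may further assume that $x$ is bounded with $\tau_1$-finite support, so that $|x|^p\in L_1(\M_1)$ and $|Wx|^p\in L_1(\M_2)$.

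The crucial step is the submajorization $\mu(Wx)^p\prec\prec c_p\,\mu(x)^p$, with $c_p$ depending only on $p$. Indeed, for any compatible linear map one always has $K(t,Wx;L_p,L_\infty)\le\max\{\|W\|_{L_p\to L_p},\|W\|_{L_\infty\to L_\infty}\}\cdot K(t,x;L_p,L_\infty)$, so under our normalization $K(t,Wx)\le K(t,x)$ for every $t>0$. Combining this with Holmstedt's formula \eqref{p+q} in the limiting case $q=\infty$ --- equivalently, with the well-known equivalence $K(t,z;L_p,L_\infty)\approx_p\big(\int_0^{t^p}\mu(s,z)^p\,ds\big)^{1/p}$ --- gives $\int_0^u\mu(s,Wx)^p\,ds\lesssim_p\int_0^u\mu(s,x)^p\,ds$ for all $u>0$, which is exactly the asserted submajorization (in the sense of Definition~\ref{HLM}).

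It remains to convert this into the $\Phi$-moment inequality. Put $\Psi(t):=\Phi(t^{1/p})$; by the definition of $p$-convexity $\Psi$ is an increasing convex Orlicz function, and the $q$-concavity of $\Phi$ means that $t\mapsto\Psi(t^{p/q})$ is concave and vanishes at $0$, hence $\Psi(\lambda s)\le\lambda^{q/p}\Psi(s)$ for all $\lambda\ge1$ and $s>0$. Using the identity $\tau(\Phi(|y|))=\int_0^\infty\Phi(\mu(t,y))\,dt=\int_0^\infty\Psi(\mu(t,y)^p)\,dt$ and applying \eqref{majorization phi} to the Orlicz function $\Psi$ and the submajorization just obtained, we get
$$\tau_2\big(\Phi(|Wx|)\big)=\int_0^\infty\Psi\big(\mu(t,Wx)^p\big)\,dt\le\int_0^\infty\Psi\big(c_p\,\mu(t,x)^p\big)\,dt\le c_p^{q/p}\int_0^\infty\Psi\big(\mu(t,x)^p\big)\,dt=c_p^{q/p}\,\tau_1\big(\Phi(|x|)\big).$$
Undoing the normalization of $W$ then yields the estimate with a constant depending only on $p,q$ (and $\|W\|$), which completes the argument.

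The step I expect to require the most care is the passage through the $(L_p,L_\infty)$ $K$-functional: one must make sure the Holmstedt equivalence $K(t,\cdot;L_p,L_\infty)^p\approx_p\int_0^{t^p}\mu(s,\cdot)^p\,ds$ holds with $p$-dependent-only constants in the noncommutative setting, and that the truncation argument legitimately transfers the inequality from bounded, finitely supported $x$ to arbitrary $x\in L_\Phi(\M_1)$. I note that there is an alternative, shorter route that avoids this entirely: since $W$ is bounded on $L_p$ and on $L_\infty$, classical interpolation shows it is bounded on $L_q$ as well, and then Lemma~\ref{alpha interpolation lemma} applies directly.
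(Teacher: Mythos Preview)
Your proposal is correct and follows essentially the same route as the paper: reduce to contractions, use the $K$-functional equivalence $K(t,z;L_p,L_\infty)\approx_p\big(\int_0^{t^p}\mu(s,z)^p\,ds\big)^{1/p}$ (the paper cites \cite[II.3.1, formula (3.5)]{KPS} for this) to deduce $|Wx|^p\prec\prec c_p|x|^p$, then apply \eqref{majorization phi} to the convex function $\Phi_0(t)=\Phi(t^{1/p})$ and absorb the constant via $q$-concavity. Your truncation/approximation step is not needed---the submajorization and \eqref{majorization phi} apply directly to $x\in L_\Phi(\M_1)\subset (L_p+L_\infty)(\M_1)$---and the paper simply writes $\tau_1(\Phi(c_p^{1/p}|x|))\lesssim_\Phi\tau_1(\Phi(|x|))$ in place of your explicit bound $\Psi(\lambda s)\le\lambda^{q/p}\Psi(s)$, which is the same observation.
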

\begin{proof}  Let us define Peetre $K-$functional by the usual formula.
$$K(t,z,L_p,L_{\infty})=\inf\{\|z_1\|_p+t\|z_2\|_{\infty}:\ z=z_1+z_2\}.$$
Without loss of generality, $W:L_{\infty}(\mathcal{M}_1)\to L_{\infty}(\mathcal{M}_2)$ and $W:L_p(\mathcal{M}_1)\to L_p(\mathcal{M}_2)$ are contractions. Therefore, we have
$$K(t,Wx,L_p(\mathcal{M}_2),L_{\infty}(\mathcal{M}_2))\leq K(t,x,L_p(\mathcal{M}_1),L_{\infty}(\mathcal{M}_1)),\quad t>0.$$
Taking into account that (see e.g. the computation leading to the formula (3.5) in II.3.1 in \cite{KPS})
$$K(t,z,L_p,L_{\infty})\approx_p\Big(\int_0^{t^p}\mu^p(s,z)ds\Big)^{\frac1p}, \quad z\in (L_p+L_\infty)(\M),$$
we infer that
$$|Wx|^p\prec\prec c_p|x|^p,\quad x\in (L_p+L_{\infty})(\M_1).$$
Let $\Phi_0(t)=\Phi(t^{\frac1p}),$ $t\geq0.$ It follows from the assumption of $p$-convexity of $\Phi$ that $\Phi_0$ is convex. Thus, by \eqref{majorization phi},
$$\tau_2(\Phi(|Wx|))=\tau_2(\Phi_0(|Wx|^p))\leq \tau_1(\Phi_0(c_p|x|^p))=\tau_1(\Phi(c_p^{\frac1p}|x|))\lesssim_\Phi\tau_1(\Phi(|x|)),$$
where the last inequality follows from the assumption of $q$-concavity of $\Phi.$
\end{proof}

\begin{lem}\label{fifth interpolation lemma} Let $(\mathcal{M},\tau)$ be a finite von Neumann algebra and let $(\mathcal{N},\nu)$ be a semifinite atomless one. Let $\Phi$ be a  $q$-concave Orlicz function with $2\leq q<\infty.$ If $V:(L_1+L_2)(\mathcal{N})\to L_1(\mathcal{M})$ and $V:(L_q\cap L_2)(\mathcal{N})\to L_q(\mathcal{M}),$ are bounded linear maps, then
$$\tau\big(\Phi(|Vx|)\big)\lesssim_{\Phi}\int_0^1\Phi(\mu(t,x))dt+\Phi(\|x\|_{L_1+L_2}).$$
\end{lem}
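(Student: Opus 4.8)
The plan is to transcribe the proof of Lemma~\ref{first interpolation lemma} into the language of $\Phi$-moments, using Lemma~\ref{alpha interpolation lemma} in place of the symmetric-space interpolation theorem and exploiting the $\Delta_2$-condition (recall that $q$-concavity implies $\Delta_2$) to replace the triangle inequality for norms by inequality \eqref{phi quasi-trangle}. First I would reduce to self-adjoint $x$: writing $x=\Re(x)+i\Im(x)$, inequality \eqref{phi quasi-trangle} gives $\tau(\Phi(|Vx|))\lesssim_\Phi\tau(\Phi(|V\Re(x)|))+\tau(\Phi(|V\Im(x)|))$, and since $\mu(2t,\Re(x))\le\mu(t,x)$ (likewise for $\Im(x)$) one has $\int_0^1\Phi(\mu(t,\Re(x)))\,dt\lesssim\int_0^1\Phi(\mu(t,x))\,dt$ and $\|\Re(x)\|_{L_1+L_2}\le\|x\|_{L_1+L_2}$, so the self-adjoint case suffices.

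For $x=x^*$ I would take the projection ${\bf q}$ constructed exactly as in the proof of Lemma~\ref{first interpolation lemma}: $\nu({\bf q})=1$, $x{\bf q}={\bf q}x$, $|x|{\bf q}\ge\mu(1,x){\bf q}$, so that $\mu({\bf q}x{\bf q})=\mu(x)\chi_{(0,1)}$ and $\mu(t,(1-{\bf q})x(1-{\bf q}))=\mu(t+1,x)$. Then $Vx=V({\bf q}x{\bf q})+V((1-{\bf q})x(1-{\bf q}))$ and, again by \eqref{phi quasi-trangle}, it suffices to bound the two summands separately. For the first one, ${\bf q}\N{\bf q}$ is finite of trace $1$, so $(L_1+L_2)({\bf q}\N{\bf q})=L_1({\bf q}\N{\bf q})$ and $(L_q\cap L_2)({\bf q}\N{\bf q})=L_q({\bf q}\N{\bf q})$ with equivalent norms, and $V$ restricts to bounded maps $L_1({\bf q}\N{\bf q})\to L_1(\M)$ and $L_q({\bf q}\N{\bf q})\to L_q(\M)$. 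Every Orlicz function is $1$-convex, so Lemma~\ref{alpha interpolation lemma} applies (with $p=1$) and yields $\tau(\Phi(|V({\bf q}x{\bf q})|))\lesssim_\Phi\tau_{{\bf q}\N{\bf q}}(\Phi(|{\bf q}x{\bf q}|))=\int_0^\infty\Phi(\mu(t,{\bf q}x{\bf q}))\,dt=\int_0^1\Phi(\mu(t,x))\,dt$.

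For the second summand, put $z=(1-{\bf q})x(1-{\bf q})$. From $\mu(t,z)=\mu(t+1,x)$, the Holmstedt formula \eqref{p+q}, and Young's inequality (applied to $\|z\|_q^q\le\mu(1,x)^{q-2}\int_1^\infty\mu(t,x)^2\,dt$ together with $\mu(1,x)\le\int_0^1\mu(t,x)\,dt$) one gets $\|z\|_{(L_q\cap L_2)(\N)}\lesssim\|x\|_{L_1+L_2}$, and trivially $\|z\|_{(L_1+L_2)(\N)}\le\|z\|_2\lesssim\|x\|_{L_1+L_2}$; hence by hypothesis $\max\{\|Vz\|_1,\|Vz\|_q\}\lesssim\|x\|_{L_1+L_2}$. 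The one genuinely new ingredient is then the elementary observation that a $q$-concave Orlicz function converts a joint $L_1/L_q$ bound into a $\Phi$-moment bound: for any $w$, with $a:=\max\{\|w\|_1,\|w\|_q\}$, splitting $\tau(\Phi(|w|))=\int_0^\infty\Phi(\mu(t,w))\,dt$ at the level $\{\mu(t,w)\le a\}$, using $\Phi(s)\le\frac sa\Phi(a)$ for $s\le a$ (convexity) on the lower part (bounded by $\frac{\Phi(a)}a\|w\|_1\le\Phi(a)$) and $\Phi(s)\le(\frac sa)^q\Phi(a)$ for $s\ge a$ (equivalently $s\mapsto\Phi(s)/s^q$ non-increasing, which is $q$-concavity) on the upper part (bounded by $\frac{\Phi(a)}{a^q}\|w\|_q^q\le\Phi(a)$), one obtains $\tau(\Phi(|w|))\le 2\Phi(a)$.

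Applying this with $w=Vz$ and then absorbing the multiplicative constant via $\Phi(Ct)\lesssim_\Phi\Phi(t)$ ($\Delta_2$), one gets $\tau(\Phi(|V((1-{\bf q})x(1-{\bf q}))|))\lesssim_\Phi\Phi(\|x\|_{L_1+L_2})$, and adding the two estimates finishes the proof. The main obstacle is exactly this tail term: in Lemma~\ref{first interpolation lemma} one simply invokes the continuous inclusion $L_q(\M)\subset E(\M)$, but modulars admit no such inclusion, so the $q$-concavity of $\Phi$ must be used directly through the level-set splitting above; as in Lemma~\ref{first interpolation lemma}, all implied constants are allowed to depend on the operator norms of $V$ in addition to $\Phi$.
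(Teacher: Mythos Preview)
Your proof is correct and follows the paper's argument for the reduction to self-adjoint $x$, the construction of ${\bf q}$, and the head term $V({\bf q}x{\bf q})$ via Lemma~\ref{alpha interpolation lemma}. The only difference lies in how you handle the tail term $V((1-{\bf q})x(1-{\bf q}))$.

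The paper's treatment of the tail is shorter: since $(\mathcal{M},\tau)$ is a \emph{finite} algebra, setting $\Phi_0(t)=\Phi(t^{1/q})$ (concave by $q$-concavity) and applying Jensen's inequality directly gives $\tau(\Phi(|w|))=\int_0^1\Phi_0(\mu(t,w)^q)\,dt\le\Phi_0\!\big(\int_0^1\mu(t,w)^q\,dt\big)=\Phi(\|w\|_q)$ for any $w\in L_q(\mathcal{M})$. Thus only the bound $\|V((1-{\bf q})x(1-{\bf q}))\|_q\lesssim\|x\|_{L_1+L_2}$ is needed, and this is exactly the modular analogue of the inclusion $L_q(\mathcal{M})\subset E(\mathcal{M})$ you thought was unavailable. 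Your level-set splitting, which uses both the $L_1$ and the $L_q$ bounds on $Vz$, is a valid alternative; it is slightly more work but has the virtue of not relying on $\tau(1)<\infty$, so it would survive if $\mathcal{M}$ were merely semifinite.
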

\begin{proof} Without loss of generality, let $x$ be self-adjoint and fix $x=x^*.$ Choose a projection ${\bf q}$ such that $x{\bf q}={\bf q}x,$ $\nu({\bf q})=1$ and $|x|{\bf q}\geq\mu(1,x){\bf q}.$ The operator $V:L_1({\bf q}\mathcal{N}{\bf q})\to L_1(\mathcal{M})$ is bounded and also $V:L_q({\bf q}\mathcal{N}{\bf q})\to L_q(\mathcal{M}).$ By Lemma \ref{alpha interpolation lemma}, we have
$$\tau\Big(\Phi(V({\bf q}x{\bf q}))\Big)\lesssim_{\Phi}\nu(\Phi({\bf q}x{\bf q}))=\int_0^1\Phi(\mu(t,x))dt.$$
Let $\Phi_0(t)=\Phi(t^{\frac1q})$, $t\geq0.$ It follows from the assumption of $q$-concavity of $\Phi$ that $\Phi_0$ is concave. For an arbitrary $z\in L_q(\M),$ by the Jensen inequality we have
$$\tau(\Phi(|z|))=\tau(\Phi_0(|z|^q))=\int_0^1\Phi_0(\mu(t,z^q))dt\leq\Phi_0\Big(\int_0^1\mu(t,z^q)dt\Big)= \Phi(\|z\|_q).$$
Therefore,
\begin{eqnarray*}
\tau\Big(\Phi\Big(V((1-{\bf q})x(1-{\bf q}))\Big)\Big)&\leq&\Phi\Big(\|V((1-{\bf q})x(1-{\bf q}))\|_q\Big)\\&\leq&\Phi\Big(\|(1-{\bf q})x(1-{\bf q})\|_{L_q\cap L_2}\Big)\\
&=&\Phi\Big(\|\mu(x)\chi_{(1,\infty)}\|_{L_q\cap L_2}\Big)\lesssim\Phi(\|x\|_{L_1+L_2}).
\end{eqnarray*}
Since the assumption of $q$-concavity of $\Phi$ implies $\Delta_2$-condition, it follows from \eqref{phi quasi-trangle} that
$$\tau\Big(\Phi(Vx)\Big)\lesssim_{\Phi}\tau\Big(\Phi(V({\bf q}x{\bf q}))\Big)+\tau\Big(\Phi(V((1-{\bf q})x(1-{\bf q})))\Big)$$
$$\lesssim_{\Phi}\int_0^1\Phi(\mu(t,x))dt+\Phi(\|x\|_{L_1+L_2}).$$
\end{proof}

\begin{lem}\label{sixth interpolation lemma} Let $(\mathcal{M},\tau)$ be a finite von Neumann algebra and let $(\mathcal{N},\nu)$ be a semifinite atomless one. Let $\Phi$ be a  $p$-convex and $q$-concave Orlicz function with $1<p\leq 2\leq q<\infty.$ If $V:L_p(\mathcal{M})\to(L_p+L_2)(\mathcal{N})$ and $V:L_{\infty}(\mathcal{M})\to(L_2\cap L_{\infty})(\mathcal{N}),$ are bounded linear maps, then
$$\int_0^1\Phi(\mu(t,Vx))dt+\Phi(\|Vx\|_{L_1+L_2})\lesssim_{\Phi}\tau\big(\Phi(|x|)\big),\quad x\in L_\Phi(\M).$$
\end{lem}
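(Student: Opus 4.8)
The plan is to mirror the proof of Lemma~\ref{second interpolation lemma}, but with the linear-interpolation input \cite[Theorem 3.2]{DDP-Int} replaced by the $\Phi$-moment interpolation Lemma~\ref{beta interpolation lemma}, and with the norm estimates of the "tail" replaced by $\Phi$-estimates. First I would reduce to the self-adjoint case: splitting $V=V_1+iV_2$ as at the start of Lemma~\ref{second interpolation lemma} we may assume $V$ preserves self-adjointness, and since $\mu(\Re x),\mu(\Im x)\prec\prec\mu(x)$, invoking \eqref{majorization phi} together with the $\Delta_2$-property of $\Phi$ (valid because $\Phi$ is $q$-concave) it suffices to treat $x=x^*$. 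I would also record at the outset that $p$-convexity of $\Phi$ and $\tau(1)=1$ force $L_\Phi(\mathcal{M})\subset L_p(\mathcal{M})$, so that $Vx$ is a well-defined element of $(L_p+L_2)(\mathcal{N})$.

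Fix $x=x^*$. Set ${\bf p}=e_{(\mu(1,Vx),\infty)}(|Vx|)$, so $\nu({\bf p})\le1$, and, using that $\mathcal{N}$ is atomless, choose a projection ${\bf q}$ with ${\bf p}\le{\bf q}\le e_{[\mu(1,Vx),\infty)}(|Vx|)$ and $\nu({\bf q})=1$ (the same construction as in Lemma~\ref{first interpolation lemma}; this particular choice, rather than an arbitrary ${\bf q}\ge{\bf p}$, is what makes the $L_2$-tail estimate below exact). Consider $W\colon z\mapsto{\bf q}\cdot Vz\cdot{\bf q}$. Since $\nu({\bf q})=1$ and $p\le2$, on the corner ${\bf q}\mathcal{N}{\bf q}$ the spaces $(L_p+L_2)$ and $L_p$ coincide and the spaces $(L_2\cap L_\infty)$ and $L_\infty$ coincide, so the hypotheses give that $W\colon L_p(\mathcal{M})\to L_p({\bf q}\mathcal{N}{\bf q})$ and $W\colon L_\infty(\mathcal{M})\to L_\infty({\bf q}\mathcal{N}{\bf q})$ are bounded. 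Applying Lemma~\ref{beta interpolation lemma} to $W$ and noting that $\mu({\bf q}Vx{\bf q})$ is supported on $(0,1)$, one gets $\int_0^1\Phi(\mu(t,{\bf q}Vx{\bf q}))\,dt=\nu(\Phi(|Wx|))\lesssim_\Phi\tau(\Phi(|x|))$.

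It then remains to handle $R=Vx-{\bf q}Vx{\bf q}=(1-{\bf q})Vx+{\bf q}Vx(1-{\bf q})$. Because ${\bf q}\le e_{[\mu(1,Vx),\infty)}(|Vx|)$, the projection $1-{\bf q}$ commutes with $|Vx|$, and a short computation (passing between left and right multiplication via $Vx=(Vx)^*$, and using Holmstedt's formula \eqref{p+q}) gives $\|(1-{\bf q})Vx\|_\infty\le\mu(1,Vx)\lesssim\|Vx\|_{L_p+L_2}\lesssim\|x\|_{L_p(\mathcal{M})}$ and $\|(1-{\bf q})Vx\|_2\le\|\mu(Vx)\chi_{(1,\infty)}\|_2\lesssim\|x\|_{L_p(\mathcal{M})}$, and likewise for ${\bf q}Vx(1-{\bf q})$; hence $\|R\|_\infty+\|R\|_2\lesssim\|x\|_{L_p(\mathcal{M})}$. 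Now I would assemble the two pieces. By submajorization $\mu(t,Vx)\le\mu(t/2,{\bf q}Vx{\bf q})+\mu(t/2,R)$, so convexity and $\Delta_2$ yield $\int_0^1\Phi(\mu(t,Vx))\,dt\lesssim_\Phi\int_0^1\Phi(\mu(t,{\bf q}Vx{\bf q}))\,dt+\Phi(\|R\|_\infty)$; similarly $\|Vx\|_{L_1+L_2}\le\|{\bf q}Vx{\bf q}\|_1+\|R\|_2$ gives $\Phi(\|Vx\|_{L_1+L_2})\lesssim_\Phi\Phi(\|{\bf q}Vx{\bf q}\|_1)+\Phi(\|R\|_2)$. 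Jensen's inequality for the probability measure $dt$ on $(0,1)$, applied to the convex $\Phi$, bounds $\Phi(\|{\bf q}Vx{\bf q}\|_1)=\Phi\big(\int_0^1\mu(t,{\bf q}Vx{\bf q})\,dt\big)\le\int_0^1\Phi(\mu(t,{\bf q}Vx{\bf q}))\,dt\lesssim_\Phi\tau(\Phi(|x|))$, and the same argument with the convex function $\Phi_1(s)=\Phi(s^{1/p})$ bounds $\Phi(\|x\|_{L_p(\mathcal{M})})=\Phi_1\big(\int_0^1\mu(t,x)^p\,dt\big)\le\int_0^1\Phi(\mu(t,x))\,dt=\tau(\Phi(|x|))$; via $\Delta_2$ this last bound absorbs both $\Phi(\|R\|_\infty)$ and $\Phi(\|R\|_2)$. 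Combining the estimates proves the inequality for $x=x^*$, and the reduction of the first paragraph then gives the general case.

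I expect the main obstacle to be precisely the tail term $R$: it is controlled only in $(L_2\cap L_\infty)(\mathcal{N})$ and need not lie in $L_\Phi(\mathcal{N})$, so its contribution cannot be dealt with by any $\Phi$-moment comparison and must instead be routed through $\Phi(\|R\|_\infty),\Phi(\|R\|_2)\lesssim_\Phi\Phi(\|x\|_{L_p(\mathcal{M})})\le\tau(\Phi(|x|))$, the final step crucially using $p$-convexity of $\Phi$ and $\tau(1)=1$; a related delicate point is that ${\bf q}$ must be pinned between ${\bf p}$ and $e_{[\mu(1,Vx),\infty)}(|Vx|)$ (possible only because $\mathcal{N}$ is atomless) so that $1-{\bf q}$ commutes with $|Vx|$ and the $L_2$-tail estimate comes out sharp.
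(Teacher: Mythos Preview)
Your proposal is correct and follows essentially the same skeleton as the paper's proof (self-adjoint reduction, corner projection ${\bf q}$, application of Lemma~\ref{beta interpolation lemma} for the head, Jensen with $\Phi_1(s)=\Phi(s^{1/p})$ for the $L_p$-estimate), but it carries unnecessary ballast.

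The point you seem to have missed is that once you choose ${\bf q}$ with ${\bf p}\le{\bf q}\le e_{[\mu(1,Vx),\infty)}(|Vx|)$, the projection ${\bf q}$ \emph{commutes} with $Vx$ (since $Vx$ is self-adjoint). Hence ${\bf q}Vx(1-{\bf q})=0$, and more importantly $\mu({\bf q}Vx{\bf q})=\mu(Vx)\chi_{(0,1)}$ exactly. Therefore
\[
\int_0^1\Phi(\mu(t,Vx))\,dt \;=\; \nu(\Phi(|Wx|))\;\lesssim_\Phi\;\tau(\Phi(|x|))
\]
directly from Lemma~\ref{beta interpolation lemma}, with no need to introduce $R$, no submajorization splitting $\mu(t,Vx)\le\mu(t/2,{\bf q}Vx{\bf q})+\mu(t/2,R)$, and no $\Phi(\|R\|_\infty)$ term. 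The paper's proof does exactly this.

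For the second summand the paper is also more direct than your decomposition $\|Vx\|_{L_1+L_2}\le\|{\bf q}Vx{\bf q}\|_1+\|R\|_2$: since $p\le2$ one has $\|Vx\|_{L_1+L_2}\lesssim\|Vx\|_{L_p+L_2}\lesssim\|x\|_p$ straight from the hypothesis on $V$, and then $\Phi(\|x\|_p)\le\tau(\Phi(|x|))$ by the Jensen argument you already isolated. Your route via $\Phi(\|{\bf q}Vx{\bf q}\|_1)$ and $\Phi(\|R\|_2)$ works, but is a detour.
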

\begin{proof} Without loss of generality, $V$ maps self-adjoint operators to self-adjoint ones. Fix $x=x^*$ and consider the spectral projection of $|Vx|$ associated with the interval $(\mu(1,Vx),\infty),$
$${\bf p}:=e_{(\mu(1,Vx),\infty)}(|Vx|).$$
It is clear that $\nu({\bf p})\leq1.$ Choose a projection ${\bf q}\geq{\bf p}.$ Without loss of generality, ${\bf q}$ commutes with $Vx$ and ${\bf q}\leq e_{[\mu(1,Vx),\infty)}(|Vx|).$ Consider the operator
$$W:z\to {\bf q}\cdot Vz\cdot {\bf q},\quad z\in (L_q+L_{\infty})(\mathcal{M}).$$
By the assumption, $W:L_p(\mathcal{M})\to L_p({\bf q}\mathcal{N}{\bf q})$ and $W:L_{\infty}(\mathcal{M})\to L_{\infty}({\bf q}\mathcal{N}{\bf q})$ are contractions. By Lemma \ref{beta interpolation lemma}, we have
$$\nu(\Phi(|Wz|))\lesssim_{\Phi}\tau(\Phi(|z|)).$$
Setting $z=x,$ we conclude that
$$\int_0^1\Phi(\mu(t,Vx))dt=\tau(\Phi(|Wx|))\lesssim_{\Phi}\tau(\Phi(|x|)).$$
Let $\Phi_0(t)=\Phi(t^{\frac1p})$, $t\geq0.$ It follows from the assumption of $p$-convexity of $\Phi$ that $\Phi_0$ is convex. For an arbitrary $z\in L_p(\mathcal{M}),$ by the Jensen inequality we have
$$\Phi(\|z\|_p)=\Phi_0\Big(\int_0^1\mu^p(t,z)dt\Big)\leq\int_0^1\Phi_0\Big(\mu^p(t,z)\Big)dt=\tau\big(\Phi(|z|)\big).$$
It follows from \eqref{p+q} and the assumption of $V$ that
$$\Phi(\|Vx\|_{L_1+L_2})\lesssim_\Phi\Phi(\|Vx\|_{L_p+L_2})\lesssim\Phi(\|x\|_p)\leq\tau\big(\Phi(|x|)\big).$$
This concludes the proof.
\end{proof}

We now provide the proof of Theorem \ref{modular thm}.
\begin{proof}[Proof of Theorem \ref{modular thm}] First, let $x_k,$ $k\geq0,$ be mean zero independent random variables. We first prove \eqref{modsym}. Recall that $T$ is the operator introduced in the proof of Theorem \ref{js thm}. By Lemma \ref{fifth interpolation lemma}, we have
$$\tau\big(\Phi(|Tx|)\big)\lesssim_{\Phi}\int_0^1\Phi(\mu(t,x))dt+\Phi(\|x\|_{L_1+L_2}).$$
Applying the latter inequality to the $x=\sum_{k\geq0}x_k\otimes e_k,$ we obtain
$$\tau\big(\Phi(|\sum_{k\geq0}x_k|)\big)\lesssim_{\Phi}\mathbb E(\Phi(\mu(X)\chi_{(0,1)}))+\Phi(\|X\|_{L_1+L_2}).$$
By Lemma \ref{sixth interpolation lemma}, we have
$$\int_0^1\Phi(\mu(t,T^*x))dt+\Phi(\|T^*x\|_{L_1+L_2})\lesssim_{\Phi}\tau\big(\Phi(|x|)\big).$$
Applying the latter inequality to the $x=\sum_{k\geq0}x_k,$ we obtain
$$\mathbb E(\Phi(\mu(X)\chi_{(0,1)}))+\Phi(\|X\|_{L_1+L_2})\lesssim_\Phi\tau\big(\Phi(|\sum_{k\geq0}x_k|)\big).$$
This concludes the proof of \eqref{modsym}.

We now prove \eqref{modpos}. It follows from $q$-concavity of $\Phi$ that
$$\tau\big(\Phi(\sum_{k\geq0}x_k)\big)\lesssim_{\Phi}\tau\Big(\Phi\big(|\sum_{k\geq0}x_k-\tau(x_k)|\big)\Big)+\Phi(\|X\|_{L_1}).$$
Setting $Y=\sum_{k\geq0}(x_k-\tau(x_k))\otimes e_k$ and applying \eqref{modsym}, we arrive at
$$\tau\big(\Phi(\sum_{k\geq0}x_k)\big)\lesssim_{\Phi}\mathbb E(\Phi(\mu(Y)\chi_{(0,1)}))+\Phi(\|Y\|_{L_1+L_2})+\Phi(\|X\|_{L_1}).$$
Clearly, $Y\prec\prec 2X$ and, therefore, $\mu(Y)\chi_{(0,1)}\prec\prec 2\mu(X)\chi_{(0,1)}.$ Again using $q$-concavity, we obtain
$$\tau\big(\Phi(\sum_{k\geq0}x_k)\big)\lesssim_{\Phi}\mathbb E(\Phi(\mu(X)\chi_{(0,1)}))+\Phi(\|X\|_1).$$

To see the converse inequality, we can assume without loss of generality that there are only finitely many summands. Let $X=\sum_{k\geq0}x_k\otimes e_k.$ Set $x_{1k}=x_ke_{(\mu(1,X),\infty)}(x_k).$ We have
$$\mu(X)\chi_{(0,1)}\leq\mu\Big(\sum_{k\geq0}x_{1k}\otimes e_k\Big)+\mu(1,X)\chi_{(0,1)}.$$
It follows from \cite[Lemma 3.3.7]{LSZ} that
$$\sum_{k\geq0}x_{1k}\otimes e_k\prec\prec \sum_{k\geq0}x_{1k}\leq \sum_{k\geq0}x_k.$$
It follows from the convexity of $\Phi$ and \eqref{majorization phi} that
$$(\tau\otimes\Sigma)\Big(\Phi(\sum_{k\geq0}x_{1k}\otimes e_k)\Big)\leq\tau\Big(\Phi(\sum_{k\geq0}x_{1k})\Big)\leq\tau\Big(\Phi(\sum_{k\geq0}x_k)\Big).$$
It follows from $q$-concavity of $\Phi$ that
$$\mathbb E(\Phi(\mu(X)\chi_{(0,1)}))\lesssim (\tau\otimes\Sigma)\Big(\Phi(\sum_{k\geq0}x_{1k}\otimes e_k)\Big)+\Phi(\mu(1,X))\leq \tau\Big(\Phi(\sum_{k\geq0}x_k)\Big)+\Phi(\|X\|_{L_1}).$$
By Jensen inequality, we have
$$\Phi(\|X\|_{L_1})=\Phi(\|\sum_{k\geq0}x_k\|_{L_1(\M)})\leq\tau\Big(\Phi(\sum_{k\geq0}x_k)\Big).$$
A combination of the $2$ preceding inequalities yields
$$\mathbb E(\Phi(\mu(X)\chi_{(0,1)}))+\Phi(\|X\|_1)\lesssim_{\Phi}\tau\big(\Phi(\sum_{k\geq0}x_k)\big).$$
We conclude the proof of \eqref{modpos}.

We now prove \eqref{modkh}. Let $S$ be the operator introduced in the proof of Theorem \ref{khinchine}. By Lemma \ref{alpha interpolation lemma}, we have
$$(\tau\otimes{\rm Tr})\big(\Phi(|Sz|)\big)\lesssim_{\Phi}\tau(\Phi(|z|)).$$
Applying the operator $S$ to the element $\sum_{k\geq0}x_k,$ we arrive at
$$\tau\Big(\Phi\Big(\big(\sum_{k\geq0}x_k^2\big)^{\frac12}\Big)\Big)\lesssim_\Phi\tau\big(\Phi(|\sum_{k\geq0}x_k|)\big).$$
By Lemma \ref{alpha interpolation lemma}, we have
$$\tau\big(\Phi(|S^*z|)\big)\lesssim_\Phi\tau(\Phi(|z|)).$$
Applying the operator $S^*$ to the element $\sum_{k\geq0}x_k\otimes e_k,$ we arrive at
$$\tau\big(\Phi(|\sum_{k\geq0}x_k|)\big)\lesssim_\Phi\tau\Big(\Phi\Big(\big(\sum_{k\geq0}x_k^2\big)^{\frac12}\Big)\Big).$$
This concludes the proof of \eqref{modkh}. Hence, the proof of Theorem \ref{modular thm} is complete.
\end{proof}

\begin{rmk} It follows from the proof of Theorem \ref{modular thm} \eqref{modkh} that if $\{x_k\}_{k\geq0}$ is an arbitrary sequence of mean zero independent elements (not necessarily self-adjoint) from $L_\Phi(\M)$ where $\Phi$ is a $p$-convex and $q$-concave Orlicz function with $1<p\leq q<\infty,$ then
$$\tau\Big(\Phi\Big(\big(\sum_{k\geq0}|x_k|^2\big)^{\frac12}\Big)\Big)\approx_\Phi\tau\big(\Phi(|\sum_{k\geq0}x_k|)\big)
\approx_\Phi\tau\Big(\Phi\Big(\big(\sum_{k\geq0}|x_k^*|^2\big)^{\frac12}\Big)\Big).$$
Indeed, the first equality is proved by verbatim repetition of the argument, while the second one is proved by substituting $x_k^*$ instead of $x_k.$
\end{rmk}

\end{document}